\documentclass[eqno] {amsart}
\usepackage{amscd}
\usepackage{amssymb}
\usepackage{cite}
\usepackage{amsmath}
\usepackage{a4wide}

\newtheorem{theorem}{Theorem}[section]
\newtheorem{lemma}[theorem]{Lemma}
\usepackage{comment}
\theoremstyle{theorem}
\newtheorem{definition}[theorem]{Definition}
\newtheorem{example}[theorem]{Example}
\newtheorem{question}[theorem]{Question}

\usepackage{bookmark}

\setcounter{MaxMatrixCols}{20}

\newtheorem{lem}[theorem]{\sc \bf Lemma}

\newtheorem{cor}[theorem]{\sc \bf Corollary}

\newtheorem{proposition}[theorem]{\sc \bf Proposition}

\newtheorem{rem}[theorem]{\sc \bf Remark}
\newtheorem{remark}[theorem]{\sc \bf Remark}
 \usepackage{color}
\usepackage{hyperref}					
\hypersetup{colorlinks,
	linkcolor=blue,%
	citecolor=blue}

\newcommand{\cA}{\mathcal{A}}

\newcommand{\cH}{\mathcal{H}}
\newcommand{\cF}{\mathcal{F}}

\newcommand{\cM}{\mathcal{M}}
\newcommand{\cP}{\mathcal{P}}

\newcommand{\norm}[1]{\left\lVert#1\right\rVert}

\numberwithin{equation}{section}

\begin{document}

\title[Hermitian operators and isometries on   symmetric operator spaces]{Hermitian operators and isometries on symmetric operator spaces}


\author{Jinghao Huang}
\address{Institute of Advanced Study of Mathematics, HIT, Harbin, 150001, China}
\curraddr{}
\email{jinghao.huang@hit.edu.cn}

\author{Fedor Sukochev}
\address{School of Mathematics and Statistics, University of New South Wales, Kensington, 2052, NSW, Australia }
\curraddr{}
\email{f.sukochev@unsw.edu.au}
\thanks{Jinghao Huang was supported by the NNSF of China.  Fedor Sukochev was  supported by  the Australian Research Council  (FL170100052)}

\date{}

\subjclass[2010]{47B15, 46B04, 46L52. }

\keywords{surjective isometry; hermitian operator;  semifinite von Neumann algebra; symmetric operator space.}

\dedicatory{ }

\begin{abstract}
Let $\cM$ be an  atomless semifinite von Neumann algebra (or an atomic  von Neumann algebra with all atoms having the same trace) acting on a (not necessarily separable)  Hilbert space $\cH$ equipped with a semifinite faithful normal trace $\tau$.
Let $E(\cM,\tau) $  be a  symmetric operator space affiliated with $ \cM $, whose norm is order continuous and  is not proportional to the Hilbertian norm $\norm{\cdot}_2$ on $L_2(\cM,\tau)$.
We obtain  a  general   description of all bounded hermitian operators on $E(\cM,\tau)$.
This is the first time that the description of  hermitian operators on  a symmetric operator space (even for a noncommutative $L_p$-space) is obtained  in the setting of    general (non-hyperfinite) von Neumann algebras.
 As an application, we resolve  a long-standing open problem concerning  the description of isometries raised in  the 1980s, which
generalizes and unifies  numerous  earlier  results.
\end{abstract}

\maketitle

\section{Introduction}
The main purpose of this paper is to answer the following   long-standing  open question concerning  isometries on a symmetric operator space  (see e.g. \cite{HSZ,CMS,AC,Sukochev})
  \begin{question}\label{mainq}
   If $E (0,\infty)$ is a separable symmetric function on $(0,\infty)$ and if $(\cM,\tau)$ is a semifinite von Neumann algebra (on a separable Hilbert space) with a semifinite faithful normal trace $\tau$,
   then how can one describe the family of    surjective  isometries on  the  symmetric operator space $E(\cM,\tau)$ associated with $E(0,\infty)$?
  \end{question}
This is one of the most fundamental questions in the theory of symmetric operator/function/sequence spaces, and it  has attracted a substantial  amount of interest.

The study of  the above question has a very long history, initiated by Stefan Banach~\cite{Banach},  who obtained the general form of isometries between  $L_p$-spaces on a finite measure space   in the 1930s.
This result was    extended by Lamperti~\cite{Lamperti} to certain Orlicz function spaces over  $\sigma$-finite measure spaces.
Representations of isometries between more general complex symmetric function spaces were later obtained by Lumer and by Zaidenberg~\cite{Zaidenberg,Z77,FJ} (see Arazy's paper \cite{Arazy85} for the case of complex sequence spaces).
Precisely, Zaidenberg showed that under mild conditions on the complex  function spaces $E_1(\Omega_1, \Sigma_1,\mu_1) $ and  $E_2(\Omega_2, \Sigma_2,\mu_2)$   over the
  atomless $\sigma$-finite measure spaces $ (\Omega_1, \Sigma_1,\mu_1) $ and  $ (\Omega_2, \Sigma_2,\mu_2)$,  any surjective  isometry $T$ between two complex symmetric function spaces $E_1(\Omega_1, \Sigma_1,\mu_1) $ and  $E_2(\Omega_2, \Sigma_2,\mu_2)$ must be of the   elementary form
\begin{align}\label{formf}
(Tf)(t)=h(t)(T_1f)(t), ~f\in E_1,
\end{align}where $T_1$ is the operator induced by a regular set isomorphism from $ \Omega_1 $ onto $\Omega_2$ and $h$ is a measurable function on $\Omega_2$\cite[Theorem 5.3.5]{FJ} (see also \cite{Z77,Zaidenberg}).
Let $(\Gamma, \Sigma, \mu)$ be a discrete measure space on a set $\Gamma$ with $\mu(\{\gamma\})=1$ for every $\gamma \in \Gamma$.
We denote by $\ell_p(\Gamma)$, $1\le p\le \infty$, the $L_p$-space on $(\Gamma, \Sigma, \mu)$\cite[p.xi]{LT1}.
Whereas $\ell_p(\Gamma)$ is a well-studied object (see e.g. \cite{LT1,SSU,HR98} and references therein) and the description  of surjective isometries of $\ell_p(\Gamma)$ follows yet from \cite{Yeadon,Sherman}, the case of arbitrary  symmetric spaces $E(\Gamma)$ for uncountable $\Gamma$   remained untreated.
The description of isometries of these classical Banach spaces is a simple corollary of our general result, Theorem~\ref{Theorem1} below.
We also note that
the study of isometries on real symmetric function spaces and those on complex symmetric function spaces have substantial differences (see e.g. the works  of
 Braveman and Semenov \cite{BS74,BS75},   Jaminson, Kaminska and Lin\cite{JKL}, and
 Kalton and  Randrianantoanina\cite{Kalton_R,Kalton_R93,R,R2}).
Throughout this paper, unless stated otherwise,  we only consider complex Banach spaces and surjective linear isometries.

A noncommutative version of   Banach's description on isometries between $L_p$-spaces\cite{Banach} was obtained by
 Kadison \cite{K51} in the 1950s, who showed that  a surjective isometry between two von Neumann algebras can be written as
 a Jordan $*$-isomorphism  followed by a multiplication of    a unitary operator.
After the non-commutative $L_p$-spaces were introduced by Dixmier\cite{D50} and Segal\cite{Se} in the 1950s,
the study of $L_p$-isometries was  conducted by Broise~\cite{Broise}, Russo \cite{Russo}, Arazy \cite{Arazy}, Tam \cite{Tam},  etc.
A complete description (for the semifinite case) was  obtained in 1981 by Yeadon \cite{Yeadon}, who proved that
every  isometry  $T:L_p(\cM_1,\tau_1) \stackrel{into}{\longrightarrow} L_p(\cM_2,\tau_2)$, $1\le p\ne 2<\infty$,
has the form
\begin{align}\label{formp}
T(x)=uB J(x), ~x\in \cM_1\cap L_p(\cM_1,\tau_2),
\end{align}
where $u$ is  a partial isometry  in $\cM_2$, $B$ is a positive self-adjoint operator affiliated with $\cM_2$ and $J$ is  a Jordan $*$-isomorphism from $\cM_1$ onto a weakly closed $*$-subalgebra of $\cM_2$ (see \cite{Sherman,JS,JRS, Watanabe} for the case when $\cM_1,\cM_2$ are of type $III$).

The isometries on general symmetric operator spaces on semifinite von Neumann algebras  have been widely   studied    since the notion of symmetric operator spaces was introduced in the 1970s (see e.g. \cite{O,O2,DDP,S87,Kalton_S,DDP93} and references therein).
The question posed at the beginning of the paper indeed asks whether  these  isometries $T$
have a natural description as in the cases of symmetric function spaces and noncommutative $L_p$-spaces (see \eqref{formf} and \eqref{formp}).
 One of the most important developments  in this area is due to the work of  Sourour\cite{Sourour}, who described isometries on separable symmetric  operator ideals, that is, when $\cM$ is the $*$-algebra $B(\cH)$ of all bounded linear operator on a separable Hilbert space $\cH$.
 Adopting Sourour's techniques, the second author obtained the description of isometries on separable symmetric operator spaces affiliated with hyperfinite type $II$ factors\cite{Sukochev}.
 However,  the approach used in \cite{Sourour} strongly relies  on the matrix representation of  compact operators on a separable Hilbert space $ \cH$, which is not applicable for symmetric operator spaces affiliated with general semifinite von Neumann algebras.
In the latter case, only partial results have been obtained.
For example, the general form of isometries of Lorentz spaces on a finite von Neumann algebra was obtained in \cite{CMS} (see also \cite{MS2}).
Under additional  conditions on the isometries (e.g., disjointness-preserving, order-preserving, etc.), similar descriptions  can be found in \cite{MS, JC, JC2, SV,  CMS,HSZ,FJ2,Katavolos2,MZ,R3,Abramovich}, which provide partial  answers to the question posed at the outset of this paper.

The following theorem answers Question \ref{mainq} in its  full generality..
\begin{theorem}\label{Theorem1}
Let $\cM_1$ and $\cM_2$ be  atomless von Neumann algebras (or  atomic von Neumann algebras whose atoms all  have the same trace) equipped with semifinite faithful normal traces $\tau_1$ and $\tau_2$, respectively.
Let $E(\cM_1,\tau_1)$ and $F(\cM_2,\tau_2)$ be two  symmetric operator spaces whose norms  are order continuous and are not proportional to  $\norm{\cdot}_2$.
If $T:E(\cM_1,\tau_1)\to F(\cM_2,\tau_2)$ is a surjective isometry, then there exist
two nets of elements $ A_i \in  F(\cM_2 ,\tau_2),~   i\in I $,  disjointly supported from the right and $ B_i \in F(\cM_2 ,\tau_2), ~i\in I$,    disjointly supported from the  left,   a  surjective  Jordan $*$-isomorphism $J:\cM_1\to \cM_2$  and a central projection $z\in \cM_2$ such that
$$T(x) =  \norm{\cdot}_F-  \sum _{i\in I}  J(x)A_i z + B_i J(x) ({\bf 1}-z) , ~\forall x\in E(\cM_1,\tau_1)\cap \cM_1,$$
 where   the series is taken as the limit of  all finite partial sums.
 In particular, if
$\cM$ is $\sigma$-finite, then the nets $\{A_i\}$ and $\{B_i\}$ are countable.
If the trace $\tau$ is finite, then there exist   elements $A,B\in F(\cM_2,\tau_2)$ such that
$$T(x) =    J(x)A  z + B  J(x) ({\bf 1}-z) , ~\forall x\in E(\cM_1,\tau_1)\cap \cM_1. $$
\end{theorem}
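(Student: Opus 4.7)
The plan is to reduce the problem to the paper's main description of bounded Hermitian operators on symmetric operator spaces. For every self-adjoint $h\in \cM_1$, the left- and right-multiplication operators $L_h,R_h$ are Hermitian on $E(\cM_1,\tau_1)$, because their one-parameter groups $e^{itL_h}$, $e^{itR_h}$ act on $E(\cM_1,\tau_1)$ by multiplication by the unitaries $e^{\pm ith}\in\cM_1$ and are therefore isometric. Since a surjective isometry $T:E(\cM_1,\tau_1)\to F(\cM_2,\tau_2)$ conjugates Hermitians to Hermitians via $H\mapsto THT^{-1}$, each $TL_hT^{-1}$ is a Hermitian operator on $F(\cM_2,\tau_2)$ to which the general description now applies. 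The whole strategy consists in reading off the algebraic structure of $T$ from this intertwining.

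Applying the Hermitian description to the family $\{TL_hT^{-1}:h=h^*\in\cM_1\}$ should yield a central projection $z\in \cM_2$ together with real-linear maps $\ap,\bt$ from self-adjoint elements of $\cM_1$ into $\cM_2$ such that
\begin{equation*}
T(hx)=\ap(h)T(x)z+T(x)\bt(h)(\mathbf{1}-z),\qquad x\in E(\cM_1,\tau_1)\cap \cM_1,
\end{equation*}
where $z$ separates the ``left-dominant'' from the ``right-dominant'' behaviour of $T$. Iterating this identity on products $h_1h_2$ and polarising shows that $\ap$ extends to a $*$-homomorphism on the $z$-side and $\bt$ to a $*$-antihomomorphism on the $(\mathbf{1}-z)$-side; assembling them gives a Jordan $*$-homomorphism $J:\cM_1\to\cM_2$, with surjectivity obtained by running the same analysis on $T^{-1}$. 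In the finite-trace case one then sets $A:=T(\mathbf{1})z$ and $B:=T(\mathbf{1})(\mathbf{1}-z)$ and reads off the formula $T(x)=J(x)Az+BJ(x)(\mathbf{1}-z)$ directly from the functional equation. In the general semifinite case, $T(\mathbf{1})$ is replaced by a net of disjointly supported truncations $\{A_i\},\{B_i\}\subset F(\cM_2,\tau_2)$ indexed by a directed family of $\tau_2$-finite projections, so that the corresponding series converges in the order-continuous norm $\norm{\cdot}_F$.

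The principal obstacle is the non-$\sigma$-finite setting: the symbol $T(\mathbf{1})$ has no intrinsic meaning since $\mathbf{1}\notin E(\cM_1,\tau_1)$, so the whole right-hand side must be constructed as a net of pieces, with the disjoint left/right support conditions on $\{A_i\}$ and $\{B_i\}$ being precisely what allows the sum to converge unconditionally in $F(\cM_2,\tau_2)$. Two further delicate points are the passage from the Hermitian intertwining on the dense subspace $E(\cM_1,\tau_1)\cap \cM_1$ to a global description of $T$ on all of $E(\cM_1,\tau_1)$ (which relies essentially on the order continuity of both norms, and on the standing assumption that the norms are not proportional to $\norm{\cdot}_2$, as the latter case admits extra ``rotational'' Hermitians that destroy the rigidity), and the verification that the central decomposition $z\oplus(\mathbf{1}-z)$ in $\cM_2$ matches the Jordan splitting of $J$ into its $*$-homomorphic and $*$-antihomomorphic parts without cross-terms.
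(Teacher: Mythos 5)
Your overall strategy coincides with the paper's: conjugate the Hermitian multiplication operators $L_h$ by $T$, apply the general description of Hermitian operators on $F(\cM_2,\tau_2)$, extract a Jordan $*$-isomorphism from the resulting intertwining identity, obtain surjectivity by running the same analysis on $T^{-1}$, and assemble the final formula from a net of $\tau$-finite projections. However, there is a genuine gap at the step you phrase as ``should yield a central projection $z$''. The Hermitian description (Theorem \ref{th:her}) only gives $TL_hT^{-1}(y)=a_h y+y b_h$ for some self-adjoint $a_h,b_h\in\cM_2$, with no centrality and no canonical left/right splitting; the pair $(a_h,b_h)$ is not even unique, since central summands can be shifted between them. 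Two further inputs are needed to reach your functional equation. First, one must exploit that $(TL_hT^{-1})^2=TL_{h^2}T^{-1}$ is again Hermitian: comparing $a_h^2y+2a_hyb_h+yb_h^2$ with an expression of the form $cy+yd$ and invoking a structure theorem for the identity $ey+yf=ayb$ in a general von Neumann algebra (the content of the paper's Appendix, Theorem \ref{central decomposition th}) produces, for each $h$ separately, a central projection $z_h$ under which $a_h$ lives and whose complement carries $b_h$, modulo central summands (Corollary \ref{4.2} and Remark \ref{rem42}). Second, one must show these projections can be chosen independently of $h$; the paper does this by proving $z(J_1(b))\wedge z(J_2(a))=0$ for all self-adjoint $a,b$ (applying the Appendix theorem once more to $TL_{a+b}T^{-1}$) and then setting $z:=\bigvee_{b}z(J_1(b))$. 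Without these two steps the identity $T(hx)=\ap(h)T(x)z+T(x)\bt(h)(\mathbf{1}-z)$ with a single $z$ valid for all $h$ is unjustified, and this is precisely the point where the general (non-factor) case departs from the arguments of Sourour and Sukochev that your sketch implicitly follows; the remainder of your outline (Jordan structure by polarisation, surjectivity via $T^{-1}$, the finite-trace specialisation $A=T(\mathbf{1})z$, $B=T(\mathbf{1})(\mathbf{1}-z)$, and the net of truncations in the semifinite case) matches the paper's Steps 2--4.
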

This extends numerous earlier results in this topic 
 (see e.g. \cite{Arazy85,JC,JC2, Sourour,Sukochev,CMS,Russo,Lumer, Sherman,Watanabe,Yeadon,MS,MS2,Medzhitov}),  and
Theorem \ref{Theorem1} yields the first description of surjective isometries on symmetric operator spaces associated with non-hyperfinite algebras.
On the other hand, we show that if $\cM$ has atoms whose traces are different, then
there exists a symmetric space $E(\cM,\tau)$ (whose norm is  not proportional to $\norm{\cdot}_2$) and  an   isometry  on $E(\cM,\tau)$
which is  not in the form of \eqref{formf} (see Example~\ref{exam}). 
This  demonstrates that the assumption imposed on  the von Neumann algebra is sharp.

Recall that the notion of hermitian operators on a Banach space was formulated by Lumer\cite{Lumer} in his seminal paper in the 1960s,
for the purpose of extending Hilbert space type arguments to Banach spaces.
This notion plays
  an important role in  different fields  such as
   operator theory on Banach spaces, matrix theory, optimal control theory and computer science
   (see e.g. \cite{FJ,Giles,Lumer,Berkson,FJ,FJ2,XY,Sinclair} and references therein).

The main method used in this paper  for the description of isometries
is to establish and employ the general description of hermitian operators on the symmetric operator spaces $E(\cM,\tau)$.
The following result is rather surprising as it shows that the stock of hermitian operators does not depend on the symmetric space $E(\cM,\tau)$, and it is fully determined by the algebra $\cM$.

\begin{theorem} \label{tH2}
Let $E(\cM,\tau)$ be a symmetric  space on an atomless semifinite von Neumann algebra (or an atomic von Neumann
algebra with all atoms having the same trace) $\cM$ equipped
with a semifinite faithful normal trace $\tau$.
Assume that $\norm{\cdot}_E$ is order continuous and is not proportional to $\norm{\cdot}_2$.
Then, a bounded operator $T$ on $E(\cM,\tau)$ is a hermitian operator on $E(\cM,\tau)$ if and only if  there exist   self-adjoint operators $a$ and $b$ in $\cM$ such that
$$Tx=ax+xb,~x\in E(\cM,\tau).$$
In particular, $T$ can be extended to a bounded hermitian operator on the von Neumann algebra $\cM$.
\end{theorem}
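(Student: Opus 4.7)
The ``if'' direction is immediate. For self-adjoint $a,b\in\cM$, the operators $L_a\colon x\mapsto ax$ and $R_b\colon x\mapsto xb$ are bounded on $E(\cM,\tau)$ (with operator norm at most $\norm{a}_\cM$ and $\norm{b}_\cM$ respectively) and commute, so
\[
\exp\bigl(it(L_a+R_b)\bigr)(x)=e^{ita}\,x\,e^{itb},\qquad t\in\bR.
\]
Since $e^{ita}$ and $e^{itb}$ are unitaries in $\cM$ and the symmetric norm $\norm{\cdot}_E$ is invariant under multiplication by unitaries from either side, $\exp(it(L_a+R_b))$ is a surjective isometry of $E(\cM,\tau)$ for every $t\in\bR$; hence $T:=L_a+R_b$ is hermitian. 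The same computation, run with $\norm{\cdot}_\cM$ in place of $\norm{\cdot}_E$, shows that $T$ is also hermitian on $\cM$ itself.

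For the converse, let $T$ be a bounded hermitian operator on $E(\cM,\tau)$, so that $U_t:=\exp(itT)$ is a strongly continuous one-parameter group of surjective isometries. The plan is to manufacture self-adjoint candidates $a,b\in\cM$ from the local action of $T$ on projections and then to verify $Tx=ax+xb$ on a dense subspace. For a finite-trace projection $p\in\cM\cap E(\cM,\tau)$, I would invoke Lumer's criterion (the numerical range of $T$ is real) together with the conjugation action on $Tp$ by the many partial isometries in $\cM$ that fix or permute the corners determined by $p$. This should force an expression of the form $Tp=a_p\,p+p\,b_p$ with self-adjoint $a_p,b_p\in\cM$ depending on $p$; the off-diagonal corners $(\mathbf{1}-p)(Tp)p$ and $p(Tp)(\mathbf{1}-p)$ determine the parts of $a_p,b_p$ outside $p\cM p$, while reality of the numerical range controls the diagonal corners.

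The next step is to vary $p$ over an upward-directed net of finite-trace projections exhausting $\mathbf{1}_\cM$ (available by semifiniteness of $\tau$ combined with the hypothesis that $\cM$ is atomless or uniformly atomic), and to check that the locally defined pairs $(a_p,b_p)$ are compatible on overlaps. This produces self-adjoint operators $a,b$ measurable with respect to $\tau$ such that $Tx=ax+xb$ for every $x$ in the linear span of projections in $\cM\cap E(\cM,\tau)$. The uniform bound $\norm{T}_{E\to E}$ then forces $a,b\in\cM$: any spectral projection of $\abs{a}$ or $\abs{b}$ above a sufficiently large threshold would, when fed to a suitable test element, violate the operator-norm bound. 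Finally, order continuity of $\norm{\cdot}_E$ gives density of $\cM\cap E(\cM,\tau)$ in $E(\cM,\tau)$, so the identity $Tx=ax+xb$ extends by continuity to all of $E(\cM,\tau)$.

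The principal obstacle is the corner analysis on projections: extracting a two-sided multiplication structure from mere reality of the numerical range, without the matrix-unit scaffolding that was available in the hyperfinite cases of \cite{Sourour,Sukochev}. The hypothesis that $\norm{\cdot}_E$ is not proportional to $\norm{\cdot}_2$ is essential at this point, since on $L_2(\cM,\tau)$ every bounded operator that is self-adjoint in the Hilbert-space sense is hermitian in the Banach-space sense, and that class is vastly larger than $\{L_a+R_b:a=a^*,\ b=b^*\in\cM\}$; the assumption excludes this degenerate situation. A secondary technical point is that in the non-$\sigma$-finite case one cannot reduce to countable nets, so the globalization must be carried out genuinely net-wise rather than sequentially.
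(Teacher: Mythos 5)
Your ``if'' direction is fine and is essentially the standard argument the paper alludes to. Your converse, however, has a genuine gap at the globalization step. The local corner analysis you sketch (forcing $Tp=a_pp+pb_p$ for $\tau$-finite projections $p$ from reality of the numerical range, using the non-proportionality to $\norm{\cdot}_2$) is indeed what the paper establishes in Lemma~\ref{lemma:orthogonal}, Corollary~\ref{cor:disjoint} and Lemma~\ref{lemmaptop} --- though note that the actual mechanism there is an Arazy-style computation with phases $e^{i\theta_k}$ and a linearly independent pair $x,y$ with $\norm{x}_E=\norm{y}_{E^*}=\tau(xy^*)=1$, which is where the hypothesis $\norm{\cdot}_E\not\propto\norm{\cdot}_2$ actually enters; ``conjugation by partial isometries'' alone does not produce it. The real problem is your next step: ``check that the locally defined pairs $(a_p,b_p)$ are compatible on overlaps.'' The decomposition $Tp=a_pp+pb_p$ is \emph{not} unique --- one can shift central elements between $a_p$ and $b_p$, and on the diagonal corner only the sum $p(a_p+b_p)p$ is determined by $T$ --- so there is no canonical choice to glue, and naive compatibility fails. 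Resolving this is precisely the hard content of the theorem, and your proposal does not supply an idea for it.

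The paper circumvents the gluing problem by an entirely different route: it first shows (Lemma~\ref{lemmaMtoM}) that $T$ restricted to $\cF(\tau)$ is bounded in $\norm{\cdot}_\infty$ into $C_0(\cM,\tau)$, using the generalized Gleason theorem to extend the map from projections linearly (with a separate, delicate analysis of the type $I_2$ direct summand, where Gleason's theorem is unavailable --- an obstruction your projection-based scheme would also meet); it then shows $T$ extends to a \emph{hermitian} operator on the $C^*$-algebra $C_0(\cM,\tau)$ (Propositions~\ref{redu} and~\ref{prop:general:H}), and finally invokes the Paterson--Sinclair structure theorem for hermitian operators on $C^*$-algebras together with the spatiality of derivations on $C_0(\cM,\tau)$ (Lemma~\ref{derivation}) to produce the global self-adjoint $a,b\in\cM$ in one stroke. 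Your final steps (forcing $a,b\in\cM$ via the operator-norm bound, and extending by order continuity) are consistent with the paper, but they rest on the existence of global $a,b$, which your argument does not actually construct.
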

This idea to employ hermitian operators for description of isometries  lurks in the background of Lumer's  description
of isometries on Orlicz spaces\cite{FJ},
however, the study of hermitian operators on noncommutative spaces is  substantially more difficult than  that of function spaces,
and
 descriptions of hermitian operators are known only for  very few   operator spaces.
For example,
Sinclair obtained the general form of hermitian operators  on a $C^*$-algebra\cite{Sinclair} using earlier results on derivations on operator algebras;
 Sourour obtained the general form of hermtitian operator on separable operator ideal of $B(\cH)$ when $\cH$ is separable\cite{Sourour};
 and the case for symmetric operator spaces on   hyperfinite type $II$ factors  was obtain by the second author\cite{Sukochev} by adopting Sourour's approach.
 For more general von Neumann algebras, the   form of a hermitian operator on a symmetric space (even on  noncommutative
 $L_p$-spaces,  see \cite[Theorem 4]{Sourour77} and \cite[Theorem 4.2]{Sourour78} for partial results) was unknown.
Theorem \ref{tH2} yields  the complete description of  hermitian operators on a symmetric operator space  having order continuous norm by using a different approach to those  in \cite{Sourour,Sukochev}.

 The main ingredient of the proof of Theorem \ref{tH2} is the following surprising observation:
any bounded hermitian operator on $E(\cM,\tau)$ can be ``reduced'' to a bounded hermitian operator on   the so-called $\tau$-compact ideal $C_0(\cM,\tau)$ (which is a $C^*$-algebra) and therefore, it can be written as  the sum of a left-multiplication by a self-adjoint operator in $\cM$ and
a right-multiplication by
a self-adjoint operator in $\cM$\cite{Sinclair}.
 Having  such a description
at hand, we are able to describe isometries on symmetric operator spaces affiliated with   $\cM$ and infer Theorem \ref{Theorem1}.
However,
 the structure of a bounded hermitian operator on  a von Neumann algebra
 is more complicated than that of    a factor.
 This inference is, however, far from straightforward.
As pointed out in   \cite[p.825]{JC}, constructing a suitable Jordan $*$-isomorphism from an isometry (even if this isometry is positive and finiteness preserving\cite{JC}) is always `problematic'.
A simple adaptation of proofs in \cite{Sourour,Sukochev} does not  yield   Theorem \ref{Theorem1}.
Many   new techniques are required in the proof of Theorem \ref{Theorem1},
 which are of interest on their  own rights  and has potential  usage in   the future   study  of hermitian operators, Jordan $*$-isomorphisms and isometries of vector-valued spaces.

Finally, as an application of Theorem  \ref{Theorem1},
we
consider a variant of  Pe{\l}czy\'{n}ski's problem on the uniqueness of symmetric structure of operator ideals
for  symmetric structure of $E(\cM,\tau)$ affiliated with a $II_1$-factor,
 which establishes a noncommutative version of  a  result by Abramovich and Zaidenberg for the uniqueness of symmetric structure of $L_p(0,1)$ \cite[Theorem 1]{AZ} and its generalizations
due to  Zaidenberg \cite{Z77}, and    Kalton and  Randrianantoanina  \cite{Kalton_R,R}.

The authors would like to thank Professor  Aleksey Ber for helpful discussions, and thank Professors Evgueni  Semenov and Mikhail  Zaidenberg for their interest and points to   the existing literature  and related problems in this field.
We also thank  Professor Dmitriy  Zanin for pointing out a gap in our original  proof of Corollary~\ref{4.2} in  the earlier version of this paper and providing Appendix~\ref{appendix}.
We thank Thomas Scheckter for his careful reading of this paper.

\section{Preliminaries}\label{s:p}

In this section, we recall main notions of the theory of noncommutative integration, introduce some properties of generalised singular value functions and define noncommutative symmetric operator spaces.
For details on von Neumann algebra
theory, the reader is referred to e.g. \cite{Blackadar}, \cite{Dixmier}, \cite{KR}
or \cite{Tak}. General facts concerning measurable operators may
be found in \cite{Nelson}, \cite{Se}, \cite{DP2014} (see also the forthcoming book \cite{DPS}).
For convenience of the reader, some of the basic definitions are recalled.

\subsection{$\tau$-measurable operators and generalised singular values}
In what follows,  $\cH$ is a (not necessarily separable) Hilbert space and $\left(B(\cH),\norm{\cdot}_\infty  \right)$ is the
$*$-algebra of all bounded linear operators on $\cH$, and
$\mathbf{1}$ is the identity operator on $\cH$.
Let $\mathcal{M}$ be
a von Neumann algebra on $\cH$.
Let   $\cP\left(\mathcal{M}\right)$  be the set of all
projections of $\mathcal{M}$.
We denote  by $\cM_p$ the reduced von Neumann algebra $p\cM p$ generated by a projection $p\in \cP(\cM)$.

\label{s:p1}
A linear operator $x:\mathfrak{D}\left( x\right) \rightarrow \cH $,
where the domain $\mathfrak{D}\left( x\right) $ of $x$ is a linear
subspace of $\cH$, is said to be {\it affiliated} with $\mathcal{M}$
if $yx \subseteq xy$ for all $y\in \mathcal{M}^{\prime }$, where $\mathcal{M}^{\prime }$ is the commutant of $\mathcal{M}$.
A linear
operator $x:\mathfrak{D}\left( x\right) \rightarrow \cH $ is termed
{\it measurable} with respect to $\mathcal{M}$ if $x$ is closed,
densely defined, affiliated with $\mathcal{M}$ and there exists a
sequence $\left\{ p_n\right\}_{n=1}^{\infty}$ in the set  $\cP\left(\mathcal{M}\right)$ of all
projections of $\mathcal{M}$ such
that $p_n\uparrow \mathbf{1}$, $p_n(\cH)\subseteq\mathfrak{D}\left(x \right) $
and $\mathbf{1}-p_n$ is a finite projection (with respect to $\mathcal{M}$)
for all $n$.
It should be noted that the condition $p _{n}\left(
\cH\right) \subseteq \mathfrak{D}\left( x\right) $ implies that
$xp _{n}\in \mathcal{M}$. The collection of all measurable
operators with respect to $\mathcal{M}$ is denoted by $S\left(
\mathcal{M} \right) $, which is a unital $\ast $-algebra
with respect to strong sums and products (denoted simply by $x+y$ and $xy$ for all $x,y\in S\left( \mathcal{M}\right) $).

Let $x$ be a self-adjoint operator affiliated with $\mathcal{M}$.
We denote its spectral measure by $\{e^x\}$.
It is well known that if
$x$ is a closed operator affiliated with $\mathcal{M}$ with the
polar decomposition $x = u|x|$, then $u\in\mathcal{M}$ and $e\in
\mathcal{M}$ for all projections $e\in \{e^{|x|}\}$.
Moreover,
$x \in S(\mathcal{M})$ if and only if $x $ is closed,
 densely
defined, affiliated with $\mathcal{M}$ and $e^{|x|}(\lambda,
\infty)$ is a finite projection for some $\lambda> 0$.
 It follows
immediately that in the case when $\mathcal{M}$ is a von Neumann
algebra of type $III$ or a type $I$ factor, we have
$S(\mathcal{M})= \mathcal{M}$.
For type $II$ von Neumann algebras,
this is no longer true.
From now on, let $\mathcal{M}$ be a
semifinite von Neumann algebra equipped with a faithful normal
semifinite trace $\tau$.

For any closed and densely defined linear operator $x :\mathfrak{D}\left( x \right) \rightarrow \cH $,
the \emph{null projection} $n(x)=n(|x|)$ is the projection onto its kernel $\mbox{Ker} (x)$.
The \emph{left support} $l(x )$ is the projection onto the closure of its range $\mbox{Ran}(x)$ and the \emph{right support} $r(x)$ of $x$ is defined by $r(x) ={\bf{1}} - n(x)$.

An operator $x\in S\left( \mathcal{M}\right) $ is called $\tau$-measurable if there exists a sequence
$\left\{p_n\right\}_{n=1}^{\infty}$ in $\cP \left(\mathcal{M}\right)$ such that
$p_n\uparrow \mathbf{1}$, $p_n\left( \cH \right)\subseteq \mathfrak{D}\left(x \right)$ and
$\tau(\mathbf{1}-p_n)<\infty $ for all $n$.
The collection of all $\tau $-measurable
operators is a unital $\ast $-subalgebra of $S\left(
\mathcal{M}\right) $,  denoted by $S\left( \mathcal{M}, \tau\right)
$.
It is well known that a linear operator $x$ belongs to $S\left(
\mathcal{M}, \tau\right) $ if and only if $x\in S(\mathcal{M})$
and there exists $\lambda>0$ such that $\tau(e^{|x|}(\lambda,
\infty))<\infty$.
Alternatively, an unbounded operator $x$
affiliated with $\mathcal{M}$ is  $\tau$-measurable (see
\cite{FK}) if and only if
$$\tau\left(e^{|x|}(n,\infty)\right)\rightarrow 0,\quad n\to\infty.$$

\begin{definition}
Let a semifinite von Neumann  algebra $\mathcal M$ be equipped
with a faithful normal semi-finite trace $\tau$ and let $x\in
S(\mathcal{M},\tau)$. The generalised singular value function $\mu(x):t\rightarrow \mu(t;x)$, $t>0$,  of
the operator $x$ is defined by setting
$$
\mu(t;x)
=
\inf \left\{
\left\|xp\right\|_\infty :\ p\in \cP(\cM), \ \tau(\mathbf{1}-p)\leq t
\right\}.
$$
\end{definition}
An equivalent definition in terms of the
distribution function of the operator $x$ is the following. For every self-adjoint
operator $x\in S(\mathcal{M},\tau) $,  setting
$$d_x(t)=\tau(e^{x}(t,\infty)),\quad t>0,$$
we have (see e.g. \cite{FK} and \cite{LSZ})
$$
\mu(t; x)=\inf\{s\geq0:\ d_{|x|}(s)\leq t\}.
$$
Note that $d_x(\cdot)$ is a right-continuous function (see e.g.  \cite{FK} and \cite{DPS}).

Consider the algebra $\mathcal{M}=L^\infty(0,\infty)$ of all
Lebesgue measurable essentially bounded functions on $(0,\infty)$.
Algebra $\mathcal{M}$ can be viewed as an abelian von Neumann
algebra acting via multiplication on the Hilbert space
$\mathcal{H}=L^2(0,\infty)$, with the trace given by integration
with respect to Lebesgue measure $m.$
It is easy to see that the
algebra of all $\tau$-measurable operators
affiliated with $\mathcal{M}$ can be identified with
the subalgebra $S(0,\infty)$ of the algebra of Lebesgue measurable functions which consists of all functions $f$ such that
$m(\{|f|>s\})$ is finite for some $s>0$. It should also be pointed out that the
generalised singular value function $\mu(f)$ is precisely the
decreasing rearrangement $\mu(f)$ of the function $|f|$ (see e.g. \cite{KPS,Bennett_S
}) defined by
$$\mu(t;f)=\inf\{s\geq0:\ m(\{|f|\geq s\})\leq t\}.$$

For convenience of the reader,  we also recall the definition of the \emph{measure topology} $t_\tau$ on the algebra $S(\cM,\tau)$. For every $\varepsilon,\delta>0,$ we define the set
$$V(\varepsilon,\delta)=\{x\in S(\mathcal{M},\tau):\ \exists p \in \cP\left(\mathcal{M}\right)\mbox{ such that }
\left\|x(\mathbf{1}-p)\right\|_\infty \leq\varepsilon,\ \tau(p)\leq\delta\}.$$ The topology
generated by the sets $V(\varepsilon,\delta)$,
$\varepsilon,\delta>0,$ is called the measure topology $t_\tau$ on $S(\cM,\tau)$ \cite{DPS, FK, Nelson}.
It is well known that the algebra $S(\cM,\tau)$ equipped with the measure topology is a complete metrizable topological algebra \cite{Nelson}.
We note that a sequence $\{x_n\}_{n=1}^\infty\subset S(\cM,\tau)$ converges to zero with respect to measure topology $t_\tau$ if and only if $\tau\big( e  ^{|x_n|}(\varepsilon,\infty)\big)\to 0$ as $n\to \infty$ for all $\varepsilon>0$ \cite{DPS}.

The space  $S_0(\cM,\tau)$ of $\tau$-compact operators is the space associated to the algebra of functions from $S(0,\infty)$ vanishing at infinity, that is,
$$S_0(\cM,\tau) = \{x\in S(\cM,\tau) :  \ \mu(\infty; x) =0\}.$$
The two-sided ideal $\cF(\tau)$ in $\cM$ consisting of all elements of $\tau$-finite range is defined by
$$\cF(\tau)=\{x\in \cM ~:~ \tau(r(x)) <\infty\} = \{x \in \cM ~:~ \tau(s(x)) <\infty\}.$$
Note that  $S_0(\cM,\tau)$ is the closure of $\cF(\tau)$ with respect to the measure topology \cite{DP2014}.

A further important vector space topology on $S(\cM,\tau)$ is the \emph{local measure topology} \cite{DP2014,DPS}.
A neighbourhood base for this topology is given by the sets $V(\varepsilon, \delta; p )$, $\varepsilon, \delta>0$, $p\in \cP(\cM)\cap \cF(\tau)$, where
$$V(\varepsilon,\delta;  p ) = \{x\in S(\cM,\tau): pxp \in V(\varepsilon,\delta)\}. $$
It is clear that the localized  measure topology is weaker than the measure topology~\cite{DP2014,DPS}.
If $\{x_\alpha\}\subset S(\cM,\tau)$ is a net and if $x_\alpha \rightarrow_\alpha x \in S(\cM,\tau)$ in local measure topology, then $x_\alpha y\rightarrow xy $ and $yx _\alpha \rightarrow yx $ in the local measure topology for all $y \in S(\cM,\tau)$ \cite{DP2014,DPS}.
If $0\le a_i $ is an increasing net in $S(\cM,\tau)$ and if $a\in S(\tau)$ is such that $a=\sup a_i$, then we write $0\le a_i \uparrow a$\cite[p.212]{DP2014}.
If $\{x_i\}$ is an increasing net in $S(\cM,\tau)_+$ and $x \in S(\cM,\tau)_+$ such that $x_i\to x$ in the local measure topology, then $x_i\uparrow x$ (see e.g. \cite[Chapter II, Proposition 7.6 (iii)]{DPS}).

\subsection{Symmetric  spaces of $\tau$-measurable operators}

Let $E(0,\infty)$  be a Banach space of real-valued Lebesgue measurable
functions on  $(0,\infty)$ (with identification
$m$-a.e.), equipped with a  norm $\left\|\cdot\right\|_E$.
The space $E(0,\infty)$ is said to be {\it
absolutely solid} if $x\in E(0,\infty)$ and $|y|\leq |x|$, $y\in S(0,\infty)$
implies that $y\in E(0,\infty)$ and $\|y\|_E\leq\|x\|_E.$
An absolutely solid space $E(0,\infty)\subseteq S(0,\infty)$ is said to be {\it
symmetric} if for every $x\in E(0,\infty)$ and every $y\in S(0,\infty)$,
 the assumption
$\mu(y)=\mu(x)$ implies that $y\in E(0,\infty)$ and $\left\|y\right\|_E=\left\|x\right\|_E$
\cite{KPS}.
Without of loss generality,
throughout this paper, we always assume that $\norm{\chi_{(0,1)}}_{E(0,\infty)}=1$.

We now come to the definition of the main object of this paper.
\begin{definition}\label{opspace}
Let $\cM $ be a semifinite von Neumann  algebra  equipped
with a faithful normal semi-finite trace $\tau$.
Let $\mathcal{E}$ be a linear subset in $S({\mathcal{M}, \tau})$
equipped with a complete norm $\left\|\cdot \right \|_{\mathcal{E}}$.
We say that
$\mathcal{E}$ is a \textit{symmetric    space}  if
for $x \in
\mathcal{E}$, $y\in S({\mathcal{M}, \tau})$ and  $\mu(y)\leq \mu(x)$ imply that $y\in \mathcal{E}$ and
$\left\|y\right\|_\mathcal{E}\leq \left\|x\right\|_\mathcal{E}$.
\end{definition}
Let $E(\cM,\tau)$ be a symmetric   space.
It is well-known that any symmetrically normed space $E(\cM,\tau)$ is a normed $\cM$-bimodule (see e.g.  \cite{DP2014} and \cite{DPS}). 
That is, for any symmetric operator space $E(\cM,\tau)$, we have
$\left\|axb\right\|_E \le \left\|a\right\|_\infty \left\|b\right\|_\infty \left\|x\right\|_E, ~a, b \in \cM,~ x\in E(\cM,\tau)$.
It is known that whenever $E(\cM,\tau)$ has   order continuous norm $\norm{\cdot}_E$, i.e.,  $\norm{x_\alpha}_E\downarrow 0$ whenever $0\le x_\alpha \downarrow 0\subset E(\cM,\tau)$, we  have $E(\cM,\tau)\subset S_0(\cM,\tau)$\cite{DPS,HSZ,DP2014}.

 The so-called K\"{o}the dual is identified with an important part of the dual space.
If $E (\cM,\tau) \subset S(\cM,\tau)$ is a symmetric space, then the K\"{o}the dual $E(\cM,\tau)^\times $ of $E$ is defined by setting
$$ E(\cM,\tau)^\times =\left\{   x\in S(\cM,\tau) : \sup_{\|y\|_E\le 1, y\in E(\cM,\tau)}\tau (|xy|)   <\infty    \right\}.$$
The K\"{o}the dual $E(\cM,\tau)^\times  $ can be identified as   a subspace of the Banach dual $E(\cM,\tau)$ via the trace duality \cite[p.228]{DP2014}.
Recall that $x\in L_1(\cM,\tau)+\cM:=
 \{
a\in S(\cM,\tau):\mu(a)\in L_1(0,\infty)+L_\infty(0,\infty)
 \}$
can be equipped with
a norm $\norm{x}_{L_1+L_\infty}= \int_0^1 \mu(s;x)ds $
and $x\in L_1(\cM,\tau) \cap \cM:=
 \{a\in S(\cM,\tau):\mu(a)\in L_1(0,\infty)\cap L_\infty(0,\infty) \}$ can be equipped with a norm $\norm{x}_{L_1\cap L_\infty}:=\max \{\norm{x}_1,\norm{x}_\infty\}$.
In particular, $\left(L_1(\cM,\tau)\cap\cM\right)^\times= L_1(\cM,\tau)+\cM$ and $L_1(\cM,\tau)\cap\cM= \left( L_1(\cM,\tau)+\cM\right)^\times$\cite[Example 4]{DP2014}.

The \emph{carrier projection} $c_E\in \cM$ of   $E(\cM,\tau)$ is defined by setting
$$c_E := \bigvee \left\{p:p\in \cP(E)\right\}.$$
 It is clear that $c_E$ is in the center $Z(\cM)$ of $\cM$ \cite{DP2014}.
It is often assumed that the carrier projection $c_E$ is equal to ${\bf 1}$.
Indeed, for any  symmetric function  space  $E(0,\infty)$, the carrier projection of the corresponding operator space $E(\cM,\tau)$ is always ${\bf 1}$ (see e.g.
\cite{DP2014,Kalton_S}).
On the other hand, if $\cM$ is atomless or is atomic and all atoms have equal trace, then any non-zero symmetric space $E(\cM,\tau)$ is necessarily $\bf 1$\cite{DP2014,DPS}.
In this case, whenever  $E(\cM,\tau)$ has order continuous norm, then $E(\cM,\tau)^\times $ is isometrically isomorphic to $E(\cM,\tau)^*$ (see e.g. \cite{DDP93}, \cite[Proposition 6.4]{DP2012} or \cite[Proposition 47(v)]{DP2014}).



There exists a strong connection between symmetric function spaces and
operator spaces exposed in \cite{Kalton_S} (see also \cite{LSZ,DP2014}).
The operator space $E(\cM,\tau)$ defined by
\begin{equation*}
E(\mathcal{M},\tau):=\{x \in S(\mathcal{M},\tau):\ \mu(x )\in E(0,\infty)\},
\ \left\|x \right\|_{E(\mathcal{M},\tau)}:=\left\|\mu(x )\right\|_E
\end{equation*}
 is a complete symmetric  space  whenever $(E(0,\infty),\left\|\cdot\right\|_E)$ is    a complete  symmetric  function space on $(0,\infty)$  \cite{Kalton_S}.
In particular, for any symmetric function space $E(0,\infty)$, $F(\tau)\subset E(\cM,\tau)$\cite[Lemma 18]{DP2014}.
In the special case when $E(0,\infty)=L_p(0,\infty)$, $1\le  p\le \infty$, $E(\cM,\tau)$ is the noncommutative $L_p$-spaces affiliated with $\cM$ and we denote the norm by $\norm{\cdot}_p$.
We note that if $E(0,\infty )$ is separable (i.e. has order continuous norm), then $E^\times (\cM,\tau) $ is isometrically isomorphic to $E(\cM,\tau)^*$\cite[p.246]{DP2014}.
Recall that every separable symmetric sequence/function  space $E$ is fully symmetric, that is,
 if $x\in E$ and $y\in \ell_\infty$ (resp. $y\in S(0,\infty)$) with
 $$ \int_0^t \mu(t;y)dt \le \int_0^t\mu(t;x)dt, ~t\ge 0 $$
 (denoted by $y\prec \prec x$),
 then $y\in E$ with $\norm{y}_E\le \norm{x}_E$ (see e.g. \cite[Chapter II,Theorem 4.10]{KPS} or \cite[Chapter IV, Theorem 5.7]{DPS}).


\section{Hermitian operators}

Let $X$ be a Banach space.
Recall that a \emph{semi-inner product} (abbreviated \emph{s.i.p.}) on $X$ is a mapping $\langle \cdot,\cdot\rangle$ of $X\times X$ into the field   of complex numbers such that:
\begin{enumerate}
  \item $\langle x+y,z\rangle= \langle x,z\rangle+\langle y,z\rangle$ for $x,y,z\in X$;
  \item $\langle \lambda x,y \rangle=\lambda \langle x,y \rangle$ for $x,y\in X$ and $\lambda\in \mathbb{C}$;
  \item $\langle x,x\rangle >0$ for $0\ne x\in X$;
  \item $|\langle  x,y\rangle|^2 \le \langle x,x\rangle\langle y,y\rangle$ for any $x,y\in X$.
\end{enumerate}
When a s.i.p is defined on $X$, we call $X$ a \emph{semi-inner-product space} (abbreviated \emph{s.i.p.s.}).
If $X$ is a s.i.p.s., then $\langle x,x \rangle^{\frac12}$
 is a norm on $X$.
 On the other hand, every Banach space can be made into a s.i.p.s. (in general, in infinitely many ways) so that the s.i.p. is consistent with the norm, i.e., $\langle x,x\rangle^\frac 12 = \norm{x}$ for any $x\in X$\cite{FJ}.
 By virtue of the Hahn--Banach theorem, this can be accomplished by choosing  one bounded linear functional $f_x$ for each $x\in X$  such that $\norm{f_x} =\norm{x}$
and $f_x(x)=\norm{x}^2$ ($f_x$ is called a \emph{support functional} of $x$), and then setting $\langle x,y\rangle = f_y (x)$ for arbitrary $x,y\in X$\cite{Berkson, Lumer,FJ}.
Given a linear transformation $T$ mapping a s.i.p.s. into itself, we denote by $W(T)$ the \emph{numerical range} of $T$, that is,  $\{\langle Tx,x\rangle| \langle x,x\rangle=1, x\in X\}$.
Let $T$ be an operator on a Banach space $(X,\norm{\cdot})$. Although in
principle there may be many different s.i.p. consistent
 with $\norm{\cdot}$,
 nonetheless if the numerical range of $T$ relative to one such
s.i.p. is real, then the numerical range relative to any such s.i.p. is real  (see e.g. \cite[p.107]{FJ}, \cite[Section 6]{Lumer} and \cite[p.377]{Berkson}).
If this is the case, $T$ is said to be a \emph{hermitian} operator on $X$.

From now on, unless stated otherwise,  we  always assume that $\cM$ is an   atomless semifinite  von Neumann algebra or an atomic semifinite von Neumann algebra with all atoms having  the same traces (without loss of generality, we assume that $\tau(e)=1$ for any atom $e\in \cM$), and we assume that  $\tau$ is a semifinite faithful normal trace on $\cM$.

In particular,
when $\cM$ is atomless (resp. atomic), the set
$$E(0,\tau({\bf 1})):=\{f\in S(0,\tau({\bf 1})) :\mu(f)=\mu(x) \mbox{ for some } x\in E(\cM,\tau)\}$$
(resp.
$$\ell_E:=\{f\in  \ell_\infty  :\mu(f)=\mu(x) \mbox{ for some } x\in E(\cM,\tau)\})$$
is a symmetric function (resp. sequence) space\cite[Theorem 2.5.3]{LSZ}.
There exists a bijective correspondence between symmetric operator spaces and symmetric function/sequence spaces.
Therefore, if $\norm{\cdot}_E$ on $E(\cM,\tau)$ is not proportional to $\norm{\cdot}_2$ on $L_2(\cM,\tau)$, then $\norm{\cdot}_E$
is not proportional to   $\norm{\cdot}_2$ on $L_2(\cA,\tau)$ for any maximal abelian von Neumann subalgebra $\cA$ of $\cM$.


Sourour\cite[Lemma 1]{Sourour} obtained Lemma \ref{lemma:orthogonal} below in the setting of $B(\cH)$ by using  a result due to  Schneider and  Turner
(see e.g. \cite[Lemma 3.1]{ST} and \cite[Lemma 9.2.7]{FJ}).
Arazy gave  a self-contained alternative proof in the setting of complex sequence spaces\cite{Arazy85}.
In the proof of the following lemma,
 we adopt   Arazy's proof.
Due to the  technical differences between the atomless case and atomic case, we provide a full proof below.

 Before proceeding to the proof of Lemma \ref{lemma:orthogonal}, we need the following well-known proposition.
For the sake of completeness, we provide a short proof below.
\begin{proposition}\label{prop:tppR}
Let $p\in \cF(\tau)$ be a  projection and let $E(\cM,\tau)$ be an arbitrary  symmetric operator space having order continuous norm.
   Then,  $ \frac{\norm{p}_E  ^2 }{\tau(p)} p \in E(\cM,\tau)^*$ is a support functional of $p\in E(\cM,\tau)$,
    i.e.,
 $ \tau\left(p\cdot  \frac{\norm{p} _E^2 }{\tau(p)} p  \right) =\norm{p}_E^2  =\norm{p}_E \norm{ \frac{\norm{p} _E^2 }{\tau(p)} p }_{E^*}  $.
In particular, for any bounded hermitian operator $T$ on $E(\cM,\tau)$, we have
\begin{align}\label{prop:tppR}
\tau(T(p) p)\in \mathbb{R}.
\end{align}
\end{proposition}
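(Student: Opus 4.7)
The strategy is to handle the two assertions of the proposition in sequence. First I would establish that $g := \frac{\norm{p}_E^2}{\tau(p)}\, p$, viewed as an element of $E(\cM,\tau)^\times$ acting through the trace pairing, is a support functional of $p \in E(\cM,\tau)$; the consequence for hermitian operators then falls out by unwinding the definition. Throughout, I will use freely the identification $E(\cM,\tau)^\times \cong E(\cM,\tau)^*$ recalled in Section~\ref{s:p}, which is available because the norm on $E$ is order continuous and $\cM$ satisfies the standing hypothesis, so that $c_E = \mathbf{1}$.

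For the support-functional claim, the value at $p$ is immediate from $p = p^2$:
\begin{equation*}
\tau(p \cdot g) \;=\; \tfrac{\norm{p}_E^{\,2}}{\tau(p)}\,\tau(p) \;=\; \norm{p}_E^{\,2}.
\end{equation*}
What is left is $\norm{g}_{E^\times} = \norm{p}_E$, which after pulling out the positive constant reduces to the identity $\norm{p}_E \,\norm{p}_{E^\times} = \tau(p)$. Since $\mu(p) = \chi_{(0,\tau(p))}$, both norms depend only on $\tau(p)$ and the claim becomes the commutative fundamental-function identity $\phi_E(a)\,\phi_{E^\times}(a) = a$, where $\phi_E(a) := \norm{\chi_{(0,a)}}_E$. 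The inequality $\ge a$ comes from plugging the unit vector $\chi_{(0,a)}/\phi_E(a)$ into the trace pairing. For the reverse inequality I would invoke the full symmetry of the separable space $E(0,\infty)$, noted at the end of Section~\ref{s:p}: for any nonnegative decreasing $f$ with $\norm{f}_E \le 1$, the averaged function $\frac{1}{a}\bigl(\int_0^a f\bigr)\chi_{(0,a)}$ lies below $f$ in the Hardy--Littlewood preorder and hence has $E$-norm at most $1$, giving $\int_0^a f \le a/\phi_E(a)$; taking the supremum over such $f$ yields $\phi_{E^\times}(a) \le a/\phi_E(a)$.

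Once $g$ is known to be a support functional of $p$, the hermitian statement is a short computation. Set $\hat p := p/\norm{p}_E$, a unit vector in $E(\cM,\tau)$, and equip the space with a consistent s.i.p.\ whose support functional at $\hat p$ is $f_{\hat p} := g/\norm{p}_E$ (one checks $f_{\hat p}(\hat p)=1$ and $\norm{f_{\hat p}}_{E^\times}=1$). Then
\begin{equation*}
\langle T\hat p,\hat p\rangle \;=\; f_{\hat p}(T\hat p) \;=\; \tfrac{1}{\norm{p}_E^{\,2}}\,\tau\!\bigl(T(p)\,g\bigr) \;=\; \tfrac{1}{\tau(p)}\,\tau\bigl(T(p)\,p\bigr).
\end{equation*}
Hermiticity of $T$, together with the invariance of realness of the numerical range under change of consistent s.i.p.\ recalled in the paragraph preceding the proposition, makes the left-hand side real, whence $\tau(T(p)p) \in \mathbb{R}$.

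The only non-routine ingredient I anticipate is the fundamental-function identity $\phi_E\,\phi_{E^\times}=\mathrm{id}$; full symmetry of the ambient separable space $E(0,\infty)$ is essential there, since without it the reverse inequality can fail. Everything else is direct bookkeeping with the trace pairing and the semi-inner-product formalism.
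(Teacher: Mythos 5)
Your proposal is correct and follows essentially the same route as the paper: the value $\tau(p\cdot g)=\norm{p}_E^2$ is immediate, the upper bound $\norm{p}_{E^\times}\le \tau(p)/\norm{p}_E$ comes from the same averaging/submajorization argument (the paper writes $\frac{\int_0^{\tau(p)}\mu(s;z)\,ds}{\tau(p)}\chi_{(0,\tau(p))}\prec\prec\mu(z)$ and invokes full symmetry of the separable space), and the lower bound is the trace--duality (H\"older) inequality $\tau(p)\le\norm{p}_{E^*}\norm{p}_E$. The concluding s.i.p.\ computation for $\tau(T(p)p)\in\mathbb{R}$ is the standard unwinding the paper leaves implicit, so no further comment is needed.
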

\begin{proof}
We only consider the case when
$\cM$ is atomless. The atomic case follows from the same argument (see also \cite{Arazy85} or \cite[Theorem 5.2.13]{FJ}).



Note that
$$  \norm{p }_{E^*}= \norm{p }_{E^\times } = \sup \left\{  \int_{0}^{\tau(p)} \mu(s;z )ds :z\in E(\cM,\tau), ~\norm{z}_E =  1 \right\} . $$
Since $\frac{\int_0^{\tau(p)} \mu(s;z )ds  }{{\tau(p)} } \mu( p )  = \frac{\int_0^{\tau(p)} \mu(s;z )ds }{{\tau(p)} } \chi_{(0,\tau(p))}   \prec \prec \mu(z)   $, $z\in E(\cM,\tau)$, $\norm{z}_E =  1$
  (see e.g. \cite[Section 3.6]{LSZ}), we obtain that $\frac{\int_0^{\tau(p)} \mu(s;z )ds  }{{\tau(p)} }  \norm{p}_E \le \norm{z}_E =1 $, and therefore,
$$\norm{p  }_{E^*}  \le \frac{\tau(p)}{\norm{p }_E }.$$
On the other hand,    we have\cite[Remark 3]{DP2014}
$$\tau(p  )\le  \norm{p}_{E^*} \norm{p}_{E}.$$
Hence, $\tau(p  )= \norm{p}_{E^*} \norm{p}_{E}$, i.e. $\tau\left(p\cdot  \frac{\norm{p} _E^2 }{\tau(p)} p  \right) = \norm{p}_E^2 =    \norm{p}_E \norm{ \frac{\norm{p} _E^2 }{\tau(p)} p }_{E^*}  $.
\end{proof}

\begin{cor}\label{prop:tUUR}
Let $u\in \cF(\tau)$ be a  partial isometry and let $E(\cM,\tau)$ be an arbitrary  symmetric operator space having order continuous norm.
   Then,  $ \frac{ \norm
{ u^*u } _E ^2 }{\tau(u^* u)} u^*   \in E(\cM,\tau)^*$ is a support functional of $u\in E(\cM,\tau)$.
In particular, for any bounded  hermitian operator $T$ on $E(\cM,\tau)$, we have
\begin{align*}
\tau(T(u) u^*)\in \mathbb{R}.
\end{align*}
\end{cor}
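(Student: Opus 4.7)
The plan is to reduce the claim to Proposition~\ref{prop:tppR} by exploiting the polar-decomposition identities for a partial isometry. Writing $p := u^{*}u \in \cP(\cM) \cap \cF(\tau)$, the key observation is that $|u| = p$, so the generalised singular value functions satisfy
$$\mu(u) = \mu(|u|) = \mu(p) = \chi_{(0,\tau(p))},$$
and similarly $\mu(u^{*}) = \mu(uu^{*}) = \mu(p)$ (since $\tau(uu^{*}) = \tau(u^{*}u)$). Consequently $\norm{u}_{E} = \norm{p}_{E}$ and, via the K\"othe duality $E(\cM,\tau)^{\times} = E(\cM,\tau)^{*}$ valid under our standing hypotheses, $\norm{u^{*}}_{E^{*}} = \norm{p}_{E^{*}}$.

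Next, I would verify directly that $\varphi := \frac{\norm{p}_{E}^{2}}{\tau(p)}\, u^{*}$ is a support functional of $u$. For the value at $u$, we use the trace pairing:
$$\langle u, \varphi\rangle = \tau\!\left(u \cdot \tfrac{\norm{p}_{E}^{2}}{\tau(p)}\, u^{*}\right) = \tfrac{\norm{p}_{E}^{2}}{\tau(p)} \,\tau(uu^{*}) = \norm{p}_{E}^{2} = \norm{u}_{E}^{2}.$$
For the dual norm, by the proof of Proposition~\ref{prop:tppR} we already know $\norm{p}_{E^{*}} = \tau(p)/\norm{p}_{E}$, so
$$\norm{\varphi}_{E^{*}} = \tfrac{\norm{p}_{E}^{2}}{\tau(p)} \norm{u^{*}}_{E^{*}} = \tfrac{\norm{p}_{E}^{2}}{\tau(p)} \cdot \tfrac{\tau(p)}{\norm{p}_{E}} = \norm{p}_{E} = \norm{u}_{E}.$$
Combined, these give $\langle u,\varphi\rangle = \norm{u}_{E}\norm{\varphi}_{E^{*}} = \norm{u}_{E}^{2}$, confirming that $\varphi$ is a support functional of $u$.

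Finally, for the hermitian statement, I would recall that once a support functional is fixed for every element of the Banach space, the resulting semi-inner product is consistent with the norm, and $T$ is hermitian precisely when its numerical range with respect to such a s.i.p. is contained in $\mathbb{R}$ (this independence of the s.i.p.\ is recalled in the paragraph preceding the statement). Applying this with the support functional $\varphi$ of $u$ yields
$$\langle Tu, u\rangle = \varphi(Tu) = \tfrac{\norm{p}_{E}^{2}}{\tau(p)}\, \tau(T(u)\, u^{*}) \in \mathbb{R},$$
and since the scalar $\norm{p}_{E}^{2}/\tau(p)$ is real and strictly positive, we conclude $\tau(T(u)u^{*}) \in \mathbb{R}$.

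The only mildly subtle point—and thus the main place to be careful—is the identification of the trace-dual norm of $u^{*}$ with $\norm{p}_{E^{*}}$; this uses that $E(\cM,\tau)^{\times}$ is symmetric (so its norm depends only on $\mu(\cdot)$) together with $\mu(u^{*}) = \mu(p)$, and that under the order-continuity assumption K\"othe duality realises $E(\cM,\tau)^{*}$ isometrically. Everything else is a direct computation built on Proposition~\ref{prop:tppR}.
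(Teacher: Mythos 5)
Your proposal is correct, but it takes a genuinely different route from the paper's. The paper does not verify the support-functional claim by direct computation; instead it constructs a unitary $v''\in\cM$ with $v''r(u)=u$ (extending $u$ by a partial isometry between $r(u)\vee l(u)-r(u)$ and $r(u)\vee l(u)-l(u)$, which uses the $\tau$-finiteness of $r(u),l(u)$ and a unitary-equivalence lemma from Takesaki), observes that $(v'')^{*}T(v''\,\cdot)$ is again a bounded hermitian operator, and thereby reduces $\tau(T(u)u^{*})\in\mathbb{R}$ to the projection case \eqref{prop:tppR}. You instead verify the first assertion head-on: since $|u|=u^{*}u=p$ is a projection, $\mu(u)=\mu(u^{*})=\chi_{(0,\tau(p))}$, whence $\norm{u}_{E}=\norm{p}_{E}$ and, by the symmetry of the K\"othe dual norm together with the isometric identification $E^{\times}\cong E^{*}$ under order continuity (the same identification already invoked in the proof of Proposition~\ref{prop:tppR}), $\norm{u^{*}}_{E^{*}}=\norm{p}_{E^{*}}=\tau(p)/\norm{p}_{E}$; combined with $\tau(uu^{*})=\tau(u^{*}u)$ this exhibits $\frac{\norm{p}_{E}^{2}}{\tau(p)}u^{*}$ as a support functional, and the trace claim then follows from the fact that the reality of the numerical range is independent of the choice of consistent s.i.p. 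Both arguments are sound. Your route has the advantage of actually proving the stated support-functional assertion (the paper's proof only addresses the ``in particular'' part) and of avoiding the unitary dilation altogether; the paper's conjugation trick, on the other hand, is reused verbatim later (in the proof of Lemma~\ref{lemma:orthogonal}, to pass from projections to partial isometries), so it earns its keep there.
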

\begin{proof}
Since $u\in \cF(\tau)$, it follows  that $r(u)$ and $l(u)$ are $\tau$-finite projections.
Hence, $r(u)\vee l(u)$ are also $\tau$-finite.
Therefore, there exists a unitary element $v$ in $  \cM _{r(u)\vee l(u)} $ such that $v^*  l(u)v =r(u)$\cite[Chapter XIV, Lemma 2.1]{T3}.
Define $v':= u+ v\cdot (r(u)\vee l(u) -r(u))$.
Note that
$$(r(u)\vee l(u) -r(u)) v^*u= (r(u)\vee l(u) -r(u)) v^*l(u)  u =(r(u)\vee l(u) -r(u)) r(u)v^*u =0.$$
We have
\begin{align*}
(v')^*v' &= \left ( u+ v\cdot (r(u)\vee l(u) -r(u)) \right)^* \left(u+ v\cdot (r(u)\vee l(u) -r(u))\right) \\
&=r (u )  +(r(u)\vee l(u) -r(u)) = r(u)\vee l(u) .
\end{align*}
Hence,   $v'$ is a unitary element in  $  \cM_{ r (u)\vee l(u) } $ and therefore,
 $$v'':= v'+ ({\bf 1}- r(u)\vee l(u)  )$$ is a unitary element in $\cM$.
  It follows that $( v'' )^*   T (  v'' \cdot ) $  on $E(\cM,\tau)$ is also a bounded hermitian operator\cite[p.22]{FJ}.
Noting that $v'' r(u) =u$, we obtain
  $$  \tau(T (u)  u^* ) =  \tau\left (
  ( v'' )^*   T  \left ( v''    r ( u )\right) r(u)
   \right ) \stackrel{\eqref{prop:tppR}}{\in} \mathbb{R}. $$
\end{proof}

\begin{lemma}
\label{lemma:orthogonal}
Let $E(\cM,\tau)$ be a   symmetric space affiliated with $\cM$, whose norm is order continuous and is not proportional to $\norm{\cdot}_2$.
Let $x_1,x_2\in \cF(\tau )$   be two partial isometries such that $l(x_1) \perp l(x_2)$ and $r(x_1)\perp r(x_2)$.
Then, for any bounded hermitian operator $T:E(\cM,\tau)\to E(\cM,\tau )$, we have
$$\tau (T(x_1)x _2^* ) = 0.$$
Consequently, if $x_1\in E(\cM,\tau)$ and $x_2\in L_1(\cM,\tau)\cap \cM$ with  $l(x_1) \perp l(x_2)$ and $r(x_1)\perp r(x_2)$, then $$\tau (T(x_1)x _2^* ) = 0.$$
\end{lemma}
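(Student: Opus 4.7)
My plan is to adapt Arazy's sequence-space argument \cite{Arazy85} to the present noncommutative setting, replacing rank-one projections by partial isometries in $\cF(\tau)$ and using the orthogonality of left and right supports to ensure that phase-rotated linear combinations remain partial isometries. Since $l(x_1)\perp l(x_2)$ and $r(x_1)\perp r(x_2)$, one computes $x_1 x_2^*=x_2 x_1^*=0$, so $e^{i\theta_1}x_1+e^{i\theta_2}x_2$ is a partial isometry in $\cF(\tau)$ for every $\theta_1,\theta_2\in\mathbb{R}$. Applying Corollary~\ref{prop:tUUR} to this element, expanding the trace, and using that the diagonal terms $\tau(T(x_j)x_j^*)$ are real, I would vary $\theta_1-\theta_2$ to separate real and imaginary parts and conclude
\[
\tau(T(x_1)x_2^*)=\overline{\tau(T(x_2)x_1^*)}. \qquad (\ast)
\]

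To upgrade $(\ast)$ to vanishing, I would embed $x_1,x_2$ into a larger family $v_1,\dots,v_n\in\cF(\tau)$ ($n\ge 2$) of partial isometries with pairwise orthogonal left and right supports and, by the hypothesis on $\cM$, equal trace $\tau(r(v_k))$ (shrinking supports in the atomless case, or using equality of atomic traces in the atomic case, at the price of first decomposing $x_1,x_2$ into pieces of equal rank and summing at the end). Setting $t_{l,k}:=\tau(T(v_k)v_l^*)$, the step-one argument yields $t_{l,k}=\overline{t_{k,l}}$ for $k\ne l$. Using the bijective correspondence between symmetric operator spaces and symmetric function/sequence spaces recalled earlier, together with the hypothesis that $\|\cdot\|_E$ is not proportional to $\|\cdot\|_2$, I would invoke an Arazy-type independence lemma to produce linearly independent nonnegative coefficient vectors $(x(k))_{k=1}^n$ and $(y(k))_{k=1}^n$ such that $\xi:=\sum_k x(k)v_k$ and $\eta:=\sum_k y(k)v_k$ satisfy $\|\xi\|_E=\|\eta\|_{E^*}=\tau(\eta^*\xi)=1$. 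By orthogonality of supports, the phase rotations $\xi_\theta:=\sum_k e^{i\theta_k}x(k)v_k$ and $\eta_\theta:=\sum_k e^{i\theta_k}y(k)v_k$ preserve singular value functions and the trace pairing, so $\eta_\theta^*$ remains a support functional of $\xi_\theta$, and the hermitian hypothesis forces $\tau(\eta_\theta^*T(\xi_\theta))\in\mathbb{R}$. Taking imaginary parts and applying $(\ast)$ reduces this to
\[
\sum_{k\ne l}e^{i(\theta_k-\theta_l)}\,t_{l,k}\bigl(x(k)y(l)-x(l)y(k)\bigr)=0\quad\text{for all }\theta\in\mathbb{R}^n.
\]
Linear independence of the family $\{e^{i(\theta_k-\theta_l)}\}_{k\ne l}$ forces $t_{l,k}(x(k)y(l)-x(l)y(k))=0$, and iterating with index permutations (together with the linear independence of $(x(k))$ and $(y(k))$) yields $t_{l,k}=0$; in particular $\tau(T(x_1)x_2^*)=t_{2,1}=0$.

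For the ``moreover'' part I would approximate $x_1\in E(\cM,\tau)$ (respectively $x_2\in L_1(\cM,\tau)\cap\cM$) in $\|\cdot\|_E$ (resp.\ in $\|\cdot\|_{L_1\cap\cM}$) via the spectral decomposition by finite linear combinations of partial isometries in $\cF(\tau)$ with supports contained in those of $x_1$ (resp.\ $x_2$), using order continuity of $\|\cdot\|_E$ in the first case. Each approximating pair inherits the orthogonality hypothesis, and since $T$ is bounded on $E(\cM,\tau)$ and $E(\cM,\tau)\subset L_1(\cM,\tau)+\cM=(L_1(\cM,\tau)\cap\cM)^\times$, the bilinear pairing $(a,b)\mapsto\tau(T(a)b^*)$ is jointly continuous, so passage to the limit concludes the argument. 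The principal obstacle is the Arazy-type independence lemma invoked in the middle paragraph: one must translate non-proportionality of $\|\cdot\|_E$ to $\|\cdot\|_2$ into explicit nonnegative coefficient sequences $(x(k)),(y(k))$ witnessing a non-$\ell_2$ relation on the finite-dimensional subspace spanned by the $v_k$'s. In the atomic case this is essentially the content of \cite{Arazy85}; the atomless case requires, in addition, a careful choice of equal-trace supports for the $v_k$'s so as to reduce to the scalar situation.
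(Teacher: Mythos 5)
Your proposal follows essentially the same route as the paper's proof: the reality of $\tau(T(u)u^*)$ for partial isometries in $\cF(\tau)$ (Corollary~\ref{prop:tUUR}), a phase-rotation argument yielding the conjugate symmetry $t_{l,k}=\overline{t_{k,l}}$, Arazy's independence lemma combined with non-proportionality to $\norm{\cdot}_2$, index permutations to kill the off-diagonal terms, and an approximation scheme for the ``moreover'' part. The only organizational difference is that you work with partial isometries throughout, whereas the paper first proves the statement for orthogonal projections and then transfers it to partial isometries by dilating them to a unitary $v''$ and conjugating $T$ (the device of Corollary~\ref{prop:tUUR}). Your direct route is legitimate: since the $v_k$ have pairwise orthogonal left \emph{and} right supports, $v_k^*v_l=v_kv_l^*=0$ for $k\ne l$, so $\bigl|\sum_k x(k)v_k\bigr|^2=\sum_k x(k)^2\,r(v_k)$ lives in an abelian algebra and the norm and trace-pairing computations reduce to the scalar case exactly as for projections; this buys a slightly shorter exposition at no real cost.

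The one step you cannot carry out as written is the reduction to equal traces by ``decomposing $x_1,x_2$ into pieces of equal rank and summing at the end.'' In the atomless case the ratio $\tau(r(x_1))/\tau(r(x_2))$ may be irrational, and then no exact common refinement into equal-trace pieces exists. The paper resolves this by a limiting argument: it fixes the trace value $\tau(e_1)$ at which non-proportionality to $\norm{\cdot}_2$ is witnessed, observes that non-proportionality persists under halving the trace, approximates $p$ and $q$ by orthogonal sums of projections of trace $2^{-k}\tau(e_1)$ up to remainders of trace at most $\varepsilon$, and controls the two resulting error terms via $\norm{T(p)(q-\sum_i q_i)}_1\to 0$ and the order continuity of $\norm{\cdot}_E$ (which gives $\norm{p-\sum_i p_i}_E\to 0$). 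You need to insert this (or an equivalent) approximation; once it is in place, the rest of your argument, including the joint continuity of $(a,b)\mapsto\tau(T(a)b^*)$ on $E(\cM,\tau)\times(L_1(\cM,\tau)\cap\cM)$ used for the ``moreover'' part, goes through.
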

\begin{proof}
We only prove the case for atomless von Neumann algebra. The atomic case follows by a  similar argument.

We first consider the case when $x_1$ and $x_2$ are two projections such that $x_1x_2=0$.

Since $\norm{\cdot}_E$ is not proportional to $\norm{\cdot}_2$, it follows that there exists a set of pairwise orthogonal  projections $\{e_i\}_{1\le i\le n}$ having the same trace such that $\norm{\cdot}_E$ on $E(\cA)$ is not proportional to $\norm{\cdot}_2$ on  $L_2(\cA)$,
where $\cA$ is
 the abelian weakly closed $*$-algebra generated by $\{e_i\}_{1\le i\le n}$.
Let
$$t_{1,2}:= \tau(T(e_1)e_2 )~ {\rm and } ~  t_{2,1}:= \tau(T(e_2) e_1   ) .$$



 By Proposition \ref{prop:tppR},
we obtain that $\tau(T(e_1)e _1 ),$ $ \tau(T(e_2)e_2 )\in \mathbb{R}$.
We claim that
$$t_{i,j}=\overline{t_{j,i}}  $$
when
$i,j=1,2, \cdots, n, $ and  $i\ne j$.
Define $x_\theta:= e_1 + e^{i\theta }  e_2 $, $0 \le \theta \le 2\pi  $.
In particular,
$\tau(x_\theta x_\theta^* )= \tau(e_1+e _2 )\in \mathbb{R}$.
By Corollary \ref{prop:tUUR},
we obtain that $$\tau(T(x_\theta) x_\theta^*)\in \mathbb{R},$$
i.e., $e^{i\theta } t_{i,j} +e^{-i \theta }t_{j,i}\in \mathbb{R}$  for all $\theta $.
Hence,
\begin{align}\label{conju}
t_{i,j}=\overline{t_{j,i}}  .
\end{align}

By
\cite[Lemma 4]{Arazy85} (see also \cite{AC,FJ}),  there exists $1<n<\infty$,  $x=\sum_{k=1}^n x(k) e_k$, $y=\sum_{k=1}^n y (k) e_k   $
so that
\begin{enumerate}
  \item $x(k)\ge 0$, $y(k)\ge 0$ for all $k$;
  \item $\norm{x}_E =\norm{y}_{E^*} = \tau(xy^*)= 1 $;
  \item $x$ and $y$ are linearly independent.
\end{enumerate}
Let $\theta=(\theta_1,\cdots, \theta_n)$, $0\le \theta_k \le 2\pi $, we let
$$x_\theta =\sum_{k=1}^n e^{i\theta _k } x(k)e_k  $$
and
$$y_\theta =\sum_{k=1}^n e^{i\theta _k } y(k)e_k . $$
By the assumption on  $x$ and $y$, we obtain that
$$\norm{x_\theta}_E=\norm{y_\theta}_{E^*} =\tau(y_\theta^* x_\theta )\stackrel{(2)}{=}1. $$
Therefore, by the definition of a hermitian operator, for any $\theta$, we have
\begin{align*}
0&~= ~{\rm Im} \tau(y_\theta^* T(x_\theta)) \\
&~=~ {\rm Im} \sum_{k,l=1}^n e^{i(\theta_k -\theta_l)} x(k )y(l) t_{l,k}\\
 & \stackrel{\eqref{conju}}{=}
\frac{1}{2i} \sum_{k\ne  l} x(k )y(l) (e^{i(\theta_k -\theta_l)}t_{l,k}  -e^{i(\theta_l -\theta_k)}t_{k,l} ) \\
&~=~
\frac{1}{2i} \sum_{k\ne  l} x(k )y(l)  e^{i(\theta_k -\theta_l)}t_{l,k}  - \frac{1}{2i} \sum_{k\ne  l} x(k )y(l)  e^{i(\theta_l -\theta_k)}t_{k,l}    \\
&~=~
\frac{1}{2i} \sum_{k\ne  l} x(k )y(l)  e^{i(\theta_k -\theta_l)}t_{l,k}  - \frac{1}{2i} \sum_{k\ne  l} x(l )y(k )  e^{i(\theta_k -\theta_l)}t_{l,k}    \\
&~=~
\frac{1}{2i} \sum_{k\ne  l} e^{i(\theta_k -\theta_l)} t_{l,k} (x(k )y(l)  -x(l)y(k ) )
\end{align*}
This implies that $t_{l,k} (x(k )y(l)  -x(l)y(k ) )  =0$ for all $k$ and $l $.

Since $x,y$ are linearly  independent, it follows that there exists $k\ne l$ such that $x(k )y(l)  -x(l)y(k )\ne 0$ and thus $t_{l,k}=0$.
Replacing in this argument $x$ and $y$ by $x_\pi= \sum x_{k}e_{\pi(k)}$ and $y_{\pi}= \sum y_{k}e_{\pi(k)}$, respectively, where $\pi$ is an arbitrary permutation of $\{1,\cdots, n\}$, we deduce that $t_{k,l}=0$ for every $k\ne l$.
Therefore, we obtain that
$$\tau(T(p)q)=0$$
for
any   projections $p,q$ with $pq=0$ and $\tau(p)=\tau(q)=\tau(e_1)$.
 Clearly, if the space $(E(\cA),\norm{\cdot}_{E(\cM,\tau)})$ on the algebra generated by  $e_k$, $1\le k\le n$, is not proportional to $\norm{\cdot}_2$, then the norm $\norm{\cdot}_E$ on
 $E(\cA')$ on the algebra generated by
 $e_k'$, $1\le k\le 2n$, is not proportional to $\norm{\cdot}_2$ as well,
 where $\tau(e_k')=\frac12\tau(e_k)$.
This implies that for any $k\in \mathbb{N}$, if $p,q $ are $\tau$-finite projections such that $pq=0$ and  $\tau(p)=\tau(q)= \frac{1}{2^k }\tau(e_1)$, then
\begin{align}\label{Tpq0}
\tau(T(p)q)=0.
\end{align}

The case when  $\tau(p )\ne \tau( q )$ follows by standard approximation argument.
Indeed,
let    $p,q  $ be  $\tau$-finite projections with $pq=0$.
For any $\varepsilon>0$,  there exist two sets of   $\tau$-finite projections
 $\{p_i\}_{1\le i\le n}$ and $\{q_i\}_{1\le i\le m}$  such that $$\tau\left(p-\sum_{1\le i\le n}  p_i \right), \tau\left(q-\sum_{1\le i\le m}  q_i \right)\le \varepsilon $$ and
 $$ \tau(p_i)=\tau(q_j)=\frac{1}{2^k }\tau(e_1), ~ 1\le i\le n, ~ 1\le j\le m  $$ for some $k\in \mathbb{N}$.
Note that
 \begin{align*}
~&  \tau\left( T(p) q\right)  \\
  =~& \tau\left( T(p  ) \left(q -\sum_{1\le i\le m} q_i \right)\right) + \tau\left( T\left(p -\sum_{1\le i\le n} p_i\right) \sum_{1\le i\le m} q_i   \right) +  \tau\left( T\left(\sum_{1\le i\le n} p_i \right) \sum_{1\le i\le m} q_i\right)\\
   \stackrel{\eqref{Tpq0}}{=} & \tau\left( T(p  ) \left(q -\sum_{1\le i\le n} p _i \right)\right) + \tau\left( T\left(p  -\sum_{1\le i\le n} p_i\right) \sum_{1\le i\le m } q _i   \right).
   \end{align*}
   Since
$$
\left|\tau\left( T(p ) \left(q -\sum_{1\le i\le m} q_i \right)\right)\right|
\le
\tau\left( \left|T(p  ) \left(q -\sum_{1\le i\le m} q_i \right )\right| \right)= \norm{ T(p  ) \left(q -\sum_{1\le i\le m} q_i \right) }_1 \to 0 $$
as $\varepsilon\to 0$ (see e.g. \cite[Lemma 3.10]{DPS2016} and \cite{DP2012})
and
\begin{align*}
 \left|\tau\left( T\left(p  -\sum_{1\le i\le n} p_i\right) \sum_{1\le i\le m} q _i  \right )\right|
 &\le \norm{T}\norm{p  -\sum_{1\le i\le n} p_i}_E\norm{\sum_{1\le i \le m } q _i }_{E^\times }\\
 &\le \norm{T}\norm{p  -\sum_{1\le i\le n} p_i}_E\norm{\sum_{1\le i\le m } q  }_{E^\times }\to 0
 \end{align*}
 as $\varepsilon\to 0$ because the norm $\norm{\cdot}_E$
is order continuous, it follows that
\begin{align*}
\left| \tau\left( T\left(p\right) q\right)\right|=0  .
\end{align*}

The general case when $x_1$ and $x_2$ are partial isometries in $\cF(\tau )$ such that $l(x_1) \perp l(x_2)$ and $r(x_1)\perp r(x_2)$ is reduced to the just considered case  via the same argument as in
  Corollary \ref{prop:tUUR}. Therefore,  we obtain that 
 $$  \tau\left(T  ( x_1)  x_2 ^*  ) \right )=0  ,$$
  which completes the proof of the first assertion.

Now, we prove  the second assertion.
Since $E(\cM,\tau)\subset S_0(\cM,\tau)$,  there exist two sequences $\{y_n\}$ and $\{y_n'\}$ in $\cF(\tau)$ such that $0\le y_n \uparrow |x_1|$ and $0\le y_n' \uparrow |x_2|$ and $y_n$ (resp. $y_n'$) are generated by spectral projections of $|x_1|$ (resp. $|x_2|$).
Let $x_1=u_1 |x_1|$ and $ x_2 =u_2 |x_2|$ be the polar decompositions.
By the first assertion of the lemma, we obtain that
$  \tau(T  ( u_1y_n)  (u_2y_m')^*  )=0  $
for every $n$ and $m$.
Since $ u_1y_n \to_n  x_1$ in $\norm{\cdot}_E$, it follows that
$T  ( u_1y_n) \to T(x_1)$  in $\norm{\cdot}_E$, and therefore $T  ( u_1y_n) \to T(x_1)$ weakly. Hence,
$  \tau(T  (x_1)  (u_2y_m')^*  )=0  $ for each $m$.

Consider the special case when
 $x_2\in \cF(\tau)$.
 In this case, we may assume, in addition,  that $\norm{y_m'-x_2}_\infty \to 0$.
 We obtain that
\begin{align}\label{0F}
|\tau(T(x_1)   |x_2|  u_2^* )| =|\tau(T(x_1)r(x_2) (y_m '- |x_2|) u_2^* )|\le   \norm{T(x_1)r(x_2)    }_1 \norm{y_m '- |x_2|}_\infty \to 0.
\end{align}

For the general case, let $p:=E^{|x_2|}( \delta,\infty )$, $\delta>0$,  be a $\tau$-finite spectral  projection of $|x_2|$ such that $\norm{|x_2|({\bf 1}-p)}_{L_1\cap L_\infty }\le \varepsilon$.
Hence, we obtain that
\begin{align*}
|\tau(T(x_1) |x_2| u_2^*  )|&\stackrel{\eqref{0F}}{=}
 |\tau(T(x_1)  ({\bf 1}-p) |x_2| u_2^* )|\\
&~\le \norm{T(x_1)}_{L_1+L_\infty } \norm{ ({\bf 1}-p) |x_2| u_2^* }_{L_1\cap L_\infty } \\
&~\le  \norm{T(x_1)}_{L_1+L_\infty } \cdot \varepsilon.
\end{align*}
Since $\varepsilon$ is arbitrarily taken, it follows that $|\tau(T(x_1) x_2^*  )| =0$.
\end{proof}

The following result is a semifinite version of \cite[Corollary 1]{Sourour}.
\begin{cor}\label{cor:disjoint}
Let $E(\cM,\tau)$ be a symmetric   space having order continuous norm and $\norm{\cdot}_E$ is not proportional to $\norm{\cdot}_2$.
Let $T$ be a bounded hermitian operator on $E(\cM,\tau)$.
For any $x\in E(\cM,\tau )$,
there exist    $y,z\in E(\cM,\tau)$ such that $T(x)=y+z$ and $r(y)\le r(x)$ and $l(z)\le l(x)$.

\end{cor}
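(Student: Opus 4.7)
The plan is to set $p:=l(x)$ and $q:=r(x)$, and to reduce the statement to showing that the ``off-diagonal'' piece
$$w:=({\bf 1}-p)\,T(x)\,({\bf 1}-q)\in E(\cM,\tau)$$
vanishes. Indeed, once this is established, the decomposition
$$T(x)=T(x)q+p\,T(x)({\bf 1}-q)+({\bf 1}-p)T(x)({\bf 1}-q)=T(x)q+p\,T(x)({\bf 1}-q)$$
immediately gives $y:=T(x)r(x)$ (with $r(y)\le r(x)$) and $z:=l(x)T(x)({\bf 1}-r(x))$ (with $l(z)\le l(x)$), as required.

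To prove $w=0$, I will test $w$ against $\cF(\tau)$ using the trace pairing. For any $y\in\cF(\tau)$, using the tracial property,
$$\tau(wy)=\tau\bigl(({\bf 1}-p)T(x)({\bf 1}-q)y\bigr)=\tau\bigl(T(x)\cdot({\bf 1}-q)y({\bf 1}-p)\bigr).$$
Set $x_2:=\bigl(({\bf 1}-q)y({\bf 1}-p)\bigr)^*\in\cF(\tau)\subset L_1(\cM,\tau)\cap\cM$. Then $l(x_2)\le{\bf 1}-p$ and $r(x_2)\le{\bf 1}-q$, so $l(x_2)\perp l(x)=p$ and $r(x_2)\perp r(x)=q$. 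Applying the second assertion of Lemma~\ref{lemma:orthogonal} with $x_1:=x$ yields $\tau(T(x)x_2^*)=0$, hence $\tau(wy)=0$ for every $y\in\cF(\tau)$.

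It remains to infer $w=0$ from the vanishing of $\tau(w\cdot)$ on $\cF(\tau)$. Since $\norm{\cdot}_E$ is order continuous, $w\in E(\cM,\tau)\subset S_0(\cM,\tau)$, so its polar decomposition $w=u|w|$ has $|w|\in S_0(\cM,\tau)$. For every interval $[a,b]\subset(0,\infty)$ the spectral projection $e:=e^{|w|}([a,b])$ is $\tau$-finite, hence $y:=eu^*\in\cF(\tau)$, and
$$0=\tau(wy)=\tau(u|w|eu^*)=\tau(|w|e)=\int_{a}^{b}t\,d\tau(e^{|w|}(t)).$$
Since this holds for all $0<a<b$, the measure $d\tau(e^{|w|}(t))$ vanishes on $(0,\infty)$, whence $|w|=0$ and $w=0$.

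The argument is essentially clean once the correct decomposition of $T(x)$ is written down; the only real subtlety is recognising that the second assertion of Lemma~\ref{lemma:orthogonal} does not require $x_2$ to be a partial isometry, so we can feed it the arbitrary corner $({\bf 1}-q)y({\bf 1}-p)$ directly without further polar or spectral reduction. The density/testing step at the end is standard thanks to order continuity of $\norm{\cdot}_E$, which forces $E(\cM,\tau)\subset S_0(\cM,\tau)$ and supplies enough $\tau$-finite spectral projections of $|w|$.
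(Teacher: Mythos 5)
Your proof is correct and follows essentially the same route as the paper: both decompose $T(x)$ into corners and show that the $(l(x)^\perp , r(x)^\perp)$-corner vanishes by pairing it against $\tau$-finite elements supported in that corner and invoking the second assertion of Lemma~\ref{lemma:orthogonal} (which, as you note, does not require $x_2$ to be a partial isometry). The only difference is cosmetic: the paper derives a contradiction from a single positive witness produced via a polar decomposition, whereas you test against all of $\cF(\tau)$ and conclude with spectral projections of $|w|$; both steps are valid.
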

\begin{proof}
Denote $A:=T(x) $. Note that
$$
A=l(x)Ar(x)+  l(x) A r(x)^\perp   + l(x)^\perp  Ar(x) +(l(x)^\perp A r(x)^\perp ).
$$
Assume that $l(x)^\perp A r(x)^\perp\ne 0$.
Let
$p\in \cF(\tau )$ be a $\tau$-finite projection such that
  $z= l(x)^\perp A r(x)^\perp p \ne 0$.
Let $zp =u |zp|$ be the polar decomposition.
Then,
$$
u^*l(x)^\perp A r(x)^\perp p= u^* z p \ge 0,
$$
i.e.,
$$\tau  (T(x ) r(x)^\perp pu^* l(x)^\perp  )= \tau  (u^*l(x)^\perp A r(x)^\perp p ) > 0 $$
Note that $l\left(l\left(x\right)^\perp u p r(x)^\perp \right) \perp l(x) $  and $r\left(l(x)^\perp u p r(x)^\perp \right) \perp r(x) $.
By Lemma \ref{lemma:orthogonal} above, we obtain that
$$\tau  (u^*l(x)^\perp A r(x)^\perp p )=\tau  ( T(x ) r(x)^\perp p u^*l(x)^\perp )=  0 ,$$
 which is a contradiction.
 Taking $y=l(x) Ar(x) +l(x) ^\perp A r(x)$ and $z= l(x) A r(x)^\perp$, we complete the proof (note that the choices of $y$ and $z$ are not necessarily unique).
\end{proof}


The following lemma shows that any bounded  hermitian operator $T$ on $E(\cM,\tau)$ (whose norm is not proportional to $\norm{\cdot}_2$) maps the set of all  $\tau$-finite projections to a uniformly  bounded set in $\cM$, which should be compared with the estimates in \cite[Remark 2.5]{Sukochev}.
\begin{lemma}\label{lemmaptop}
Let $E(\cM,\tau )$
is an arbitrary symmetric operator space having order continuous norm.
Assume that $\norm{\cdot}_E$ is not proportional to $\norm{\cdot}_2$.
Let $T$ be a bounded hermitian operator on  $E(\cM,\tau)$.
Then, $\norm{T(p)}_\infty \le 3 \norm{T}$ for any $\tau$-finite projection $p\in \cP(\cM)$.
\end{lemma}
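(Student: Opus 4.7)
My plan is to decompose $T(p)$ by Corollary~\ref{cor:disjoint} applied with $x=p$ (so $l(p)=r(p)=p$) as $T(p) = A + B + C$, where $A := pT(p)p$, $B := (1-p)T(p)p$, $C := pT(p)(1-p)$, while the fourth corner $(1-p)T(p)(1-p)$ vanishes. The triangle inequality for $\|\cdot\|_\infty$ then reduces the lemma to showing that each of $\|A\|_\infty$, $\|B\|_\infty$, $\|C\|_\infty$ is bounded by $\|T\|$, summing to the asserted $\|T(p)\|_\infty \le 3\|T\|$.

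For the off-diagonal block $B$, I would take the polar decomposition $B=u|B|$, so $u^*u = r(B)\le p$ and $uu^* = l(B)\le 1-p$. Given $\varepsilon>0$, pick a $\tau$-finite spectral projection $e \le u^*u$ of $|B|$ with $|B|e \ge (\|B\|_\infty-\varepsilon)e$ (available because $T(p)\in E(\cM,\tau)\subset S_0(\cM,\tau)$). Set $v := ue$; then $v^*v = e$, $vv^* = ueu^*\le 1-p$, and $pv = 0$, $vp = v$, $pv^*=v^*$, $v^*p=0$, so a direct trace computation gives
\[
\tau(T(p)\,v^*) \;=\; \tau(|B|\,e) \;\ge\; (\|B\|_\infty-\varepsilon)\,\tau(e).
\]
The crucial trick is to split $T(p) = T(e) + T(p-e)$ and apply Lemma~\ref{lemma:orthogonal} to the partial isometries $p-e$ and $v$, whose left/right supports are pairwise orthogonal since $(p-e)\le p\perp ueu^* = l(v)$ and $(p-e)\perp e = r(v)$; this kills the second term, yielding $\tau(T(p)v^*) = \tau(T(e)v^*)$. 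Combined with the K\"othe-dual identity $\|v^*\|_{E^{\times}} = \tau(e)/\|e\|_E$ from Proposition~\ref{prop:tppR}, I get $|\tau(T(e)v^*)| \le \|T(e)\|_E\,\|v^*\|_{E^\times} \le \|T\|\,\|e\|_E\cdot\tau(e)/\|e\|_E = \|T\|\,\tau(e)$, and $\varepsilon\to 0$ delivers $\|B\|_\infty \le \|T\|$. The symmetric bound $\|C\|_\infty \le \|T\|$ follows by applying the same argument to the hermitian operator $T^{\sharp}(x):=T(x^*)^*$ (which satisfies $\|T^\sharp\|=\|T\|$ and for which $C^* = (1-p)T^\sharp(p)p$ has exactly the form of $B$).

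For the diagonal piece $A$, I plan to compress $T$ to the finite corner: the map $\widetilde T(x) := pT(pxp)p$ is hermitian on $E(p\cM p,\tau|_{p\cM p})$ with $\|\widetilde T\| \le \|T\|$ (any support functional on $pE(\cM,\tau)p$ extends to $E(\cM,\tau)$ via $f\mapsto f(p\cdot p)$, preserving hermiticity), and $\widetilde T(p)=A$. I will first establish that $A$ is self-adjoint. For any projection $q\le p$, the self-adjoint unitary $u_q := p-2q$ lies in $\cF(\tau)$, so Corollary~\ref{prop:tUUR} gives $\tau(\widetilde T(u_q)u_q)\in\mathbb{R}$; expansion produces $\tau(A) - 2\tau(Aq) - 2\tau(\widetilde T(q)) + 4\tau(\widetilde T(q)q) \in \mathbb{R}$. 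Using Proposition~\ref{prop:tppR} and Lemma~\ref{lemma:orthogonal} (the latter to see $\tau(\widetilde T(q)) = \tau(\widetilde T(q)q)\in\mathbb{R}$), all terms other than $-2\tau(Aq)$ are real, hence $\tau(Aq)\in\mathbb{R}$ for every projection $q\le p$. By linearity and norm approximation (legitimate because $A\in L_1(p\cM p,\tau|_{p\cM p})$ in the finite-trace setting), $\tau(Ab)\in\mathbb{R}$ for every self-adjoint $b\in p\cM p$. Writing $A = A_1 + iA_2$ with $A_j$ self-adjoint and testing against $b := A_2$ gives $\tau(A_2^2)=0$, so $A_2 = 0$ and $A$ is self-adjoint. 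To conclude, I pick a $\tau$-finite spectral projection $e\le p$ of $A$ with $Ae\ge (\|A\|_\infty-\varepsilon)e$ (or the analogous negative variant), compute $\tau(Ae)=\tau(T(p)e)$, split $T(p)=T(e)+T(p-e)$ and invoke Lemma~\ref{lemma:orthogonal} once more to drop the second term, then apply Proposition~\ref{prop:tppR} to bound $|\tau(T(e)e)|\le \|T\|\tau(e)$, yielding $\|A\|_\infty \le \|T\|$.

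The principal obstacle is the diagonal bound: the straight $E$--$E^{\times}$ duality estimate for $\tau(Ae)$ only produces $\|A\|_\infty \le \|T\|\,\|p\|_E/\|e\|_E$, which diverges as $\tau(e)\to 0$, so the hermitian condition must be exploited genuinely. The cleanest route I see is the self-adjoint-unitary test with $u_q = p-2q$ to force self-adjointness of $A$, after which the spectral-projection-plus-Lemma~\ref{lemma:orthogonal} argument closes. Once the three bounds are in hand, the triangle inequality delivers $\|T(p)\|_\infty \le 3\|T\|$.
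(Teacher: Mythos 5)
Your proof is correct, but it takes a genuinely different route from the paper's. Both arguments start from the corner decomposition supplied by Corollary~\ref{cor:disjoint}, and both must first establish that the diagonal corner $pT(p)p$ is self-adjoint, but the mechanisms for the actual norm bounds differ. The paper exploits the additivity of the decomposition $T(q)=A_qq+qB_q$ over orthogonal sub-projections of $p$ to derive the structural identities $p^\perp A_pq=p^\perp T(q)$ and $qT(p)q=qT(q)q$ for $q\le p$ (equations \eqref{Tpdec} and \eqref{Tpdec2}), and then obtains each corner bound by an $E$-norm contradiction: if a corner had uniform norm exceeding $\norm{T}$, a suitable spectral projection $q$ of that corner would give $\norm{T}\norm{q}_E<\norm{T(q)}_E\le\norm{T}\norm{q}_E$. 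You instead run a trace-duality argument: you test $T(p)$ against a partial isometry $v^*$ manufactured from a spectral projection $e$ of the corner, use Lemma~\ref{lemma:orthogonal} to replace $\tau(T(p)v^*)$ by $\tau(T(e)v^*)$, and close with the K\"othe-dual identity $\norm{e}_{E^\times}=\tau(e)/\norm{e}_E$ from Proposition~\ref{prop:tppR}; self-adjointness of the diagonal corner comes from expanding against the self-adjoint unitaries $p-2q$ via Corollary~\ref{prop:tUUR} rather than from the identity $qT(p)q=qT(q)q$. You correctly identify that the naive duality bound $\norm{T}\norm{p}_E/\norm{e}_E$ is useless and that the localization from $T(p)$ to $T(e)$ is the real content; the paper's version has the side benefit that \eqref{Tpdec} and \eqref{Tpdec2} are reused later (e.g.\ in Lemma~\ref{lemmaMtoM}), whereas yours leans more heavily on Lemma~\ref{lemma:orthogonal} and on the (true, standard) fact that $T^{\sharp}(x)=T(x^*)^*$ is again hermitian with the same norm.

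One point to phrase carefully: the hypothesis of Lemma~\ref{lemma:orthogonal} that the norm is not proportional to $\norm{\cdot}_2$ can fail on the compressed space $E(p\cM p)$ even when it holds globally (for instance, the norm of $L_1\cap L_2$ restricted to a corner with $\tau(p)\le 1$ is exactly $\norm{\cdot}_2$), so you should not apply Lemma~\ref{lemma:orthogonal} to $\widetilde T$ on the corner. This is harmless for your argument, because every trace identity you need can be read off from $T$ in the ambient algebra: $\tau(\widetilde T(q))-\tau(\widetilde T(q)q)=\tau(T(q)(p-q))=0$ by Lemma~\ref{lemma:orthogonal} applied to the orthogonal $\tau$-finite projections $q$ and $p-q$ of $\cM$, and $\tau(\widetilde T(p-2q)(p-2q))=\tau(T(p-2q)(p-2q)^*)\in\mathbb{R}$ by Corollary~\ref{prop:tUUR} applied to the partial isometry $p-2q\in\cF(\tau)$ (neither of Proposition~\ref{prop:tppR} nor Corollary~\ref{prop:tUUR} needs the non-proportionality assumption). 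With that reading, the argument is complete.
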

\begin{proof}

 Let  $p\in \cP(\cM)$ be an arbitrary    $\tau$-finite projection.
By Corollary \ref{cor:disjoint} above, we have
\begin{align}\label{eq:defTp}
T(p)=A_{p} p  +p B_{p }
\end{align}
for some $A_p,B_p\in E(\cM,\tau)$ with $r(A_p)\le p$ and $l(B_p)\le p$.
We note that the choices of $A_p$ and $B_p$ are not necessarily unique.
For two $\tau$-finite projections $q_1 ,q_2\in \cM$ such that $q_1q_2=0$ and   $q_1+q_2=p$, we have $$
T(p)=
T(q_1+q_2) = A_{q_1} q_1 +q_1 B_{q_1} +A_{q_2} q_2 +q_2 B_{q_2}  =
  (A_{q_1} q_1 +A_{q_2} q_2)p  +p (q_1 B_{q_1}  +q_2 B_{q_2}).$$
Therefore, we have \begin{align}\label{Tpdec}q_1T(q_1)q_1\stackrel{\eqref{eq:defTp}}{=} q_1A_{q_1}q_1 +q_1B_{q_1}q_1 = q_1T(p)q_1 \end{align}
and
 \begin{align}\label{Tpdec2}p^\perp A_p q_1\stackrel{\eqref{eq:defTp}}{=}   p^\perp T(p)q_1 & = p^\perp (A_{q_1} q_1 +A_{q_2}q_2)p q_1 = p^\perp A_{q_1} q_1 \nonumber  \\
 &=p^\perp \left( A_{q_1} q_1  +q_1 B_{q_1 } \right)\stackrel{\eqref{eq:defTp}}{=} p^\perp  T(q_1) . \end{align}

Consider the following decomposition (see Corollary \ref{cor:disjoint} or \eqref{eq:defTp})
$$T(p) = p^\perp  T(p)+ T(p)p^\perp +p T(p)p=p^\perp A_p p +p B_pp^\perp+pT(p)p.$$
We claim that $\norm{p^\perp A_p  }_\infty \le \norm{T}_{E\to E}$.
Assume by contradiction that $\norm{p^\perp A_p }_\infty > \norm{T}_{E\to E}$.
Then,
taking $q=E^{|p^\perp A_p|}(\norm{T},\infty)\ne 0$, we obtain that
$$\norm{T}  \norm{   q   }_E <   \norm{p^\perp A_ p    q    }_E \stackrel{\eqref{Tpdec2}}{=} \norm{p^\perp  T(q )      }_E\le \norm{ T( q)      }_E \le \norm{T }  \norm{ q}_E   ,$$
which is a contradiction.
Arguing similarly, $\norm{ B_p p^\perp }_\infty \le \norm{T}$.
Now, we aim to show that $\norm{pT(p) p}_\infty \le \norm{T}$.
Note that, by \eqref{eq:defTp}, we have
$$pT(p) p  =p (A_p+B_p)p. $$
 Let $C:= p(A_p+B_p)p $.
We claim that $C$ is self-adjoint.
Indeed, assume that $p (A_p+B_p)p =a+ib$, where $a,b$ are non-zero self-adjoint operators in $E(\cM,\tau)$ and $r(a),r(b)\le p$.
Let $q:= E^{b}(0,\infty)\le p$  (or $q:=E^{b }(-\infty,0)$ if $E^{b }(0,\infty)=0$) so that $qbq\ne 0$, which is  a $\tau$-finite projection.
  We obtain that
  $$\tau(T(q)q)=\tau(qT(q)q)\stackrel{\eqref{Tpdec}}{=}   \tau(qp T(p)pq)\stackrel{\eqref{eq:defTp}}{=}  \tau(q p(A_p+B_p)p q)=\tau(qaq+iqbq)\notin \mathbb{R} ,$$
   which contradicts  Proposition \ref{prop:tppR}.
  Hence, $b=0$.
  This implies that $pT(p)p$ is self-adjoint.
Recall that
for any $q\le p$, we have
 $$q T(p) q \stackrel{\eqref{Tpdec}}{=} q T(q)q.  $$
Let $q:= E^{|pT(p)p |}[\norm{T}+\varepsilon,\infty)$, $\varepsilon>0$.
If $q\ne 0$, then we have
 $$
\left(\norm{T}+\varepsilon\right) \norm{q}_E =\norm{ \left(\norm{T}+\varepsilon\right)   q}_E \le \norm{ p T(p)p q}_E  =\norm{qT(p)q}_E  =\norm{q T( q)p }_E \le \norm{T(  q)}_E\le \norm{T} \norm{q}_E,
 $$
 which is a contradiction.
 Hence, $E^{|pT(p)p |}[\norm{T}+\varepsilon,\infty)=0$ for any $\varepsilon>0$.
 This implies that $\norm{pT(p)p}_\infty \le \norm{T}$.

 Combining the estimates  $\norm{p^\perp A_p  }_\infty \le \norm{T} $, $\norm{ B_p p^\perp }_\infty \le \norm{T}$  and  $\norm{pT(p)p}_\infty \le \norm{T}$, we obtain that
  $$\norm{T(p)}_\infty \le 3 \norm{T}$$ for any $\tau$-finite projection $p$.
In particular, $A_p$ and $B_p$ can be taken such that $\norm{A_p}_\infty,\norm{B_p}_\infty \le 3\norm{T} $ (see the proof for  Corollary \ref{cor:disjoint}).
\end{proof}

The following lemma is the key auxiliary tool in the proof of Proposition \ref{prop:general:H} below, which shows that any bounded hermitian operator $T$ on $E(\cM,\tau)$ is  a bounded operator from $(\cF(\tau),
\norm{\cdot}_\infty)$  into $(C_0(\cM,\tau),
\norm{\cdot}_\infty)$.
By applying the generalized Gleason theorem\cite[Theorem 5.2.4]{Hamhalter} (see also \cite{BJM,Mori,GS1,GS2} for related results), we succeed to prove all but one case
 in the following lemma.
However, one should note that the case when   a von Neumann algebra has $I_2$ direct summand is  exceptional, which can not be covered by the generalized  Gleason theorem. This special case  is rather complicated  and requires
careful study of the restriction of a  hermitian operator on the type $I_2$ summand.

We denote by $C_0(\cM,\tau)$ the closure in the norm $\norm{\cdot}_\infty $ of the linear span of $\tau$-finite projections in $\cM $.
Equivalently, $C_0(\cM,\tau)=\{a\in S(\cM,\tau): \mu(a)\in L_\infty (0,\infty ), ~\mu(\infty,a)=0\}$\cite[Lemma 2.6.9]{LSZ}.

\begin{lemma}\label{lemmaMtoM}
Let $E(\cM,\tau )$
is an arbitrary symmetric operator space having order continuous norm.
Assume that $\norm{\cdot}_E$ is not proportional to $\norm{\cdot}_2$.
Let $T$ be a bounded  hermitian operator on  $E(\cM,\tau)$.
Then, $T$ is a bounded operator from $(\cF(\tau),
\norm{\cdot}_\infty)$  into $(C_0(\cM,\tau) ,
\norm{\cdot}_\infty)$.
In particular, $T$ extends to  a bounded operator from  $C_0(\cM,\tau)$ into $C_0(\cM,\tau)$.
\end{lemma}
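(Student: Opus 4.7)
The plan is to upgrade the pointwise bound $\|T(p)\|_\infty\le 3\|T\|$ from Lemma~\ref{lemmaptop} to a uniform $\|\cdot\|_\infty$-bound on each $\tau$-finite corner $\cM_e:=e\cM e$, via the generalized (Mackey--)Gleason theorem, and then close up by density. Fix a $\tau$-finite projection $e\in\cP(\cM)$ and consider the set function $\phi_e:\cP(\cM_e)\to\cM$ given by $\phi_e(p):=T(p)$. By Lemma~\ref{lemmaptop} one has $\|\phi_e(p)\|_\infty\le 3\|T\|$, while the linearity of $T$ gives $\phi_e(p+q)=\phi_e(p)+\phi_e(q)$ for orthogonal $p,q\in\cM_e$; thus $\phi_e$ is a bounded, finitely additive, operator-valued measure on $\cP(\cM_e)$. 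Provided $\cM_e$ contains no type $I_2$ direct summand, the generalized Gleason theorem \cite[Theorem~5.2.4]{Hamhalter} (in its operator-valued Mackey--Gleason form) supplies a unique bounded linear extension $\tilde T_e:\cM_e\to\cM$ with $\|\tilde T_e\|\le K\|T\|$ for an absolute constant $K$, and $\tilde T_e$ coincides with $T$ on the $*$-linear span of projections of $\cM_e$.

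To identify $\tilde T_e$ with $T$ on all of $\cM_e$, I would use the elementary observation that on $\cM_e$ (with $\tau(e)<\infty$) the $\|\cdot\|_\infty$-topology dominates the $\|\cdot\|_E$-topology: for $x\in\cM_e$ we have $l(x)\vee r(x)\le e$ and $|x|\le\|x\|_\infty e$, so $\mu(x)\le\|x\|_\infty\chi_{(0,\tau(e))}$ and therefore $\|x\|_E\le\|x\|_\infty\|\chi_{(0,\tau(e))}\|_E$. Since $T$ is $\|\cdot\|_E$-continuous, $T|_{\cM_e}$ is thus $\|\cdot\|_\infty$-continuous on $\cM_e$; as the linear span of projections is $\|\cdot\|_\infty$-dense in the finite algebra $\cM_e$, this forces $T=\tilde T_e$ on $\cM_e$, whence $\|T(x)\|_\infty\le K\|T\|\|x\|_\infty$ for every $x\in\cM_e$. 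Given an arbitrary $x\in\cF(\tau)$, taking $e:=l(x)\vee r(x)$ embeds $x$ into $\cM_e$ and delivers the uniform bound; moreover, $T(x)\in E(\cM,\tau)\cap\cM\subset S_0(\cM,\tau)\cap\cM=C_0(\cM,\tau)$. Density of $\cF(\tau)$ in $(C_0(\cM,\tau),\|\cdot\|_\infty)$ then extends $T$ to a bounded operator on all of $C_0(\cM,\tau)$.

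The main obstacle, as flagged before the statement, is the exceptional case in which $\cM_e$ (for some $\tau$-finite $e$) contains a type $I_2$ direct summand, since the generalized Gleason theorem does not apply there. For this remaining piece I would split off the $I_2$ summand and treat it by hand: exploiting its concrete structure $M_2(\mathbb{C})\otimes Z$ with $Z$ abelian, one can combine Proposition~\ref{prop:tppR} (applied to diagonal matrix units of the $M_2$-factor) with the disjoint-support vanishing of $\tau(T(x_1)x_2^*)$ from Lemma~\ref{lemma:orthogonal} (applied to the off-diagonal matrix units) to pin down the action of $T$ on the $I_2$ corner in terms of left- and right-multiplications, whence an explicit $\|\cdot\|_\infty$-bound follows. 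Combining this analysis with the Gleason-based argument on the non-$I_2$ part yields the full statement.
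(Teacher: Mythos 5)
Your proposal is correct and follows essentially the same route as the paper: the uniform bound $\norm{T(p)}_\infty\le 3\norm{T}$ from Lemma~\ref{lemmaptop} feeds the generalized Gleason theorem on the non-$I_2$ part, the resulting extension is identified with $T$ by density of the span of projections (the paper makes this identification rigorous by passing both $\norm{\cdot}_E$- and $\norm{\cdot}_\infty$-limits into the common Hausdorff measure topology, a one-line point your sketch leaves implicit), and the type $I_2$ summand $\mathbb{M}_2\overline{\otimes}\cA$ is handled by hand via Corollary~\ref{cor:disjoint} and the matrix-unit computations, exactly as in the paper.
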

\begin{proof}
There exists a decomposition $\cM = \cM_1 \oplus \cM_2$, where $\cM_1$ has no type $I_2$ direct summand and $\cM_2$ is either $0$ or the  type $I_2$ direct summand of $\cM$ (see e.g. \cite[Chapter III.1.5.12]{Blackadar}).

By Lemma \ref{lemmaptop},   $\norm{T(p)}_\infty\le 3\norm{T}$ for any $\tau$-finite projection $p\in \cM_1$.
Moreover, $T(p) =A_p+B_p$ (see Corollary \ref{cor:disjoint}) for some $A_p,B_p\in E(\cM,\tau)$ with $r(A_p)\le p$ and $l(B_p)\le p$.
Since $p\le {\bf 1}_{\cM_1}$, it follows that $A_p =A_p {\bf 1}_{\cM_1}\in \cM_1$ and $B_p = {\bf 1}_{\cM_1} B_p\in \cM_1$.
On the other hand, $\tau(p)<\infty $  implies that  $T(p)\in \cF(\cM_1)\subset C_0(\cM_1,\tau)\subset C_0(\cM,\tau)$.
It follows from \cite[Theorem 5.2.4]{Hamhalter} that for any $\tau$-finite projection $p$,  $T|_{\cP_f(\cM_1 )}$
extends uniquely to a bounded linear operator from the  reduced algebra $p\cM p$ into $C_0(\cM,\tau)$.
 We denote this operator by $R_p$.
 Moreover, $$\norm{R_p|_{p\cM_1  p}}_{\norm{\cdot}_\infty \to \norm{\cdot}_\infty } \le 12 \norm{T}$$(see the proof of\cite[Theorem 5.2.4]{Hamhalter}).
 Moreover, by the uniqueness of the  extension $R_p$\cite[Theorem 5.2.4]{Hamhalter}, we obtain that if $p\ge q$, then the extension $R_p$ of $T$  coincides with $R_q$ on $q\cM_1q$.

 We claim that $R_p$ coincide with $T$ on $p \cM_1 p$.
 Indeed, let $x$ be a positive operator in $p\cM_1  p$.
 Then, there exists a sequence of positive operators $x_n$
whose singular values are step functions such that $x_n\uparrow x$ and $\norm{x_n-x}_\infty \to 0$.
Since $R_p$ coincides with $T$ on all projection $q\le p$, it follows that $R_p(x_n)=T(x_n)$.
Since $E(\cM_1 ,\tau)$ has order continuous norm, it follows that $T(x_n)\to T(x)$ in $\norm{\cdot}_E$, and therefore, in the measure topology\cite[Proposition 20]{DP2014}.
On the other hand,
the $\norm{\cdot}_\infty$-$\norm{\cdot}_\infty$-boundedness of $R_p$ implies that $R_p(x_n)\to R_p(x)$ in $\norm{\cdot}_\infty$, and therefore, in the measure topology\cite[Proposition 20]{DP2014}.
Hence, $T(x)=R_p(x)$.
Since $p$ is an arbitrary $\tau$-finite projection, it follows that $T$ is a bounded linear  operator from $(\cF(\tau)\cap \cM_1,
\norm{\cdot}_\infty)$  into $(  C_0(\cM_1 ,\tau),
\norm{\cdot}_\infty)$.

Now, we consider the case when $\cM_2$ is   a non-vanishing  type $I_2$ von Neumann direct summand (if $\cM_2=0$, then the lemma follows from the above result).
It is known that $\cM_2$ can be written as $\mathbb{M}_2 \overline{\otimes} \cA $, where $\mathbb{M}_2$ is the algebra of all $2\otimes 2$ matrices and $\cA$ is a $\sigma$-finite commutative von Neumann algebra  (see e.g. \cite{Tak,KR} or \cite[Chapter III.1.5.12]{Blackadar}).
For every element in the form of
$ \left(
    \begin{array}{cc}
      p & 0 \\
      0 & 0 \\
    \end{array}
  \right)
 $, where $p$ is a projection in $\cA$ such that $\tau({\bf 1}\otimes p)<\infty $, we have (see Corollary \ref{cor:disjoint})
 $$T\left(
    \begin{array}{cc}
      p & 0 \\
      0 & 0 \\
    \end{array}
  \right)=  A_p \left(
    \begin{array}{cc}
      p & 0 \\
      0 & 0 \\
    \end{array}
  \right)+\left(
    \begin{array}{cc}
     p & 0 \\
      0 & 0 \\
    \end{array}
  \right)B_p.$$
By Lemma \ref{lemmaptop}, $A_p$ and $B_p$ are uniformly bounded. Assume that
$ A_p= \left(
    \begin{array}{cc}
      a_1 & a_2 \\
      a_3 & a_4 \\
    \end{array}
  \right)$ and $ B_p= \left(
    \begin{array}{cc}
      b_1 & b_2 \\
      b_3 & b_4 \\
    \end{array}
  \right)$.
Without loss of generality, we may assume, in addition, that   $a_2,a_4,b_3,b_4$ are  $0$.
  Hence,
 $$T\left(
    \begin{array}{cc}
      p & 0 \\
      0 & 0 \\
    \end{array}
  \right)=   \left( \begin{array}{cc}
      a_1 & 0 \\
      a_3 & 0 \\
    \end{array} \right)\left(
    \begin{array}{cc}
      p & 0 \\
      0 & 0 \\
    \end{array}
  \right)+\left(
    \begin{array}{cc}
      p & 0 \\
      0 & 0 \\
    \end{array}
  \right)  \left( \begin{array}{cc}
      b_1 & b_2 \\
      0 & 0 \\
    \end{array} \right)=   \left( \begin{array}{cc}
      (a_1+b_1)p  & b_2 p \\
      a_3 p & 0 \\
    \end{array} \right) .$$
  Recall that $\norm{A_p}_\infty ,\norm{B_p}_\infty \le 3\norm{T}$ (see the proof of Lemma \ref{lemmaptop}).
  We obtain that $a_1,a_3,b_1,b_2\le 3\norm{T}$.
  Hence, for any   $x\in \cA$ whose singular value function is a step function, we have
  $$ T\left(
    \begin{array}{cc}
     x & 0 \\
      0 & 0 \\
    \end{array}
  \right)= \left(
    \begin{array}{cc}
    h_1 x &  h_2 x \\
     h_3 x & 0 \\
    \end{array}
  \right),$$
 where $\norm{h_1}_\infty ,\norm{h_2}_\infty ,\norm{h_3}_\infty \le 6\norm{T}$.
 Therefore, $\norm{T\left(
    \begin{array}{cc}
     x & 0 \\
      0 & 0 \\
    \end{array}
  \right)}_\infty \le 12 \norm{T}\norm{x}_\infty $.
 For any self-adjoint  $x\in \cF(\tau)$ there exists a sequence of self-adjoint elements  $x_n\in \cA$ such that $|x_n|\uparrow |x|$,  $\norm{x_n-x}_\infty \to 0$ and   $\mu(x_n)$ are step functions.
 Since $E(\cM,\tau )$ has order continuous norm, it follows that $$\norm{T\left(
    \begin{array}{cc}
     x_n & 0 \\
      0 & 0 \\
    \end{array}
  \right) -T\left(
    \begin{array}{cc}
     x & 0 \\
      0 & 0 \\
    \end{array}
  \right)}_E \to 0 $$
and therefore,  $T\left(
    \begin{array}{cc}
     x_n & 0 \\
      0 & 0 \\
    \end{array}
  \right) \to T\left(
    \begin{array}{cc}
     x & 0 \\
      0 & 0 \\
    \end{array}
  \right)$ in measure \cite[Proposition 20]{DP2014}.
On the other hand,
 $\norm{T\left(
    \begin{array}{cc}
     x_n  & 0 \\
      0 & 0 \\
    \end{array}
  \right)}_\infty \le 12 \norm{T}\norm{x_n }_\infty\le 12 \norm{T}\norm{x}_\infty $.
  Since the unit ball of  $\cM_2$ is closed in the measure topology\cite[Theorem 32]{DP2014}, we obtain that
   $$\norm{T\left(
    \begin{array}{cc}
     x  & 0 \\
      0 & 0 \\
    \end{array}
  \right)}_\infty \le 12 \norm{T}\norm{x}_\infty $$
   for any self-adjoint operator $\left(
    \begin{array}{cc}
     x & 0 \\
      0 & 0 \\
    \end{array}
  \right)\in \cF(\tau)\cap \cM_2$.
  Since every element in $\cF(\tau)\cap \cM_2$ is the combination of  two self-adjoint elements, we obtain that $$\norm{T\left(
    \begin{array}{cc}
     x & 0 \\
      0 & 0 \\
    \end{array}
  \right)}_\infty \lesssim \norm{T}\norm{x}_\infty $$ for any operator $\left(
    \begin{array}{cc}
     x & 0 \\
      0 & 0 \\
    \end{array}
  \right)\in \cF(\tau)\cap \cM_2 $.
 The same argument show that
 $\norm{T\left(
    \begin{array}{cc}
     0& 0 \\
      0 & x \\
    \end{array}
  \right)}_\infty \lesssim \norm{T}\norm{x}_\infty $.
 For estimates of $\norm{T\left(
    \begin{array}{cc}
     0& 0 \\
      x & 0\\
    \end{array}
  \right)}_\infty$ and $\norm{T\left(
    \begin{array}{cc}
     0& x \\
      0 & 0\\
    \end{array}
  \right)}_\infty$, one only need to note that
  $$T'(\cdot ):= \left( \left(
    \begin{array}{cc}
     0& {\bf 1_\cA}   \\
      {\bf 1_\cA}  & 0\\
    \end{array}
  \right) + {\bf 1_{\cM_1}}\right) T  \left( \left( \left(
    \begin{array}{cc}
     0& {\bf 1_\cA}   \\
      {\bf 1_\cA}  & 0\\
    \end{array}
  \right) + {\bf 1_{\cM_1}}\right) \cdot \right) $$
 is also a hermitian operator on $E(\cM,\tau)$ and $$T'\left(
    \begin{array}{cc}
     x & 0 \\
      0 & 0 \\
    \end{array}
  \right)
  =T\left(
    \begin{array}{cc}
    0 & 0 \\
      x & 0 \\
    \end{array}
  \right).
  $$
 By taking the linear combination, we obtain that  for any $x\in \cF(\tau)\cap \cM_2$, $$\norm{T(x)}_\infty \lesssim \norm{T}\norm{x}_\infty .$$
This completes the proof.
\end{proof}

We prove below an analogue of Proposition \ref{prop:tppR}  for the
symmetric space $C_0(\cM,\tau)$.

\begin{proposition}\label{redu}
 Let $E(\cM,\tau )$
is an arbitrary symmetric operator space having order continuous norm.
Assume that $\norm{\cdot}_E$ is not proportional to $\norm{\cdot}_2$.
 Let $T$ be a bounded  hermitian operator on    $E(\cM,\tau)$.
Then, for any operator $x\in C_0(\cM,\tau)$ and
 a $\tau$-finite projection  $p\in \cP(\cM)$ commuting with $|x|$,
we have
$$\langle Tx ,  pu^* \rangle_{(C_0(\cM,\tau ),C_0(\cM,\tau )^*)}:=\tau(T(x) pu^*  )\in \mathbb{R}, $$
where $x=u|x|$ is the polar decomposition.
\end{proposition}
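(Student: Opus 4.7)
The plan is to reduce the assertion to Corollary~\ref{prop:tUUR} via an approximation in $\|\cdot\|_\infty$ together with an orthogonal-support decomposition tailored to the polar decomposition of $x$. By Lemma~\ref{lemmaMtoM}, $T$ extends to a bounded operator on $(C_0(\cM,\tau),\|\cdot\|_\infty)$, and $pu^{\ast}$ has $\tau$-finite left support $p$, so $pu^{\ast}\in L_1(\cM,\tau)\cap\cM$. Consequently, if $x_n\to x$ in $\|\cdot\|_\infty$ in $C_0(\cM,\tau)$, then $\tau(T(x_n)pu^{\ast})\to\tau(T(x)pu^{\ast})$, so it suffices to prove the claim for a dense class of $x$'s.

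First I would choose the approximation: since $|x|\in C_0(\cM,\tau)$, the spectral projections $q_k^{(n)}:=E^{|x|}(\alpha_{k+1}^{(n)},\alpha_k^{(n)}]$ are $\tau$-finite for $\alpha_{k}^{(n)}>0$, and $|x|_n:=\sum_k \alpha_k^{(n)}q_k^{(n)}\to |x|$ uniformly with $\alpha_k^{(n)}\in\mathbb{R}$. Because $p$ commutes with $|x|$, it commutes with every $q_k^{(n)}$. Setting $x_n:=u|x|_n=\sum_k \alpha_k^{(n)}\,uq_k^{(n)}$, we have $x_n\to x$ in $\|\cdot\|_\infty$ and $\tau(T(x_n)pu^{\ast})=\sum_k \alpha_k^{(n)}\,\tau(T(uq_k^{(n)})pu^{\ast})$. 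Since the $\alpha_k^{(n)}$ are real, it is enough to show that $\tau(T(uq)pu^{\ast})\in\mathbb{R}$ for each $\tau$-finite spectral projection $q$ of $|x|$.

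The core step is this final reduction, which I expect to be the main obstacle. Write $uq=uqp+uq(1-p)$ (using $qp=pq$) and $pu^{\ast}=qpu^{\ast}+(1-q)pu^{\ast}=(uqp)^{\ast}+(1-q)pu^{\ast}$. This gives
\begin{align*}
\tau(T(uq)pu^{\ast})
&=\tau(T(uqp)(uqp)^{\ast})+\tau(T(uqp)(1-q)pu^{\ast})\\
&\quad +\tau(T(uq(1-p))pu^{\ast}).
\end{align*}
For the cross terms, a direct computation using that $p$ commutes with $q$ and with $r(x)=u^{\ast}u$ shows that the pairs $(uq(1-p),\,up)$ and $(uqp,\,up(1-q))$ each have mutually orthogonal left supports and mutually orthogonal right supports; since both members of each pair lie in $\cF(\tau)\subset E(\cM,\tau)\cap L_1(\cM,\tau)\cap\cM$, the second assertion of Lemma~\ref{lemma:orthogonal} forces those two traces to vanish. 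The remaining term $\tau(T(uqp)(uqp)^{\ast})$ is real by Corollary~\ref{prop:tUUR}, since $uqp$ is a partial isometry in $\cF(\tau)$. Putting everything together yields $\tau(T(uq)pu^{\ast})\in\mathbb{R}$, and the approximation step completes the proof.
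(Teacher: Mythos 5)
Your proposal is correct and follows essentially the same route as the paper: approximate $x$ by $u$ times real step functions built from $\tau$-finite spectral projections of $|x|$ commuting with $p$, split each resulting term into a ``diagonal'' piece handled by Corollary~\ref{prop:tUUR} and cross terms killed by Lemma~\ref{lemma:orthogonal}, and pass to the limit using the $\norm{\cdot}_\infty$-boundedness of $T$ on $C_0(\cM,\tau)$ together with $pu^*\in L_1(\cM,\tau)$. The only cosmetic difference is that the paper writes out the self-adjoint case with Proposition~\ref{prop:tppR} and remarks that the general case follows by substituting Corollary~\ref{prop:tUUR}, whereas you carry out the general case directly.
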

\begin{proof}
We only consider the case when $x$ is positive. The
proof of  the general case follows from the same argument by replacing Proposition \ref{prop:tppR} used below with  Corollary \ref{prop:tUUR}.

Let $x_n:= \sum_{1\le k\le n} \alpha_k p_k \in \cF(\tau  ) $  be such that $x_n\to x$ in $\norm{\cdot}_\infty $, where $p_k$ are $\tau$-finite spectral projections of $x_n$ which commute    with $p$,  and $\alpha_k$ are real numbers. For each  $p_k$, we have
\begin{align*}
  & \langle Tp_k, p\rangle_{(C_0(\cM,\tau ),C_0(\cM,\tau )^*)}\\
= ~&\langle T(p p_k), p p_k\rangle_{(C_0(\cM,\tau ),C_0(\cM,\tau )^*)} +\langle T(pp_k), p-pp_k\rangle_{(C_0(\cM,\tau ),C_0(\cM,\tau )^*)}  \\
& ~+  \langle T(p_k -p p_k ), p  \rangle_{(C_0(\cM,\tau ),C_0(\cM,\tau )^*)} \\
\stackrel{\eqref{lemma:orthogonal} }{=}  & \langle T(p p_k), p p_k\rangle_{(C_0(\cM,\tau ),C_0(\cM,\tau )^*)}\\
~ = ~ &
\tau(T(p p_k)  p p_k)
\stackrel{\eqref{prop:tppR}}{  \in} \mathbb{R}.
\end{align*}
Hence, $\langle Tx_n , p\rangle_{(C_0(\cM,\tau ),C_0(\cM,\tau )^*)} \in \mathbb{R}$ for every $n$.
Moreover, we have
$$|\langle  Tx,p \rangle_{(C_0(\cM,\tau ),C_0(\cM,\tau )^*)} -\langle  Tx_n ,p \rangle_{(C_0(\cM,\tau ),C_0(\cM,\tau )^*)} | \le
\norm{T}_{C_0(\cM,\tau)\to C_0(\cM,\tau) }\norm{x-x_n}_\infty  \norm{p}_1\to 0,$$
which shows that $\langle Tx , p\rangle_{(C_0(\cM,\tau ),C_0(\cM,\tau )^*)}\in \mathbb{R}$.
\end{proof}

\begin{proposition}\label{prop:general:H}
Let $E(\cM,\tau )$
is an arbitrary symmetric operator space having order continuous norm.
Assume that $\norm{\cdot}_E$ is not proportional to $\norm{\cdot}_2$.
 Let $T$ be a bounded  hermitian operator on   $E(\cM,\tau)$.
Then, $T$ can be extended to a bounded operator on  $C_0(\cM,\tau)$ (still denoted by $T$) and, for any
  operator $x\in C_0(\cM,\tau )$,  there exists a support functional  $x'$ in $C_0(\cM,\tau)^*$ of $x$ such that
 $\langle Tx, x' \rangle \in \mathbb{R}$.
  In particular, $T$ is a hermitian operator on $C_0(\cM,\tau)$.
\end{proposition}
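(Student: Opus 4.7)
The plan is to first extend $T$ to $C_0(\cM,\tau)$ via density, and then, for each $x\in C_0(\cM,\tau)$, to produce a support functional $x'\in C_0(\cM,\tau)^*$ by a weak-$*$ compactness argument applied to explicit functionals built from the $\tau$-finite spectral data of $|x|$. Real-valuedness of $x'(Tx)$ will be inherited from Proposition \ref{redu}.

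First, Lemma \ref{lemmaMtoM} shows that $T$ is $\norm{\cdot}_\infty$-bounded on $\cF(\tau)$. Since $\cF(\tau)$ is $\norm{\cdot}_\infty$-dense in $C_0(\cM,\tau)$ by the very definition of the latter, $T$ extends uniquely to a bounded linear operator on the Banach space $(C_0(\cM,\tau),\norm{\cdot}_\infty)$, which I still denote by $T$.

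Next I fix $x\in C_0(\cM,\tau)$ with $x\ne 0$ and polar decomposition $x=u|x|$. Since $\mu(\infty;x)=0$, for each $\epsilon\in(0,\norm{x}_\infty)$ the spectral projection $p_\epsilon:=e^{|x|}(\norm{x}_\infty-\epsilon,\infty)$ is nonzero and $\tau$-finite, commutes with $|x|$, and satisfies $p_\epsilon\le r(x)=u^*u$. I set
$$\varphi_\epsilon(y):=\frac{\norm{x}_\infty}{\tau(p_\epsilon)}\,\tau(y\,p_\epsilon u^*),\qquad y\in C_0(\cM,\tau).$$
Since $p_\epsilon u^*\in L_1(\cM,\tau)\cap\cM$ with $\norm{p_\epsilon u^*}_1\le\tau(p_\epsilon)$, the estimate $\norm{\varphi_\epsilon}_{C_0^*}\le \norm{x}_\infty$ is immediate, while
$$\varphi_\epsilon(x)=\frac{\norm{x}_\infty}{\tau(p_\epsilon)}\,\tau(|x|p_\epsilon)\ge \norm{x}_\infty(\norm{x}_\infty-\epsilon).$$
Crucially, Proposition \ref{redu} applied to the pair $(x,p_\epsilon)$ delivers $\tau(T(x)\,p_\epsilon u^*)\in\mathbb{R}$, whence $\varphi_\epsilon(Tx)\in\mathbb{R}$.

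Finally, Banach--Alaoglu provides a weak-$*$ cluster point $x'$ of the net $\{\varphi_\epsilon\}_{\epsilon\downarrow 0}$ in the closed ball of radius $\norm{x}_\infty$ of $C_0(\cM,\tau)^*$. Because $\varphi_\epsilon(x)\to \norm{x}_\infty^2$ along every subnet, one has $x'(x)=\norm{x}_\infty^2$, which forces $\norm{x'}=\norm{x}_\infty$, so $x'$ is a support functional of $x$. Since $\mathbb{R}$ is closed in $\mathbb{C}$, the identities $\varphi_{\epsilon_\alpha}(Tx)\in\mathbb{R}$ pass to the limit, giving $x'(Tx)\in\mathbb{R}$. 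Choosing one such $x'$ for every $x\in C_0(\cM,\tau)$ (with $x'=0$ when $x=0$) produces a semi-inner-product on $C_0(\cM,\tau)$ consistent with $\norm{\cdot}_\infty$ for which the numerical range of $T$ is real; hence $T$ is hermitian on $C_0(\cM,\tau)$.

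The most delicate point is ensuring that the weak-$*$ limit $x'$ is a genuine support functional: a priori one only has $\norm{x'}\le\norm{x}_\infty$, and the matching lower bound must be forced by the quantitative estimate $\varphi_\epsilon(x)\to\norm{x}_\infty^2$. Apart from this, the only nontrivial input is Proposition \ref{redu}, whose applicability depends on $p_\epsilon$ being simultaneously $\tau$-finite and commuting with $|x|$—both of which are automatic for spectral projections at the top of the spectrum of $|x|$ once $x\in C_0(\cM,\tau)$.
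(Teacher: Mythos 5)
Your proposal is correct and follows essentially the same route as the paper's own proof: extend $T$ by $\norm{\cdot}_\infty$-density using Lemma \ref{lemmaMtoM}, build functionals from the top spectral projections $e^{|x|}(\norm{x}_\infty-\epsilon,\infty)$ of $|x|$ composed with $u^*$, apply Proposition \ref{redu} to get real values, and pass to a weak-$*$ cluster point via Banach--Alaoglu, with the quantitative lower bound $\varphi_\epsilon(x)\ge\norm{x}_\infty(\norm{x}_\infty-\epsilon)$ forcing the limit to be a genuine support functional. The only differences are cosmetic (the paper normalizes $\norm{x}_\infty=1$ and uses unit-norm functionals indexed by $n$ rather than your $\epsilon$-indexed net).
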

\begin{proof}
By Lemma \ref{lemmaMtoM}, $T$ can be extended to a bounded operator on  $C_0(\cM,\tau)$.

Without loss of generality, we assume, in addition, that $\norm{x}_\infty =1$.
Let $x=u|x|$ be the polar decomposition.
Recall that $x\in C_0(\cM,\tau)$.
Hence, $\tau(E^{|x|} (1-\frac1n, 1])<\infty $ for any $n>0$.
Recall that  $(C_0(\cM,\tau))^\times =L_1(\cM,\tau)$ (see e.g. \cite[Lemma 8]{SS14} and \cite[Theorem 53]{DP2014}).
Define $x_n = \frac{E^{|x|}(1-\frac1n, 1] }{\tau(E^{|x|} (1-\frac1n, 1]) } u^*\in L_1(\cM,\tau)\subset C_0(\cM,\tau)^*$, $n\ge 1$.
We have  $\norm{x_n }_{C_0(\cM,\tau)^*} =\norm{x_n}_1=1$ \cite[p.228]{DP2014}.   
Note that
\begin{align}\label{net1}
1-\frac1n\le  \tau(xx_n  )=\frac{\tau(|x| E^{|x|}(1-\frac1n, 1]) }{\tau(E^{|x|} (1-\frac1n, 1]) } \le 1.
\end{align}
By Alaoglu's theorem\cite[p.130, Theorem 3.1]{Conway}, there  exists a subnet $\{x_i\}$ of $\{x_n\}_n$ converging  to some element $x'\in C_0(\cM,\tau)^*$
in the weak$^*$ topology of   $C_0(\cM,\tau)^*$
 and $\norm{x'}_{C_0(\cM,\tau) ^* }\le 1$.
On the other hand, we have
$$\norm{x'}_{C_0(\cM,\tau)^*}= \norm{x}_{\infty } \norm{x'}_{C_0(\cM,\tau)^*}  \ge  x' ( x  )=\lim _i \tau(xx_i ) \stackrel{\eqref{net1}}{=}1. $$
Hence, $\norm{x'}_{C_0(\cM,\tau)^*}=1$.
This implies that $x'$ is a support functional of $x$.
Therefore,
by taking $p=E^{|x|} (1-\frac1n, 1] $ in   Proposition \ref{redu}, we obtain that
$$  \langle Tx, x' \rangle=(w^*)-\lim_i  \langle Tx, x_i  \rangle \in \mathbb{R }. $$
This completes the proof.
\end{proof}



Recall that a derivation $\delta$ on an algebra $\cA$ is a linear operator satisfying the Leibniz rule.
Although it is known that a  derivation from $C_0(\cM,\tau)$ into $C_0(\cM,\tau)$ is not necessarily inner\cite{BHLS,Huang} (see   \cite[Example 4.1.8]{Sakai} for  examples of    non-inner derivations   on $K(\cH)$), it is shown recently that every derivation $\delta$ from an arbitrary  von Neumann subalgebra of $\cM$ into $C_0(\cM,\tau)$ is inner, i.e., there exists an element  $a\in C_0(\cM,\tau)$ such that $\delta(\cdot)=[a,\cdot]$\cite{BHLS2,Huang}.
 On the other hand,
 every    derivation from $C_0(\cM,\tau)$ into $C_0(\cM,\tau)$ is spatial, i.e., it can be implemented by an element from $\cM$ (see e.g. \cite[Theorem 2]{Sakai} and \cite[Theorem 4.1]{Arveson}).

\begin{lemma}\label{derivation}Every derivation $\delta$ from $C_0(\cM,\tau)$ into $C_0(\cM,\tau)$ is spatial.
In particular, if $\delta$ is a $*$-derivation, then the element implementing $\delta$ can be chosen to be self-adjoint.
\end{lemma}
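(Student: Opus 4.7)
My plan is to reduce the first assertion to the classical Kadison--Sakai inner-derivation theorem by passing from the $C^*$-algebra $C_0(\cM,\tau)$ to its weak*-closure. First I would observe that $\delta$ is automatically norm-continuous by Sakai's automatic-continuity theorem for derivations of $C^*$-algebras, and that $\cF(\tau) \subset C_0(\cM,\tau)$ is weak*-dense in $\cM$ by semifiniteness of $\tau$, so $C_0(\cM,\tau)$ is a weak*-dense two-sided ideal of $\cM$ whose multiplier algebra can be identified with $\cM$. Invoking the results of Sakai and Arveson cited just before the lemma, $\delta$ extends to a (bounded) derivation $\tilde\delta\colon \cM \to \cM$, and the Kadison--Sakai theorem then yields $a \in \cM$ with $\tilde\delta(x) = ax - xa$ for every $x \in \cM$. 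Restriction to $C_0(\cM,\tau)$ gives the desired spatial representation $\delta(\cdot) = [a,\cdot]$.

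For the second assertion I would perform a standard real/imaginary-part splitting of $a$. Assuming $\delta$ is a $*$-derivation, a direct comparison of $\delta(x)^* = x^*a^* - a^*x^*$ with $\delta(x^*) = ax^* - x^*a$ yields that $a+a^*$ (or $a-a^*$, depending on the sign convention chosen for ``$*$-derivation'') commutes with every $x \in C_0(\cM,\tau)$. Because $C_0(\cM,\tau)$ is weak*-dense in $\cM$, this element must lie in $Z(\cM)$. Replacing $a$ by $a - \tfrac{1}{2}(a+a^*) = \tfrac{1}{2}(a - a^*)$ (respectively $a - \tfrac{1}{2}(a-a^*) = \tfrac{1}{2}(a+a^*)$) does not change the commutator $[a,\cdot]$, because a central summand drops out of the bracket. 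The outcome is a skew-adjoint (respectively self-adjoint) implementing element; in the skew-adjoint case, writing it as $ih$ with $h = h^* \in \cM$ gives the self-adjoint implementation $\delta(x) = i[h,x]$, which is what the statement demands.

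The main obstacle is the extension step in the first paragraph: pushing $\delta$ from a derivation of the $C^*$-algebra $C_0(\cM,\tau)$ up to a derivation of the whole von Neumann algebra $\cM$. This is where the Sakai/Arveson machinery is genuinely needed, since a naive weak*-continuous extension of a bounded map on a dense subalgebra need not automatically respect the Leibniz rule, and because $\cM$ may be far from $\sigma$-finite so elementary ultraweak-density arguments are delicate. Once this extension is available, both the Kadison--Sakai inner-derivation conclusion and the central-projection trick that self-adjointifies the implementing element in the $*$-case reduce to short algebraic manipulations.
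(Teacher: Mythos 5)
Your proposal is correct and follows essentially the same route as the paper, whose proof consists precisely of citing Sakai/Arveson for spatiality of derivations of the $C^*$-algebra $C_0(\cM,\tau)$ (implemented by an element of its weak closure $\cM$) and a remark of Huang for the self-adjoint normalization; your write-up merely expands those two citations into the extension-plus-Kadison--Sakai argument and the standard real/imaginary splitting modulo the centre. The only cosmetic caveat is the sign convention for ``$*$-derivation'': in the convention relevant to the Paterson--Sinclair decomposition used later (namely $\delta(x^*)=-\delta(x)^*$), the central correction makes the implementing element itself self-adjoint, which is the case the paper actually needs.
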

\begin{proof}
The first statement follows from  \cite[Theorem 2]{Sakai} (or \cite[Theorem 4.1]{Arveson}) and the fact that $C_0(\cM,\tau)$ is a $C^*$-algebra.
For the second statement, see e.g. \cite[Chapter 3.4, Remark 3.4.1]{Huang}.
\end{proof}

We now come to the main result of this section, which gives the full  description of hermitian operators   on a  symmetric space $E(\cM,\tau)$.

\begin{theorem}\label{th:her}

Let $E(\cM,\tau)$ be a   symmetric  space affiliated with  an atomless semifinite von Neumann algebra (or an atomic von Neumann
algebra with all atoms having the same trace) $\cM$  equipped
with a semifinite faithful normal trace $\tau$.
Assume that $\norm{\cdot}_E$ is order continuous and is not proportional to $\norm{\cdot}_2$.
Then, a bounded linear operator $T$ on $E(\cM,\tau)$ is a hermitian operator on $E(\cM,\tau)$ if and only if  there exist  self-adjoint operators $a$ and $b$ in $\cM$ such that
\begin{align}\label{eq:her}
Tx=ax+xb,~x\in E(\cM,\tau).
\end{align}
In particular, $T$ can be extended to a bounded hermitian operator on  the von Neumann algebra $\cM$.
\end{theorem}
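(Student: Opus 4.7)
The plan is to reduce the description of a hermitian operator on $E(\cM,\tau)$ to the (already classical) description of a hermitian operator on the $C^*$-algebra $C_0(\cM,\tau)$ via Proposition \ref{prop:general:H}, and then to invoke the Paterson--Sinclair structure theorem \cite{PS,Sinclair} together with Lemma \ref{derivation} in order to handle the derivation component.

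The converse implication is the easy half: if $a,b\in\cM$ are self-adjoint then $x\mapsto ax+xb$ is a bounded operator on $E(\cM,\tau)$ by the $\cM$-bimodule inequality $\|axb\|_E\le\|a\|_\infty\|b\|_\infty\|x\|_E$, and its numerical range is real; this may be verified on a total family of support functionals, for instance those supplied by Proposition \ref{prop:tppR} and Corollary \ref{prop:tUUR}, then extended by continuity.

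For the non-trivial direction I proceed in four steps. \textbf{Step 1 (reduction).} By Proposition \ref{prop:general:H}, $T$ extends to a bounded hermitian operator on the $C^*$-algebra $C_0(\cM,\tau)$, which I continue to denote by $T$. \textbf{Step 2 (Paterson--Sinclair).} By the structure theorem for hermitian operators on a $C^*$-algebra \cite{PS,Sinclair}, the restriction $T|_{C_0(\cM,\tau)}$ can be written as
\[
T(x)=ax+xb+\delta(x),\qquad x\in C_0(\cM,\tau),
\]
where $a,b$ are self-adjoint multipliers of $C_0(\cM,\tau)$ and $\delta$ is a $*$-derivation of $C_0(\cM,\tau)$. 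Since $\cM$ naturally embeds in the multiplier algebra of $C_0(\cM,\tau)$, a routine identification lets me take $a,b\in\cM$ self-adjoint. \textbf{Step 3 (derivation).} By Lemma \ref{derivation}, the $*$-derivation $\delta$ on the $C^*$-algebra $C_0(\cM,\tau)$ is spatial and implemented by a self-adjoint element $c\in\cM$, i.e.\ $\delta(x)=i[c,x]$. Absorbing $ic$ into the left and $-ic$ into the right multiplication yields self-adjoint $a',b'\in\cM$ with $T(x)=a'x+xb'$ on $C_0(\cM,\tau)$. \textbf{Step 4 (extension to $E(\cM,\tau)$).} Order continuity of $\norm{\cdot}_E$ gives $E(\cM,\tau)\subset C_0(\cM,\tau)$ and the density of $\cF(\tau)$ in $E(\cM,\tau)$. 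The operator $x\mapsto a'x+xb'$ is bounded on $E(\cM,\tau)$ by the bimodule property and agrees with $T$ on the dense set $\cF(\tau)$, so the identity $T(x)=a'x+xb'$ extends to all of $E(\cM,\tau)$ by continuity.

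The step I expect to be the main obstacle is Step 2, not because the Paterson--Sinclair theorem is unavailable but because one must verify that $T$, originally defined only on $E(\cM,\tau)$, genuinely gives a bounded hermitian operator on the non-unital $C^*$-algebra $C_0(\cM,\tau)$, and that the multipliers produced by the structure theorem actually lie in $\cM$ rather than in some larger multiplier algebra. This is precisely where the hypothesis $\norm{\cdot}_E\not\propto\norm{\cdot}_2$ and the order continuity enter, funnelled through the preceding sequence of lemmas (Lemma \ref{lemma:orthogonal}, Corollary \ref{cor:disjoint}, Lemmas \ref{lemmaptop}--\ref{lemmaMtoM}, Proposition \ref{redu} and Proposition \ref{prop:general:H}). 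Once Step 2 is in place, Lemma \ref{derivation} and the density argument of Step 4 are routine.
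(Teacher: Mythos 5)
Your proposal is correct and follows essentially the same route as the paper's own proof: reduction to $C_0(\cM,\tau)$ via Proposition \ref{prop:general:H}, the Paterson--Sinclair structure theorem, Lemma \ref{derivation} to make the derivation spatial with a self-adjoint implementing element in $\cM$, and extension to $E(\cM,\tau)$ by density of $\cF(\tau)$ under the order continuous norm. The only cosmetic difference is that you phrase the Paterson--Sinclair decomposition with both a left and a right multiplier before absorbing the derivation, whereas the paper uses the left-multiplication-plus-$*$-derivation form directly; these are equivalent.
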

\begin{proof}
The `if' part of the theorem  is obvious (see e.g. the argument in  \cite[p.71]{Sourour} or \cite[p. 167]{FJ2}).

By Corollary \ref{prop:general:H}, $T$ is a bounded  hermitian operator on $C_0(\cM,\tau)$.
Recall that any hermitian operator $T$ on a $C^*$-algebra $\cA$ is the sum of  a left-multiplication by a self-adjoint operator in $\cA$ and
a $*$-derivation in $\cA$ (see e.g. \cite[p.213]{Sinclair}).
It follows from  Lemma \ref{derivation}  that there exist self-adjoint elements $a,b\in \cM$ such that
$$Tx=ax+xb,~x\in C_0(\cM,\tau).$$
Noting that $\cF(\tau)\subset C_0(\cM,\tau)$, we obtain that
$$Tx=ax+xb,~x\in \cF(
\tau) .$$
Since $E(\cM,\tau)$ has order continuous norm, it follows that  $\cF(\tau)$ is dense in $(E(\cM,\tau),\norm{\cdot}_E)$ (see e.g. \cite[Proposition 46]{DP2014} or \cite[Remark 2.9]{HSZ}).
For any $x\in E(\cM,\tau)$, there exists a sequence  $\{y_n\}\subset \cF(\tau)$ such that $\norm{y_n-x}_E\to 0$.
Hence,
we obtain that
$$Tx= \norm{\cdot}_F-\lim_n T(y_n) = \norm{\cdot}_F-\lim_n \left(ay_n +y_n b\right )=  ax+xb,~x\in E(\cM,\tau) .$$
This completes the proof.
\end{proof}

\section{Isometries}
The goal of this section is to answer   the question posed in \cite{Sukochev,CMS} and stated at the outset of this paper.
Throughout this section, unless stated otherwise, we always assume that $\cM$  is an atomless semifinite
von Neumann algebra or an atomic semifinite von Neumann algebra with all atoms having the
same trace, and we assume that $\tau$ is a semifinite faithful normal trace on $\cM$.

Before proceeding to the proof of Theorem \ref{th:iso}, we need  the following auxiliary tool, which  extends \cite[Corollary 2]{Sourour} and \cite[Corollary 3.2]{Sukochev}.

 \begin{cor}\label{4.2}Let $(\cM,\tau )$ be an atomless semifinite von Neumann algebra or an atomic von Neumann algebra whose atoms having the same trace.
 Let $E(\cM,\tau)$ be a   symmetric operator space  whose norm is order continuous and  is not proportional to $\norm{\cdot}_2$.
Let $T$ be a bounded hermitian operator on $E(\cM,\tau)$. 
Then, $T^2$ is also a hermitian operator on $E(\cM,\tau)$ if and only if $$T(y)=a   y+y   b  ,~\forall y\in L_1(\cM,\tau )\cap \cM ,$$
for some   self-adjoint operators   $a \in \cM_w $
 and $b\in \cM_{{\bf 1}-w}$, where $w \in P(Z(\cM))$.

\end{cor}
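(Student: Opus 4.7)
The plan is to apply Theorem~\ref{th:her} to both $T$ and $T^2$. This converts the hypothesis into a functional identity $a_0 y b_0 = \alpha y + y\beta$ whose exploitation via commutator substitutions forces $a_0, b_0$ (representing $T$) to split along a central projection, up to a central shift. The easy ``$\Leftarrow$'' direction is immediate: with $a \in \cM_w$, $b \in \cM_{{\bf 1}-w}$ and $w \in P(Z(\cM))$, centrality of $w$ yields $a = aw$, $b = ({\bf 1}-w)b$, and $wy = yw$; hence $ayb = a\,(yw)\,({\bf 1}-w) b = ay\,[w({\bf 1}-w)]\,b = 0$. Therefore $T^2(y) = a^2 y + 2ayb + yb^2 = a^2 y + y b^2$, which is hermitian by Theorem~\ref{th:her}.

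For the ``$\Rightarrow$'' direction, Theorem~\ref{th:her} produces self-adjoint $a_0, b_0, c, d \in \cM$ with $T(y) = a_0 y + y b_0$ and $T^2(y) = cy + yd$. Expanding $T^2(y) = a_0^2 y + 2 a_0 y b_0 + y b_0^2$ and equating gives
\begin{equation}\label{eq:plankey}
a_0\, y\, b_0 \;=\; \alpha y + y \beta, \qquad y \in E(\cM,\tau),
\end{equation}
with $\alpha := (c - a_0^2)/2$ and $\beta := (d - b_0^2)/2$ self-adjoint in $\cM$. Substituting $y \mapsto Yy$ in \eqref{eq:plankey} and subtracting the left-multiple of \eqref{eq:plankey} by $Y \in \cM$ gives $[a_0, Y]\, y\, b_0 = [\alpha, Y]\, y$; substituting $y \mapsto yY'$ in this identity and subtracting its right-multiple by $Y' \in \cM$ yields
\[
[a_0, Y]\, y\, [Y', b_0] \;=\; 0
\]
for all $y \in L_1(\cM,\tau) \cap \cM$ and $Y, Y' \in \cM$. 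Taking $y$ to be a $\tau$-finite projection and letting these increase strongly to ${\bf 1}$ (semifiniteness of $\tau$) yields $[a_0, Y]\, [Y', b_0] = 0$ for all $Y, Y' \in \cM$.

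Let $J_a$ and $J_b$ be the weakly closed two-sided ideals of $\cM$ generated by $\{[a_0, Y]\}_Y$ and $\{[Y', b_0]\}_{Y'}$; they are of the form $p_a \cM$ and $\cM p_b$ for central projections $p_a, p_b$, and $J_a J_b = 0$ forces $p_a p_b = 0$. From $({\bf 1}-p_a)[a_0, Y] = 0$ for every $Y$ together with centrality of $p_a$, the element $({\bf 1}-p_a) a_0$ commutes with every $Y \in \cM$ and hence lies in $Z(\cM)$; the symmetric argument, using $p_a \le {\bf 1} - p_b$, gives $p_a b_0 \in Z(\cM)$. Setting $w := p_a$ and $c := w b_0 - ({\bf 1}-w) a_0$, the element $c$ is a self-adjoint element of $Z(\cM)$. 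Exploiting the non-uniqueness of the hermitian decomposition, $T(y) = (a_0 + c)y + y(b_0 - c)$, and a direct calculation gives
\[
a_0 + c \;=\; w(a_0 + b_0) \in \cM_w, \qquad b_0 - c \;=\; ({\bf 1}-w)(a_0 + b_0) \in \cM_{{\bf 1}-w},
\]
both self-adjoint; restricting to $L_1(\cM,\tau) \cap \cM$ gives the asserted representation.

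The main difficulty is the passage from the ``pointwise'' identity \eqref{eq:plankey} to the structural dichotomy. Since $(a_0, b_0)$ is only determined by $T$ up to a self-adjoint central shift, one cannot expect $a_0$ and $b_0$ themselves to split along $w$: the role of the ideal argument is precisely to locate $w$ (as the central support of $J_a$) and to guarantee that the central remainders $({\bf 1}-p_a) a_0$ and $p_a b_0$ can be absorbed into the other side via a single central shift. A secondary technical point is that ${\bf 1}$ is in general not in $L_1(\cM,\tau) \cap \cM$; this is handled by the strong-operator approximation through $\tau$-finite projections.
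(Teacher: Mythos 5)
Your proof is correct, and the forward implication takes a genuinely different route from the paper's. The paper reduces the identity $cy+yd=a^2y+2ayb+yb^2$ to Theorem~\ref{central decomposition th} in Appendix~\ref{appendix}, whose proof first extracts the commutation relations $[a,b]=0$ and $[b,[a,y]]=0$ (Lemma~\ref{first computational lemma}), passes to spectral projections (Lemma~\ref{second computational lemma}), and then runs a fairly delicate central-support analysis on commuting projections (Lemmas~\ref{main projection lemma} and~\ref{la5}) before reassembling the central projection. You instead perform the two substitutions $y\mapsto Yy$ and $y\mapsto yY'$ to arrive directly at the separated identity $[a_0,Y]\,y\,[Y',b_0]=0$, and then invoke the structure of $\sigma$-weakly closed two-sided ideals of a von Neumann algebra (each is $z\cM$ for a central projection $z$, namely the supremum of the central supports of the generators). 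This bypasses the appendix entirely and is shorter; the only point to make explicit is that you need $[a_0,Y]\,y\,[Y',b_0]=0$ for \emph{all} $y\in\cM$, not merely $y={\bf 1}$, in order to conclude $J_aJ_b=0$ via Lemma~\ref{central support lemma} — this follows either by running your strong-operator approximation with $y=wp_i$ for arbitrary $w\in\cM$, or from the case $y={\bf 1}$ together with the Leibniz rule $[a_0,YZ]=[a_0,Y]Z+Y[a_0,Z]$. The final bookkeeping (absorbing the central remainders $({\bf 1}-p_a)a_0$ and $p_ab_0$ into a single central shift $c$) produces exactly the decomposition $a=w(a_0+b_0)\in\cM_w$, $b=({\bf 1}-w)(a_0+b_0)\in\cM_{{\bf 1}-w}$ that the paper's argument also yields, so the two proofs agree on the output; the backward implication is identical in both.
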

\begin{proof}
$(\Leftarrow)$.
 Note that $T^2(y)= a^2   y+y   b^2  $, ~$\forall y\in L_1(\cM,\tau)\cap \cM$.
It follows from Theorem \ref{th:her} that $T^2$ is a hermitian operator.

$(\Rightarrow)$. Recall that, by Theorem \ref{th:her}, we have $T(y)=ay+yb$, $y\in \cM$, for some self-adjoint elements $a,b\in \cM$.
 Due to the assumption that $T^2$ is also  hermitian, there exist self-adjoint operators $c,d\in \cM$ such that
 \begin{align*}
 T^2(y)= cy+yd   = a^2y + 2 ayb +  yb^2, ~\forall y\in \cM.
 \end{align*}
 The claim follows from Theorem \ref{central decomposition th}.
\end{proof}
\begin{rem}\label{rem42}
Let $T$, $a$, $b$, $w$ be defined as in Corollary \ref{4.2}.
In particular,   $l(a)\le w $ and $l(b)\le {\bf 1} - w $.
Define
$$z_a: = \sup \left\{ p\in \cP(Z(\cM)): p\le w, ~  pa  \in Z(\cM) \right  \}$$
and
$$z_b: = \sup \left\{ p\in \cP(Z(\cM)): p\le {\bf 1}- w , ~  pb  \in Z(\cM) \right  \}.$$
For any elements $z_1,z_2 \in \left\{ p\in \cP(Z(\cM)): p\le w , ~  pa  \in Z(\cM) \right  \} $ and $d \in \cM$, we have
\begin{align*}
(z_1\vee z_2)a d&= (z_1 +z_2 -z_1z_2)a d = z_1 ad + z_2 ({\bf 1}-z_1)ad \\
&= z_1 a  d + z_2 a ({\bf 1}-z_1)d = d z_1a +  d ({\bf 1}-z_1)  z_2 a   =d  (z_1\vee z_2) a.
\end{align*}
That is, $z_1\vee z_2 \in \left\{ p\in \cP(Z(\cM)): p\le w , ~  pa  \in Z(\cM) \right  \} $.
Hence, $\left\{ p\in \cP(Z(\cM)): p\le w , ~  pa  \in Z(\cM) \right  \} $ is an increasing  net with  the partial order $\le $ of projections.
Therefore,   by Vigier's theorem \cite[Theorem 2.1.1]{LSZ}, we obtain that
$z_a a \in Z(\cM)$.
That is,
 $$z_a \in \left\{ p\in \cP(Z(\cM)): p\le w, ~  pa  \in Z(\cM) \right  \} .$$
 Arguing similarly,
  $$z_b  \in \left\{ p\in \cP(Z(\cM)): p\le {\bf 1}-w , ~  pb  \in Z(\cM) \right  \} .$$
  We have
\begin{align}\label{remarkequ}
T(y)=a (w-z_a)  y+y  b(({\bf 1}-w) -z_b) + (az_a +bz_b)  y ,~y\in L_1(\cM,\tau )\cap \cM .
\end{align}
In particular,
\begin{enumerate}
  \item $w-z_a$, $({\bf 1}-w) -z_b$, $z_a+z_b$ are
pairwise orthogonal projections in $Z(\cM)$;
  \item if $p\in Z(\cM)$ such that $p\le w-z_a$ and  $ap\in Z(\cM,\tau)$ (or $p\le ({\bf 1}-w) -z_b$ and $bp\in Z(\cM)$), then  $p=0$;
  \item   $w-z_a$ (resp., $ ({\bf 1}-w) -z_b $) is the central support of $a (w-z_a)$ (resp.,   $   b(({\bf 1}-w ) -z_b)$).
\end{enumerate}
\end{rem}

\begin{rem}
Let $\cM$ be a     semifinite factor.
It is an immediate consequence of Corollary \ref{4.2} that
if  $T$  a bounded hermitian operator on  $\cM$,
then  $T^2$ is hermitian if and only if $T$ is a left(or right)-multiplication by a self-adjoint operator in $\cM$
(see also the proof of \cite[Corollary 2]{Sourour} and \cite[Corollary 3.2]{Sukochev}).
\end{rem}

Let $\cM_1$ and $\cM_2$ be two von Neumann algebras.
A  complex-linear  map $J:\cM_1 \stackrel{ \tiny injective }{\longrightarrow} \cM_2$ is called Jordan $*$-isomorphism if  $J(x^*)=J(x)^*$ and $J(x^2)=J(x)^2$, $x\in \cM_1$ (equivalently, $J(xy+yx)=J(x)J(y)+J(y)J(x)$, $x,y\in \cM_1$) (see e.g. \cite{Yeadon,Sherman,BR}).
A   Jordan $*$-isomorphism is called normal if it is completely additive (equivalently, ultraweakly continuous).
Alternatively, we adopt the following equivalent definition: $J(x_\alpha)\uparrow J(x)$ whenever $x_\alpha\uparrow x\in \cM_1^+$ (see e.g \cite[Chapter I.4.3]{Dixmier}).
If $J :\cM_1\to \cM_2$ is a surjective Jordan $*$-isomorphism, then $J$ is necessarily normal\cite[Appendix A]{RR}.

The following theorem is the main result of this section.
Due to the complicated nature of hermitian operators on a von Neumann algebra distinct from  a factor,
the proof below is substantially more involved than those in \cite{Sourour,Sukochev}.

\begin{theorem}\label{th:iso}

Let $\cM_1$ and $\cM_2$ be  atomless von Neumann algebras (or  atomic von Neumann algebras whose atoms have the same trace) equipped with semifinite faithful normal traces $\tau_1$ and $\tau_2$, respectively.
Let $E(\cM_1,\tau_1)$ and $F(\cM_2,\tau_2)$ be two  symmetric operator spaces whose norms are order continuous and  are not proportional to  $\norm{\cdot}_2$.
If $T:E(\cM_1,\tau_1)\to F(\cM_2,\tau_2)$ is a surjective isometry, then there exist
two nets of elements $ A_i \in  F(\cM_2 ,\tau_2),~   i\in I $,  disjointly supported from the right and $ B_i \in F(\cM_2 ,\tau_2), ~i\in I$,    disjointly supported from the  left,   and a  surjective  Jordan $*$-isomorphism $J:\cM_1\to \cM_2$  and a central projection $z\in \cM_2$ such that
$$T(x) =  \norm{\cdot}_F-  \sum _{i\in I}    J(x)A_i z + B_i J(x) ({\bf 1}-z)  , ~x\in E(\cM_1,\tau_1)\cap \cM_1,$$
 where  the series is taken as the limit of  all finite partial sums.

\end{theorem}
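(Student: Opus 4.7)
The plan is to apply Theorem \ref{th:her} and Corollary \ref{4.2} to the family $\tilde{L}_a := T L_a T^{-1}$, where $L_a$ denotes left multiplication by a self-adjoint $a \in \cM_1$. Since $T$ is a surjective isometry, conjugation by $T$ carries bounded hermitian operators on $E(\cM_1,\tau_1)$ to bounded hermitian operators on $F(\cM_2,\tau_2)$ (isometries preserve real numerical ranges with respect to any compatible semi-inner product). For self-adjoint $a \in \cM_1$, $L_a$ is hermitian (Theorem \ref{th:her} with $b=0$) and $L_a^2 = L_{a^2}$ is also hermitian, so $\tilde{L}_a$ is a bounded hermitian operator on $F(\cM_2,\tau_2)$ whose square is hermitian. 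By Corollary \ref{4.2} together with Remark \ref{rem42}, $\tilde{L}_a$ decomposes with respect to (a priori $a$-dependent) pairwise orthogonal central projections summing to $\mathbf{1}_{\cM_2}$, on which $\tilde{L}_a$ acts respectively as pure left multiplication, pure right multiplication, and central multiplication.

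The central step is to show this decomposition is $a$-independent. Exploiting the Jordan identity $L_a L_b + L_b L_a = L_{ab+ba}$ for self-adjoint $a,b \in \cM_1$ (which holds because left multiplication is strictly multiplicative), the transferred identity $\tilde{L}_a \tilde{L}_b + \tilde{L}_b \tilde{L}_a = \tilde{L}_{ab+ba}$, upon writing $\tilde{L}_a(y) = \alpha_a y + y\beta_a$ and expanding, produces cross-terms $2\alpha_a y \beta_b + 2\alpha_b y \beta_a$. These cannot appear in an operator of the form $\gamma y + y\delta$ unless they vanish for every $y$, which forces the central support of $\alpha_a$ to be orthogonal to that of $\beta_b$ for all self-adjoint $a,b$; combined with the per-$a$ orthogonality from Remark \ref{rem42}, this forces the splitting central projections to coincide across all $a$. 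After absorbing the central-multiplication piece into either side (central elements commute with everything), one obtains a single $z \in \cP(Z(\cM_2))$ and additive self-adjoint-preserving maps $\Phi, \Psi : \cM_1 \to \cM_2$ with $\tilde{L}_a(y) = \Phi(a)(\mathbf{1}-z)y + y \Psi(a) z$. The Jordan identity then forces $a \mapsto \Phi(a)(\mathbf{1}-z)$ to be a $*$-homomorphism into $(\mathbf{1}-z)\cM_2(\mathbf{1}-z)$ and $a \mapsto \Psi(a) z$ to be a $*$-antihomomorphism into $z\cM_2 z$, so that the complex-linear extension of $J(a) := \Phi(a)(\mathbf{1}-z) + \Psi(a)z$ is a Jordan $*$-homomorphism $\cM_1 \to \cM_2$; surjectivity onto $\cM_2$ follows by repeating the analysis with $T^{-1}$.

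With $J$ and $z$ in hand, the multipliers are read off locally: for each $\tau_1$-finite projection $p \in \cP(\cM_1)$, substituting spectral projections of $x \in p\cM_1 p \cap E(\cM_1,\tau_1)$ into the decomposition of $\tilde{L}_a$ and using the linearity of $T$ produces elements $A_p \in F(\cM_2,\tau_2)z$ and $B_p \in (\mathbf{1}-z)F(\cM_2,\tau_2)$, supported inside $J(p)$, such that $T(x) = J(x) A_p z + B_p J(x)(\mathbf{1}-z)$ for every $x \in p\cM_1 p \cap E(\cM_1,\tau_1)$ (uniqueness of $A_p, B_p$ following from faithfulness of $J$). Fixing a maximal pairwise orthogonal family $\{p_i\}_{i\in I}$ of $\tau_1$-finite projections in $\cM_1$ summing (strongly) to $\mathbf{1}_{\cM_1}$, the elements $A_i := A_{p_i}$ have pairwise orthogonal left supports (dominated by the orthogonal projections $J(p_i)z$), and similarly the $B_i$ have pairwise orthogonal right supports; order continuity of $\norm{\cdot}_F$ yields $\norm{\cdot}_F$-convergence of the partial sums to $T(x)$ for every $x \in E(\cM_1,\tau_1) \cap \cM_1$. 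The countability of $I$ for $\sigma$-finite $\cM_2$ and the finite-trace specialization to single $A, B$ follow directly.

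The hard part will be the uniformity-of-central-projection step in the second paragraph: the cross-term vanishing argument must be carefully justified across the potential type $I_2$ direct summand of $\cM_2$ (mirroring the delicate separate treatment needed in Lemma \ref{lemmaMtoM}), and the construction of an honest \emph{surjective} Jordan $*$-isomorphism rather than merely a Jordan $*$-homomorphism into a subalgebra is acknowledged in the introduction as a genuine obstacle that a simple adaptation of the arguments in \cite{Sourour, Sukochev} does not overcome; this requires the symmetric exploitation of both $T$ and $T^{-1}$ together with careful bookkeeping between the isomorphism and anti-isomorphism components.
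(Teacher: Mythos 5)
Your overall architecture coincides with the paper's: conjugate the left multiplications $L_a$ by $T$, use Theorem~\ref{th:her} and Corollary~\ref{4.2} (with Remark~\ref{rem42}) to decompose $TL_aT^{-1}$ into left, right and central pieces along central projections, make that splitting uniform in $a$, assemble the Jordan $*$-isomorphism, obtain surjectivity by running the same analysis on $T^{-1}$, and read off the multipliers from a maximal orthogonal family of $\tau$-finite projections. The genuine gap is precisely in the step you flag as hard. Your assertion that the cross terms $2\alpha_a y\beta_b+2\alpha_b y\beta_a$ ``cannot appear in an operator of the form $\gamma y+y\delta$ unless they vanish for every $y$'' is false as stated: if, say, $\alpha_a$ is central, then $\alpha_a y\beta_b=y(\alpha_a\beta_b)$ is already of that form. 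The correct conclusion is only that, after passing to suitable central pieces, one factor in each cross term must become central; turning that into orthogonality of the central supports requires both the normalization of Remark~\ref{rem42} (removing central parts of $\alpha_a$, $\beta_b$) and a structural result in the spirit of Theorem~\ref{central decomposition th} --- and that theorem, as proved in the appendix, treats a single bilinear term $ayb=ey+yf$, not the two-term identity your expansion of $\tilde L_a\tilde L_b+\tilde L_b\tilde L_a$ produces. The paper avoids this entirely by exploiting \emph{additivity} rather than the Jordan product: restricting $TL_{a+b}T^{-1}=TL_aT^{-1}+TL_bT^{-1}$ to the reduced algebra $\cM_{z(J_1(b))\wedge z(J_2(a))}$ gives an identity with no bilinear cross terms, to which Theorem~\ref{central decomposition th} applies directly and forces $z(J_1(b))\wedge z(J_2(a))=0$; the uniform $z$ is then $\bigvee_{b}z(J_1(b))$. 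To close your gap you would either have to prove a two-cross-term analogue of Theorem~\ref{central decomposition th}, or polarize back to the additive identity --- at which point you are running the paper's argument.

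A secondary omission: for the final $\norm{\cdot}_F$-convergence of the partial sums and for $J(\cM_1)$ to be all of $\cM_2$ (a von Neumann algebra, not merely a weakly dense Jordan subalgebra) one needs $J$ to be \emph{normal}; the paper spends a nontrivial part of its Step~2 proving $J(a_i)z\uparrow J(a)z$ for increasing nets, using order continuity of $\norm{\cdot}_E$ together with operator monotonicity of $t\mapsto t^{1/2}$ and convergence in the local measure topology. Your sketch does not supply this, and it is not automatic from the algebraic identities alone. The remaining steps (the $*$-homomorphism/anti-homomorphism split, surjectivity via $T^{-1}$, and the assembly of the $A_i$, $B_i$ from $T(e_i)z$ and $T(e_i)(\mathbf{1}-z)$) are in substance identical to the paper's Steps~2--4 and are fine.
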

\begin{proof}

The indices of von Neumann algebras  $\cM_1$ and $\cM_2$ play no role in the proof below. So, to reduce the notation,   we assume that $\cM_1=\cM_2=\cM$.
We denote by $L_a$ (resp. $R_a$) the left (resp. right) multiplication by $a\in \cM$,
that is, $$L_a(x)=ax$$ (resp. $R_a(x)=xa$) for all $x\in  S(\cM,\tau)$.
For any self-adjoint operator $a\in \cM $,
$TL_aT^{-1}$ and $TL_a^2T^{-1}$ are hermitian on $ F(\cM,\tau)$ (see e.g. \cite[Lemma 2.3]{FJ89} or \cite{JL}).

We divide the proof into several steps.

\textbf{Step 1.} We aim to prove  that there exists  $z\in \cP(Z(\cM)) $ (does not depend on $a$ below) such that
\begin{align}\label{tlt}
TL_aT^{-1}= L_{J_1(a)z }+R_{J_2(a)({\bf 1} -z )},~a=a^*\in \cM,
\end{align}
 where $J_1(a),J_2(a)$ are self-adjoint operators  in $\cM$.
Let $z(f)$ be the central support of $f=f^*\in \cM$.
For any fixed  $b=b^* \in \cM$,
\begin{align}
\label{tlbt1}
TL_b T^{-1}y   \stackrel{\eqref{remarkequ}}{=}L_{J_1(b)} y+ R_{J_2(b)}y  +J_3(b ) y ,~\forall   y\in L_1(\cM,\tau )\cap \cM ,
\end{align}
for some   self-adjoint operators   $J_1(b),J_2(b)\in \cM$, $J_3(b)=J_3(b)^*\in Z(\cM)$
such that
\begin{enumerate}
  \item the $z(J_1(b))$, $z(J_2(b))$ and $z(J_3( b))$ are pairwise orthogonal projections   in $Z(\cM)$ (see (1) and (3)  in   Remark   \ref{rem42});
  \item if $p\in Z(\cM)$ such that $p\le z(J_1(b))$  and  $J_1(b)p\in Z(\cM,\tau)$ (or $p\le z(J_2(b))$ and  $J_2(b)p\in Z(\cM)$),  then $p=0$ (see (2) in  Remark  \ref{rem42}).
\end{enumerate}

By \eqref{tlbt1}, for any self-adjoint $a\in \cM$, we have
\begin{align}
\label{tlbt2} TL_{a } T^{-1} y =  L_{J_1(a  )    } y + R_{J_2 ( a )  } y +J_3(a )y   , ~\forall  y\in L_1(\cM,\tau )\cap \cM \end{align}
and
 \begin{align}\label{tlbt3}
 TL_{a+b} T^{-1} y =  L_{J_1(a+b )    } y +R_{J_2 ( a+ b)  } y +J_3(a+b)y  ,  ~\forall y\in    L_1(\cM,\tau )\cap \cM .
 \end{align}
Now, we consider the reduced algebra $\cM_{z(J_1(b)) \wedge z(J_2(a))}$.
 For all $y\in (L_1\cap L_\infty )(\cM_{z(J_1(b)) \wedge z(J_2(a))},\tau )$,
 we have
\begin{align*}
   L_{J_1(b)} y   + R_{J_2  ( a )  } y \qquad  =\qquad
  &
L_{J_1(b  )    } y + R_{J_2 (b )  } y +J_3(b )y +L_{J_1(a  )    } y + R_{J_2 ( a )  } y +J_3(a )y\\
 \stackrel{\eqref{tlbt1}~\mbox{\tiny  and }\eqref{tlbt2}}{ =}&TL_{b} T^{-1} y +TL_{a } T^{-1} y\\
 = \qquad
   &TL_{b+a } T^{-1} y\\
 \stackrel{\eqref{tlbt3}}{=~}~ \quad  &
  L_{J_1(a+b )    } y + R_{J_2( a+ b)  } y +J_3(a+b)y   .     \end{align*}
By Theorem \ref{central decomposition th},
there exists central projection
$$p\le   z(J_1(b)) \wedge z(J_2(a))$$
 such that
 $$J_1(b)p\mbox{  and  } J_2(a)( z(J_1(b)) \wedge z (J_2(a))  -p)$$
 are in the center $Z(\cM)$ of $\cM$.
 However, by (2) of Remark \ref{rem42} (used twice),
 $$ p=0 =  z(J_1(b))   \wedge z (J_2(a)) -p .$$
That is,
 $$z(J_1(b)) \wedge z (J_2(a)) =0 .$$
 Note that $a,b$ are arbitrarily taken.
Defining
 $$z:=  \bigvee_{b=b^* \in \cM} z (J_1(b)) , $$
   we obtain
  that
  $$  TL_b T^{-1}y  \stackrel{\eqref{tlbt1}}{=}L_{J_1(b)} y+ R_{J_2(b)}y  +J_3(b ) y =  L_{J_1(b) + J_3(b)z} y + R_{J_2(b)+J_3(b)({\bf 1} -z)}y ,~\forall   y\in L_1(\cM,\tau )\cap \cM . $$
  Replacing $J_1(b) + J_3(b)z$ (resp., $J_2(b)+J_3(b)({\bf 1} -z)$) with $J_1(b)$ (resp., $J_2(b)$), we   obtain
    \eqref{tlt}.

\textbf{Step 2.}
Note that $$L_{J_1(a)^2z } +R_{J_2(a)^2 ({\bf 1} -z)} \stackrel{ \eqref{tlt} } {=} (TL_aT^{-1})^2= TL_{a^2}T^{-1} \stackrel{\eqref{tlt}}{= } L_{J_1(a^2)z } +R_{J_2(a^2)({\bf 1} -z)}   $$ for every $a=a^*\in \cM$.
 By standard    argument   (see e.g. \cite[p.117]{Sukochev}),  we obtain
 \begin{align}\label{defJ12}J(\cdot):=J_1 (\cdot) z  +J_2(\cdot )({\bf 1}- z)
 \end{align}
 is an injective Jordan $*$-isomorphism on $\cM$.
Let $0\le a_i\uparrow a\in \cM$.
Clearly, $J(a_i)z \uparrow \le J(a)z $ (see e.g. \cite[Eq.(12)]{HSZ} and \cite[p.211]{BR}).
Since $a_i\uparrow a$, it follows that for any $x\in E(\cM,\tau)$,  $x^*  a_i  x \uparrow x^* a   x $\cite[Proposition 1 (vi)]{DP2014} and $x^*  a_i  x \to x^* a   x $ in measure topology\cite[Proposition 2 (iv)]{DP2014}.
By the fact that     $t\mapsto t^{1/2}$ is an operator monotone function\cite[Proposition 1.2]{DD95},
we obtain that $ \left( x^*  (a- a_i)    x \right) ^{\frac12} $ is a decreasing net.
Note that $\left(x^* (a-a_i )  x \right) ^{\frac12}  \to 0$ in measure\cite[p.213]{DP2014}.
By   \cite[Proposition 2 (iii)]{DP2014}, we obtain that
that $\left(x^* (a-a_i )  x \right) ^{\frac12}  \downarrow 0$ and, therefore,
 $$E(\cM,\tau )\ni |(a-a_i)^{1/2}  x |  = \left(x^* (a-a_i )  x \right) ^{\frac12}  \downarrow 0.$$
It follows from the order continuity of $\norm{\cdot}_E$ that
$$\norm{ J(a-a_i)z T(x) }_F \stackrel{\eqref{tlt}}{ =}  \norm{T((a-a_i)x)}_F=  \norm{(a-a_i)x}_E  \le \norm{(a-a_i)^{1/2}}_\infty \norm{(a-a_i)^{1/2}x}_E \to 0 $$
for all $x\in E(\cM,\tau)$ such that $T(x)$ is a $\tau$-finite projection in $\cF(\tau)$ less than $z$.
Therefore,
$J(a_i)z \to J(a)z$ in localized measure topology \cite[Proposition 20]{DP2014}.
Hence,
$J(a_i)z\uparrow J(a)z$ (see Section \ref{s:p1}).
The same argument shows that $J(a_i)({\bf 1}-z)\uparrow J(a)z({\bf 1}-z) $.
Therefore, $J(\cM)$ is weakly closed\cite[Remark 2.16]{HSZ} and $J:\cM\to J(\cM)$ is a surjective (normal) Jordan $*$-isomorphism.


\textbf{Step 3.}
We claim that $J$ is surjective.
Let $c=c^*\in \cM$.
Note that
$T^{-1}(L_{cz} +R _{c({\bf 1} -z)})T $ and its square $T^{-1}(L_{c^2z} +R _{c^2 ({\bf 1}-z)})T  $
are hermitian operators.
Hence, by Corollary \ref{4.2},
  there exists  a
central projection $z'\in \cM$ and $c'=(c')^*\in \cM$
such that
\begin{align}\label{cform}
T^{-1}(L_{cz} +R _{c({\bf 1}-z)})T = L_{c' z'} + R_{c' ({\bf 1} -z')}.
\end{align}
Employing  the argument used in steps 1 and 2 to  \eqref{cform}, we obtain  a normal injective Jordan $*$-isomorphism $J'':\cM\to \cM$ such that $J''(c) = c'$.
Moreover,
 for each $c\in J(\cM)$, we have
$$L_{J^{-1}(c)}\stackrel{\eqref{tlt}}{=}T^{-1}(L_{cz}+R _{c({\bf 1}-z)})T \stackrel{\eqref{cform}}{=} L_{c' z'} +R_{c' ({\bf 1} -z')}.$$
Hence, $L_{J^{-1}(c)-c'z'}=R_{c' ({\bf 1} -z')}$.
In particular,    $c'({\bf 1} -z')\in Z(\cM)$.
Hence, $L_{J^{-1}(c)} = L_{c'z '+ c'({\bf 1} -z') }=L_{c'}=L_{J''(c)}$.
That is, for each $c\in J(\cM)$, we have
$J''(c) = J^{-1}(c)$ (see also \cite[Remark 3.3]{Sukochev} for the case when $\cM$ is a factor).
Hence, $J''(J(\cM))= J^{-1}(J(\cM))=\cM$.
 By the  injectivity of $J''$, we obtain that $J(\cM)=\cM$.
This proves the claim.

\textbf{Step 4.}
Applying \eqref{tlt} to $T(x)$,
 we obtain that
\begin{align*}
T(ax)& ~=~ J_1(a)z T(x) + T(x)({\bf 1} - z) J_2(a )\\
&~= ~J_1(a)z T(x)  + T(x) J_2(a)({\bf 1} - z) \\
&\stackrel{\eqref{defJ12}}{=}J  (a)z  T(x) +  T(x) J (a)({\bf 1} - z)
 \end{align*}
for all $ a=a^*\in \cM ,~x\in E(\cM,\tau)\cap \cM.$
For any $\tau$-finite projection $e $ in $\cM $, we have
$$T(xe )=J(x)T(e) z + T(e)J(x)({\bf 1}-z), ~x\in \cM . $$
Let  $\{e_i \}_{i\in I}$ be a net of pairwise orthogonal $\tau$-finite projections in $\cM $ such that $\sup_i  e_i ={\bf 1}$\cite[Corollary 8]{DP2014}
and let $\{\lambda_\alpha\}$ be collection  of all finite subsets of $I$, partially  ordered by inclusion.
By the order continuity of $\norm{\cdot}_E$, we have
$\lim_\alpha \sum_{e_\in \lambda_\alpha} xe_i =x$\cite[Theorem 6.13]{DP2012}, and, since $T$ is an isometry, it follows that
\begin{align*}
T(x) &=\norm{\cdot}_F-\lim_\alpha  \sum _{e_i\in \lambda_\alpha} T(x e_i) =\norm{\cdot}_F-\lim_\alpha  \sum _{e_i\in \lambda_\alpha}  J(x)T(e_i) z + T(e_i)J(x)({\bf 1}-z)\\
& =\norm{\cdot}_F-  \sum _{i\in I}  J(x)T(e_i) z + T(e_i)J(x)({\bf 1}-z) ,~\forall x\in E(\cM,\tau)\cap \cM.
\end{align*}

Note that
$$T(e_i) =  J(e_i )T(e_i) z + T(e_i)J(e_i)({\bf 1}-z) .$$
Recall that Jordan $*$-isomorphisms preserve the disjointness for projections (see e.g. \cite[Proposition 2.14]{HSZ}).
Letting $B_i:= T(e_i) z= J(e_i )T(e_i) z $ and $A_i :=T(e_i) ({\bf 1}-z) = T(e_i)J(e_i)({\bf 1}-z) $, we complete the proof.
\end{proof}

\section{Uniqueness of symmetric structure}

Let $C_E$ be the symmetric operator ideal in $B(\cH)$ generated by a symmetric sequence space $E$.
We say that $C_E$ $\it{has \ a \ unique \ symmetric \ structure}$ if   $C_{E}$  isomorphic to some ideal   $C_F$  corresponding to   a symmetric sequence space  $F$ implies that    $E=F$ with equivalent norms.
At the International Conference on Banach Space Theory and its Applications at  Kent, Ohio (August 1979),
  Pe{\l}czy\'{n}ski posed the following question  concerning the symmetric structure of ideals of compact operators on the Hilbert space $\ell_2$
(see also \cite[Question (B)]{Arazy78} and \cite[Problem A]{Arazy81}
):
 \emph{Does   the    ideal $C_E$ of  compact    operators   corresponding to an arbitrary separable symmetric sequence space $E$  have   a  unique   symmetric  structure?}
For readers who are interested in this topic, we refer to \cite{HSS,Arazy78,Arazy81}.

We assume, in addition,  that
$\norm{e}_{E}=1$ for any atom  $e\in \cM$
if $\cM=B(\cH)$ equipped with the standard trace;   $\norm{{\bf 1}}_E=1 $ if $\cM$ is a type $II_1$-factor equipped with the unique faithful normal tracial state.
Here, we consider an analogue of Pe{\l}czy\'{n}ski's problem in the sense of isometric isomorphisms.
This assumption implies that if $\norm{\cdot}_E$ is proportional to $\norm{\cdot}_2$, then $\norm{\cdot}_E=\norm{\cdot}_2$.
Let $F(\cM,\tau)$ be  a symmetric operator space.
 If a  symmetric operator symmetric $E(\cM,\tau)$ isometric to   $F(\cM,\tau)$ implies that $E(\cM,\tau)$ coincides with $F(\cM,\tau)$,  then we say that $F(\cM,\tau)$ has a \emph{unique symmetric structure} up to an isometry.
The following corollary  extends results in  \cite{AZ,Kalton_R,R,Z77,Potepun} to the noncommutative setting.

\begin{cor}\label{unique}
Let $\cM=B(\cH)$ be  equipped with the standard trace $\tau$ or $\cM$ be a $II_1$-factor equipped with the unique faithful normal  tracial state $\tau$.
Let $E(\cM,\tau)$ and $F(\cM,\tau)$ be   symmetric operator spaces whose norms are order continuous and are not proportional to the norm of  $L_2(\cM,\tau)$.
Then, $T$ is a surjective isometry from $E(\cM,\tau)$ to $F(\cM,\tau)$ if and only if there exist a unitary   element  $u\in \cM $ and a trace-preserving Jordan $*$-isomorphism $J$ such that
\begin{align}\label{factor}T(x) =  u J(x), ~x\in \cM.
\end{align}
In particular,
any symmetric space $E(\cM,\tau)$ (including the case when $E(\cM,\tau)=L_2(\cM,\tau)$) has a unique symmetric structure up to an isometry.
\end{cor}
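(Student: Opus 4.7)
The $\Leftarrow$ direction is a direct verification. If $u \in \cM$ is unitary and $J$ is a trace-preserving Jordan $*$-isomorphism of the factor $\cM$, then $\mu(T(x)) = \mu(J(x)) = \mu(x)$: multiplication by a unitary preserves singular values, and a trace-preserving Jordan $*$-isomorphism of a factor (being either a $*$-isomorphism or a $*$-antiisomorphism) preserves the spectral distribution and hence $\mu$. Consequently $\|T(x)\|_F = \|\mu(x)\|_F$, which combined with $\|T(x)\|_F = \|x\|_E$ yields $\|x\|_E = \|x\|_F$ on $E \cap \cM$, forcing $E = F$ as symmetric operator spaces. This simultaneously gives the ``if'' half of the iff and the uniqueness-of-symmetric-structure consequence in the case when the norm is not proportional to $\|\cdot\|_2$.

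For the $\Rightarrow$ direction, I would invoke Theorem~\ref{th:iso}. Because $\cM$ is a factor, $Z(\cM) = \mathbb{C}\cdot{\bf 1}$ and the central projection $z$ produced by the theorem lies in $\{0, {\bf 1}\}$. The two cases are parallel; focusing on $z = 0$, one has $T(x) = \sum_i B_i J(x)$ on $E \cap \cM$ with the $B_i$'s having pairwise orthogonal right supports $r(B_i) \le J(e_i)$, where $\{e_i\}$ is a partition of ${\bf 1}$ by pairwise orthogonal $\tau$-finite projections. The disjointness of right supports assembles the $B_i$'s into a single operator $B := \sum_i B_i$ affiliated with $\cM$, so that $T(x) = B J(x)$ on $E \cap \cM$. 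In the symmetric case $z = {\bf 1}$ one obtains instead $T(x) = J(x) A$ with $A$ to be shown unitary, after which the rewriting $J(x) A = A \cdot (A^* J(x) A)$ (with ${\rm Ad}(A^*) \circ J$ still a Jordan $*$-isomorphism) returns the form $T(x) = uJ'(x)$.

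The crux is to prove $B \in \cM$ is unitary. Boundedness of $B$ in $\cM$ follows from applying the uniform estimate $\|T(p)\|_\infty \le 3\|T\|$ of Lemma~\ref{lemmaptop} to the individual pieces $B_i = T(e_i)$ and using the disjointness of their right supports. Since $J$ preserves $\mu$ and hence the $E$-norm on $E \cap \cM$, the identity $\|BJ(x)\|_F = \|T(x)\|_F = \|x\|_E = \|J(x)\|_E$ shows that the left multiplication $L_B$ restricts to an isometry from $E \cap \cM$ into $F \cap \cM$. Testing $L_B$ against a rich family of projections and exploiting the triviality of $Z(\cM)$, one forces $B^*B \in Z(\cM) = \mathbb{C}\cdot{\bf 1}$; the normalization $\|T(e)\|_F = \|e\|_E$ on finite-rank projections together with the surjectivity of $T$ then pins down $B^*B = {\bf 1} = BB^*$, making $B$ a genuine unitary. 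Setting $u := B$ yields $T(x) = uJ(x)$ on $E \cap \cM$, extending to $E$ by density of $E \cap \cM$ (order continuity of $\|\cdot\|_E$).

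The main obstacle is forcing $B^*B$ into $Z(\cM)$: this step converts an analytic norm identity for $L_B$ into an algebraic commutation relation and leans crucially on the hypothesis that $\|\cdot\|_E$ is not proportional to $\|\cdot\|_2$ (without which $L_B$ could be a non-trivial Hilbertian rotation). The $L_2$ case of the uniqueness assertion, excluded from the main iff by this hypothesis, requires a separate argument: any symmetric operator space isometric to $L_2(\cM,\tau)$ is itself a Hilbert space in its norm, and the only symmetric operator norm on $\cM$ compatible with $\mu$ and satisfying the parallelogram identity (under our fixed normalization) is the $L_2$-norm. A secondary technical point is to verify convergence of $\sum_i B_i$ to a bounded operator in $\cM$ in the $B(\cH)$ setting with countably many pieces, which follows from the disjointness of right supports together with the uniform $\cM$-bound inherited from the isometric property.
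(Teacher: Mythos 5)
Your overall architecture (invoke Theorem~\ref{th:iso}, use $Z(\cM)=\mathbb{C}\cdot{\bf 1}$ to reduce to $z\in\{0,{\bf 1}\}$, show the multiplier is unitary, and treat the $L_2$ case separately) matches the intended proof, but the two load-bearing steps are not correct as written. First, the assembly of $B:=\sum_i B_i$ into a bounded operator fails: the uniform bound $\norm{B_i}_\infty\le 3\norm{T}$ together with pairwise orthogonality of the \emph{right} supports does not control $\norm{\sum_{i}B_i}_\infty$. For instance, the rank-one operators $B_i:\zeta\mapsto\langle\zeta,\xi_i\rangle\eta$ (with $\{\xi_i\}$ orthonormal and $\eta$ a fixed unit vector) have norm one and orthogonal right supports, yet $\norm{\sum_{i=1}^{n}B_i}_\infty=\sqrt{n}$. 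One-sided disjointness gives $|\sum_i B_i|^2=\sum_{i,j}B_i^*B_j$ a diagonal form only if the \emph{left} supports are also orthogonal, and that is exactly what is not known a priori (it is essentially equivalent to what you are trying to prove). Second, the step ``testing $L_B$ against a rich family of projections forces $B^*B\in Z(\cM)$'' is where the real work lies and is not carried out: for $B(\cH)$, projections only give the diagonal blocks $J(e_i)B^*BJ(e_i)=|T(e_i)|^2=J(e_i)$; to kill the off-diagonal blocks $T(e_i)^*T(e_j)$ one must also test $T$ on partial isometries between the atoms (this is precisely the content of the arguments of Sourour and Sukochev that the paper cites here). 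You also silently assume that the Jordan $*$-isomorphism $J$ from Theorem~\ref{th:iso} preserves $\mu$; this is the one point the paper's proof actually addresses (it holds because every $*$-automorphism of $B(\cH)$ is inner and a $II_1$-factor has a unique tracial state), and the paper's final remark shows it genuinely fails for $II_\infty$-factors, so it cannot be taken for granted.

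For the remaining assertion, your route through the parallelogram law differs from the paper's: the paper does not invoke a Hilbertian-symmetric-space theorem but runs a K\"othe-duality bootstrap --- $F$ isometric to $L_2=L_2^\times$ gives $F$ isometric to $F^\times$; reflexivity yields the Fatou property and order continuity of both; if $\norm{\cdot}_{F^\times}$ is not proportional to $\norm{\cdot}_2$, the already-proved case gives $F=F^\times$ with equal norms, whence $\norm{x}_2^2=\tau(xx^*)\le\norm{x}_F\norm{x}_{F^\times}=\norm{x}_F^2$ together with the reverse inequality forces $F=L_2$. Your alternative (isometric to a Hilbert space implies the parallelogram law, which for a symmetric norm forces $\norm{\cdot}_F=\norm{\cdot}_2$) is viable in principle --- it is essentially Potepun's theorem --- but you would still need to justify that $F$ coincides with $L_2$ as a set, which requires the order continuity/Fatou input that the paper extracts from reflexivity.
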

\begin{proof}
By Theorem \ref{th:iso}, it suffices to show that the Jordan $*$-isomorphism is trace-preserving.
Indeed,
when $\cM=B(\cH)$, this  follows from the fact that every $*$-automorphism on $B(\cH)$ is inner\cite[Corollary 5.42]{Douglas} (see also argument in \cite[p.75]{Sourour}).
 When $\cM$ is a $II_1$-factor, then the corollary follows from the  same argument in \cite[p. 118--119]{Sukochev}.

For the uniqueness of symmetric structure, we only need to show that if   $L_2(\cM,\tau)$ is isometric to $F(\cM,\tau)$ (when $\cM=B(\cH)$ or a $II_1$-factor), then $F(\cM,\tau)=L_2(\cM,\tau)$ (with the same norm).
Indeed, all other cases follow from \eqref{factor} immediately.

If there exists a surjective isometry $T:L_2(\cM,\tau)\to F(\cM,\tau)$, then
 $F(\cM,\tau)^\times$
is isometric to $L_2(\cM,\tau)$.
That is, $F(\cM,\tau)$ is isometric to $F(\cM,\tau)^\times$. In particular, both $F(\cM,\tau)$ and  $F(\cM,\tau)^\times$ have  the Fatou property and order continuous norms\cite[Theorem 45]{DP2014}.
Hence, $F(\cM,\tau) $ coincides with $F(\cM,\tau)^{\times \times}$ with the same norm\cite[Theorem 32]{DP2014}.
 If $\norm{\cdot}_{F^\times}$ is  proportional to   $\norm{\cdot}_2$, then the norm of  its  K\"{o}the dual  $F(\cM,\tau)^{\times \times}$ is also proportional to   $\norm{\cdot}_2$.
 By the assumption that $\norm{e}_{E}=1$ for any atom  $e\in \cM$
if $\cM=B(\cH)$ (or   $\norm{{\bf 1}}_E=1 $ if $\cM$ is a type $II_1$-factor), we obtain that  $F(\cM,\tau)$ coincide with  $L_2(\cM,\tau)$ and $\norm{\cdot}_F=\norm{\cdot}_2$.
Hence, we only need to consider the case when
 $\norm{\cdot}_{F^\times}$ is not proportional to   $\norm{\cdot}_2$.
Recalling that  $F(\cM,\tau)$ is isometric to $F(\cM,\tau)^\times$, by \eqref{factor}, $F(\cM,\tau)$  coincides with $F(\cM,\tau)^\times$ with $\norm{\cdot}_F=\norm{\cdot}_{F^\times }$, and therefore,
 for any $x\in F(\cM,\tau)$,
 by the definition of K\"{o}the dual,  we have
 $$\tau(xx^*)<\infty ,~x\in F(\cM,\tau), $$ i.e., $F(\cM,\tau)=F(\cM,\tau)^\times \subset L_2(\cM,\tau)$.
 On the other hand, by the definition of K\"{o}the dual, $F(\cM,\tau) \subset L_2(\cM,\tau)$ implies that $F(\cM,\tau)^\times \supset L_2(\cM,\tau)$.
 Hence, $F(\cM,\tau)=F(\cM,\tau)^\times =  L_2(\cM,\tau)$ (in the sense of sets).

Since $\norm{x}_2^2 =\tau(xx^*)\le \norm{x}_F\norm{x}_{F^\times  }=\norm{x}_F^2$, it follows that $\norm{x}_2 \le \norm{x}_F=\norm{x}_{F^\times }$.
Moreover,
$$\norm{x}_{F^\times  } = \sup_{ \norm{y}_F\le 1} \left|\tau(xy)\right | \le\sup_{ \norm{y}_2\le 1} \left| \tau(xy)\right| =\norm{x}_2.  $$
We obtain that
$\norm{x}_2=\norm{x}_F=\norm{x}_{F^\times  }$.
This completes the proof.
\end{proof}

\section{Final remarks}


\begin{rem}
Theorem \ref{th:iso} above covers  all existing  results of surjective isometries on  (complex) symmetric operator/function/sequence spaces in the literature.
 Indeed,
 \begin{enumerate}
   \item when  $\tau$ is finite, we have $T(x)=T(z)  J ( x)   +  J (x)T({\bf 1}-z ) $. This recovers  and extends  \cite[Theorem 3.1]{CMS}, \cite[Theorem 4.11]{JC} and \cite[Theorem 6]{SV}.
   \item  when  $\cM=L_\infty (\Omega, \Sigma,  \mu)$, where $ (\Omega, \Sigma,  \mu)$ is some $\sigma$-finite atomless measure space,  we have  $T(x) =  B J_1(x)   , ~x\in \cM,$ for   some   measurable function $B$ on $(\Omega, \Sigma,  \mu)$.
       In particular, Zaidenberg's results are  recovered (see \eqref{formf} and \cite{Z77,Zaidenberg}).
       \item when  $\cM= (\Gamma, \Sigma, \mu)$ is  a discrete measure space on a set $\Gamma$ with $\mu(\{\gamma\})=1$ for every $\gamma \in \Gamma$,
           we obtain that if $T$ is an isometry on a symmetric space $E(\Gamma)$ whose  norm is  order continuous norm and is not proportional to $\norm{\cdot}_2$, then
          $$(Tx)(\gamma)= A(\gamma )\cdot x(\sigma(\gamma)),~x\in E(\Gamma),~\gamma\in \Gamma,$$
            where $A(\gamma)$ are  unimodular scalars and $\sigma$ is a permutation on $\Gamma$.
           Indeed,
           by Theorem~\ref{th:iso}, there exists a
           surjective Jordan $*$-isomorphism
           $J$ on $\ell_\infty (\Gamma)$.
           By the  bijectivity and  disjointness-preserving property of $J$,
           we infer  that $J$ maps atoms onto atoms in $\ell_\infty (\Gamma)$.
           Hence, $J$ is generated by a permutation.
           This extends Arazy's description of isometries on  $E(\mathbb{N})$ \cite[Theorem 1]{Arazy85}.
   \item when $\cH$ is a (not necessarily separable) Hilbert space and $\cM=B(\cH)$, Theorem \ref{th:iso} recovers and extends Sourour's result\cite[Theorem 2]{Sourour}.
       \item when $\cM$ is a hyperfinite type $II$-factor,
       Theorem \ref{th:iso} extends \cite[Theorem 4.1]{Sukochev}, which was  established under the assumption that $\cH$ is separable.
   \item  when $E=F=L_p$,
   Theorem \ref{th:iso} extends and complements results in e.g. \cite{Sherman,Yeadon,Tam,Arazy,Katavolos2}.
   \item when $E$ and $F$ are Lorentz spaces, \cite[Theorem 6.1]{CMS} (see also \cite{KM,CHL,MS2,JC2}) is recovered.
   \item when $T$ is a positive isometry, several results are recovered and  extended (see e.g. \cite[Theorem 1]{Abramovich},  \cite[Theorem 3.1]{CMS}, \cite[Theorem 6]{SV},  \cite[Corollaries 5.4 and 5.5]{HSZ} and \cite{JC}).
       \item when $\cM$ is an atomic von Neumann algebra, Theorem \ref{th:iso} extends the main result in \cite{Medzhitov},
           which was established under the assumption that $\cM$ is a $\sigma$-finite von Neumann algebra.
 \end{enumerate}
\end{rem}

It is shown by Zaidenberg\cite{Zaidenberg} (see also \cite{FJ}) that   under certain conditions, every surjective isometry between two complex symmetric function spaces on a $\sigma$-finite atomless measure space   can be represented  in the form of \eqref{formf} (see \cite{Arazy85} for the case of symmetric sequence spaces).
In this respect,
the condition    imposed on the von Neumann algebras in Theorems \ref{th:her} and \ref{th:iso}
is very natural.
One may   expect that
Theorems \ref{th:her} and \ref{th:iso} (and \cite[Theorem 1]{Zaidenberg}) can be proved in the setting of more general von Neumann algebra, e.g., $\cM=\cM_1\oplus \cM_2$, where $\cM_1$ is an atomless von Neumann algebra (or $\cM_1$ is an atomic von Neumann algebra with all atoms having the same trace but the trace of an atom in $\cM_1$ is different from that in $\cM_2$) and $\cM_2$ is an atomic von Neumann algebra with all atoms having the same trace.
 See e.g. \cite[Definition 5.3.1]{FJ} or \cite{Bennett_S} for definition of symmetric function spaces on general measure spaces.
However,
the following simple example shows that for an atomic von Neumann algebra (or measure space) whose atoms have different measures, isometries may have different forms from \eqref{formf}.
 This demonstrates that the condition   imposed on the von Neumann algebras in Theorems \ref{th:her} and \ref{th:iso} is sharp.
For an arbitrary (not necessarily atomless or atomic with all atoms having the same trace) semifinite von Neumann algebras $\cM$,  it is interesting to characterize those symmetric
operator spaces $E(\cM,\tau)$ such that all
  isometries on $E(\cM,\tau)$ have elementary forms.

\begin{example}\label{exam}
Let $(\Omega,m)$ be a measure space consisting of two atoms $e_1$ and $e_2$.
Assume that $m(e_1)=1$ and $m(e_2)=2$.
Then, there exists a symmetric space $E(\Omega)$ which is not proportional to $L_2(\Omega)$
but it is isometric to the $2$-dimensional Hilbert space.
In particular, there exists a non-elementary isometry    and a  hermitian operatora   on $E(\Omega)$ which can not be written as   a  multiplication   of an element in $L_\infty (\Omega)$.
\end{example}
\begin{proof}
A non-trivial projection in $E(\Omega)$ must be $e_1$, $e_2$ or $e_1+e_2$,
where $e_1$ (resp. $e_2$ and $e_1+e_2$) denotes the indicator function on $e_1$ (resp. $e_2$ and $e_1+e_2$).

For an element $ae_1+be_2$, we
define a norm by
\begin{align}
\label{Edef}\norm{ae_1+be_2} _E := \sqrt{|a|^2 + 3 |b|^2}.
\end{align}
Indeed, this can be considered as the $L_2$-norm on a measure space having an atom of measure $1$ and the other of measure $\sqrt{3}$.
We claim that $\norm{\cdot}_E$ is symmetric.
Indeed, assume that
$ x:=a_1e_1 +a_2e_2\ge 0$ and $ y:=b_1e_1 +b_2e_2\ge 0 $ and $\mu(x)\le \mu(y)$.

If $b_1 \ge b_2$, then there are 2 possible cases:
\begin{enumerate}
  \item If $a_1\ge a_2$, then $b_1\ge a_1$ and $b_2\ge a_2$. In this case, we have   $\norm{x}_E\le \norm{y}_E$.
  \item $a_1\le a_2$. Since $m(e_1 )\le m(e_2)$ and $b_1\ge a_2$, it follows that $b_2\ge a_2\ge a_1$. Hence, $\norm{x}_E\le \norm{y}_E$.
\end{enumerate}

If $b_1\le b_2$, then there are 2 possible cases:
\begin{enumerate}
  \item If $a_1\ge a_2$, then $b_2\ge a_1\ge a_2 $ and $b_1\ge a_2$.
Note that $\norm{x}_E^2 = a_1^2 + 3 a_2^2 $ and $\norm{y}_E^2 = b_1^2 + 3 b_2^2 $.
We obtain that
$$\norm{y}_E^2-\norm{x}_E^2  = 3 b_2^2-a_1^2  +  b_1^2 -3 a_2^2 \ge  (3-1)b_2^2 -(3 -1) a_2^2\ge 0$$
  \item If $a_1\le a_2$, then $b_2\ge a_2$ and $b_1\ge a_1$.
Hence, $\norm{x}_E\le \norm{y}_E$.
\end{enumerate}

Consider the   matrix $T:=\left(
                              \begin{array}{cc}
                                -\frac{i}{\sqrt2} & \frac{\sqrt3 }{\sqrt2}\\
                                \frac{i}{\sqrt2 \cdot \sqrt3} & \frac{1}{\sqrt2} \\
                              \end{array}
                            \right)$.
                            That is,
$T(e_1)= -\frac{i}{\sqrt2}e_1 + \frac{i}{\sqrt2} \frac{1}{\sqrt3} e_2 $, $T(e_2 )= \sqrt{3} \frac{1}{\sqrt2}e_1 + \frac{1}{\sqrt2}e_2  $, $T(1)=( \frac{\sqrt3-i}{\sqrt2}  ,  \frac{i+\sqrt3 }{\sqrt2 \cdot \sqrt3} ) $.
 For any $a,b\in \mathbb{C}$, we have
\begin{align*}
\norm{T(ae_1+be_2)}_E^ 2    &= \norm{
 \left(
 \frac{-i a}{\sqrt{2} }+\frac{\sqrt{3}b}{\sqrt{2}}
 \right) e_1+
 \left(  \frac{ ia }{\sqrt{2}\sqrt{3}}  +\frac{b}{\sqrt {2}}
 \right)e_2
 }_E^ 2    \\
 & =
 \left|\frac{-i a}{\sqrt{2} }+\frac{\sqrt{3}b}{\sqrt{2}}   \right|^2 + 3 \left|  \frac{ ia }{\sqrt{2}\sqrt{3}}  +\frac{b}{\sqrt {2}}
 \right|^2 .
 \end{align*}
 By the Parallelogram law, we obtain that
 \begin{align*}
 \norm{T(ae_1+be_2)}_E^ 2
  =   |a|^2 +3 |b|^2 \stackrel{\eqref{Edef}}{=} \norm{ae_1+be_2}_E^ 2 .
 \end{align*}
 This implies that $T$ is an isometry on $E(\Omega)$.

 Assume that $T$ can be   written as an elementary form, that is, there exists a element in $E(\Omega)$ and a Jordan isomorphism on $L_\infty (\Omega)$ such that  $T=BJ$.
Since $J(e_1+e_2)=e_1+e_2$, it follows that
$$J(e_1)=e_1 ,~ J(e_2)=e_2$$
 or
 $$J(e_1)=e_2, ~J(e_2)=e_1.$$
However,
$T(e_1) =-\frac{i}{\sqrt2}e_1 + \frac{i}{\sqrt2} \frac{1}{\sqrt3} e_2 \ne BJ(e_1) $ for any $B\in E(\Omega)$.
Hence, $T$ cannot be   written in  an elementary form.


Consider a hermitian operator $T$ on $E(\Omega)$
 defined by
$$T:= \left(
        \begin{array}{cc}
          1 & i\cdot \sqrt{3} \\
          -\frac{i}{\sqrt{3}} & 1 \\
        \end{array}
      \right),
$$
i.e., $T(e_1) = e_1-\frac{i}{\sqrt{3}}e_2$ and $T(e_2)=i\cdot \sqrt{3} e_1 +e_2 $.
Assume that $T(x) =ax $ for some $a \in E(\Omega)$.
It follows that $e_1-\frac{i}{\sqrt{3}}e_2 = T(e_1)= ae_1 =\lambda e_1$ for some number $\lambda \in \mathbb{C}$, which is a contradiction. 
\end{proof}

\begin{remark}
Recall that Zaidenberg's description of isometries on complex symmetric function spaces
only requires
 that a symmetric function space has a Fatou norm, which is a slightly weaker assumption than  the requirement that this space has an order continuous  norm\cite[Theorem 5.3.5]{FJ}.
Throughout this paper, we always consider   symmetric   spaces having order continuous norms.
It will be interesting to
verify Theorem \ref{th:iso} under a slightly    weaker assumption that the symmetric operator spaces have Fatou norms only.
This problem is yet unsolved.
There are some partial answers  in this direction obtained in  \cite{AC,AC2,LPS,CLS} in the setting of $B(\cH)$.
\end{remark}

\begin{rem}

The   structure of (real or complex) symmetric sequence space (under the assumption that the spaces in question have the Fatou property, which is a stronger assumption than that of Fatou norm) has been discussed by
Braverman and Semenov\cite{BS74,BS75}, and   by Arazy\cite{Arazy85}.
Abramovich and Zaidenberg \cite[Theorem 1]{AZ} showed   $L_p[0,1]$, $1\le p<\infty$, has a unique structure up to an isometry.
The uniqueness of symmetric structure of separable complex symmetric function spaces on $[0,1]$ was obtained by Zaidenberg\cite{Z77}.
By a generalized Zaidenberg's theorem \cite[Theorem 1 and Proposition 3]{R} (see also \cite[Theorem 7.2]{Kalton_R}), the uniqueness of the symmetric structure of  separable  real symmetric  function spaces under some technical conditions  is obtained by Randrianantoanina\cite{R}.
\end{rem}
\begin{rem}
Note that for the case when $\cM$ is a $II_\infty$ factor, Corollary \ref{unique} may  fail because the Jordan $*$-isomorphism $J$ on $\cM$ may not be trace-preserving. This is an oversight in the proof of \cite[Theorem 4.1]{Sukochev}.
Indeed, letting $R_0=\otimes_{1\le n<\infty} \mathbb{M}_2$ be the hyperfinite $II_1$-factor equipped with the faithful normal  tracial state $\tau$, we consider the hyperfinite $II_\infty$-factor $\cM= B(\cH) \otimes R_0$ equipped with the trace ${\rm Tr}\otimes \tau$.
Let $\phi_1 $ be a    $*$-isomorphism from $ (R_0,\tau)  $ onto $(\otimes_{2\le n<\infty} \mathbb{M}_2,\tau)$,
 and $\phi_2$ be a natural
$*$-isomorphism from $B(\cH)\otimes {\bf 1}_{R_0}$ onto $B(\cH)\otimes \mathbb{M}_2\otimes {\bf 1}_{\otimes_{2\le n<\infty} \mathbb{M}_2}$.
Clearly, $\phi_1\otimes \phi_2$ is a   $*$-isomorphism on $\cM$ which does not preserves traces ${\rm Tr}\otimes \tau $.
 Indeed,
$\phi_2$ maps atoms in $B(\cH)\otimes {\bf 1}_{R_0} $ to atoms in $B(\cH)\otimes \mathbb{M}_2 \otimes {\bf 1}_{\otimes_{2\le n<\infty} \mathbb{M}_2}$.
Let $p\in B(\cH)$ be an atom.
In particular, ${\rm Tr}(p)=({\rm Tr}\otimes \tau  ) (p\otimes {\bf 1}_{R_0} )=1$ and $({\rm Tr}\otimes \tau  ) (\phi_1(p) \otimes   {\bf 1}_{\otimes_{2\le n<\infty} \mathbb{M}_2})=\frac12$.
This oversight in \cite[Theorem 4.1]{Sukochev} is rectified by Theorem \ref{th:iso} above.
It is natural to compare this result  with \cite[Theorem 5.3.5]{FJ} (see also \eqref{formf} and \cite[Main Theorem]{CHL}), where the set-isomorphism may not necessarily  preserve the measure.

\end{rem}

\appendix
\section{ }
 \label{appendix}

In this appendix, we  extend  \cite[Theorem 3.1]{Sukochev} and \cite[Lemma 2]{Sourour}
to the setting of arbitrary von Neumann algebras. Our technique is different from that used in \cite{Sukochev,Sourour}.
We are grateful to Dmitriy Zanin for providing us with a correction of our initial argument and allowing us to use it in this paper.

Let $\cM$ be a  von Neumann algebra.
Let $p$ be a projection in   $\cM$. We denote  by $z(p)$ the central support of $p$.
\begin{lem}\cite[Theorem 1.10.7]{Sakai71}\label{central support lemma} Let $p,q\in\mathcal{M}$ be projections  such that $$pyq=0, ~\forall y\in\mathcal{M}.$$
 We have $z(p)z(q)=0.$
\end{lem}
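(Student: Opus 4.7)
The plan is to reduce the statement to the standard spatial characterization of the central support: for a projection $p\in\mathcal{M}$, the central support $z(p)$ is the projection onto $\overline{\mathcal{M} p \mathcal{H}}$. First I would verify this characterization. Let $e$ denote the projection onto $\overline{\mathcal{M} p \mathcal{H}}$. This subspace is $\mathcal{M}$-invariant by construction, so $e\in\mathcal{M}'$. It is also $\mathcal{M}'$-invariant: for $m'\in\mathcal{M}'$ and $m\in\mathcal{M},~\xi\in\mathcal{H}$, since $p\in\mathcal{M}$ commutes with $m'$, one has $m'(mp\xi)=mp(m'\xi)\in\mathcal{M} p \mathcal{H}$. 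Hence $e\in\mathcal{M}\cap\mathcal{M}'=Z(\mathcal{M})$. Clearly $e\geq p$ (take $m=\mathbf{1}$), and any central projection $f\geq p$ satisfies $f\mathcal{M} p\mathcal{H}=\mathcal{M}(fp)\mathcal{H}=\mathcal{M} p\mathcal{H}$, so $f\geq e$; thus $e=z(p)$.

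Second, I would translate the hypothesis. The assumption $pyq=0$ for all $y\in\mathcal{M}$ says precisely that $\mathcal{M} q\mathcal{H}\subseteq\ker p=(\mathbf{1}-p)\mathcal{H}$. Passing to the norm closure and using the characterization just established,
\[
z(q)\mathcal{H}=\overline{\mathcal{M} q\mathcal{H}}\subseteq\ker p,
\]
which is equivalent to the operator identity $p\,z(q)=0$.

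Finally, since $z(q)\in Z(\mathcal{M})$, the projection $\mathbf{1}-z(q)$ is also central, and $p\,z(q)=0$ is the same as $p\leq\mathbf{1}-z(q)$. By the minimality property of the central support, $z(p)\leq\mathbf{1}-z(q)$, i.e.\ $z(p)\,z(q)=0$, which is the desired conclusion.

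The argument is essentially a direct manipulation once the spatial description of $z(\cdot)$ is available, so there is no real obstacle; the only point that needs a moment's care is the verification that $\overline{\mathcal{M} q\mathcal{H}}$ is $\mathcal{M}'$-invariant, which relies crucially on $q\in\mathcal{M}$ so that $q$ commutes with every element of $\mathcal{M}'$.
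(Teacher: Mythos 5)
Your proof is correct and complete. The paper gives no argument of its own for this lemma (it simply cites Sakai, Theorem 1.10.7), and your derivation via the spatial characterization $z(p)\mathcal{H}=\overline{\mathcal{M}p\mathcal{H}}$ is exactly the standard argument used in the cited reference (and in Kadison--Ringrose, Proposition 5.5.2), so there is nothing to add.
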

	

\begin{lem}\label{first computational lemma}	
Let $a,b,e,f\in\mathcal{M}$ be self-adjoint and such that
$$ey+yf=ayb,\quad \forall y\in\mathcal{M}.$$
We have
\begin{enumerate}
\item $[a,b]=0.$
\item $[b,[a,y]]=0$ for every $y\in\mathcal{M}.$
\end{enumerate}
\end{lem}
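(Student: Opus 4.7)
The plan is to exploit the universal identity $ey + yf = ayb$ via two elementary manipulations: specialization to $y = \mathbf{1}$, and replacement of $y$ by $y^*$ combined with taking adjoints. These two operations reduce both assertions to bare algebraic cancellations, with no spectral or topological input from $\mathcal{M}$ needed.

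For (1), setting $y = \mathbf{1}$ in the hypothesis immediately gives $e + f = ab$. Since $e$ and $f$ are self-adjoint, so is the left-hand side, and therefore $ab = (ab)^* = b^*a^* = ba$; this is precisely $[a,b] = 0$.

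For (2), the key observation is an adjoint companion identity. Applying the hypothesis to $y^*$ in place of $y$ and then taking adjoints, and using self-adjointness of $a,b,e,f$, yields
\[
ye + fy = bya, \qquad \forall y \in \mathcal{M};
\]
call this $(\ast)$. Next expand
\[
[b,[a,y]] \;=\; bay - bya - ayb + yab.
\]
By (1), $ab = ba$, so $bay = aby = (e+f)y = ey + fy$ and $yab = y(e+f) = ye + yf$. By the hypothesis, $ayb = ey + yf$, and by $(\ast)$, $bya = ye + fy$. Substituting these four expressions into the expansion produces eight terms that cancel in pairs, leaving $[b,[a,y]] = 0$.

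I do not anticipate any real obstacle. The only subtlety is recognising that $(\ast)$ is available from the self-adjointness hypothesis, since without it the cancellation in the expansion of $[b,[a,y]]$ would not close; once $(\ast)$ is in hand, both claims follow by pure symbolic manipulation.
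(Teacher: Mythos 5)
Your proof is correct and follows essentially the same route as the paper: part (1) is identical, and for part (2) both arguments rest on pairing the hypothesis with its adjoint version (obtained by applying it to $y^*$ and taking adjoints). The paper packages this as $[e,y]=[a,y]b=b[a,y]$ after substituting $f=ab-e$, whereas you expand $[b,[a,y]]$ into four terms and cancel; the computation is a reorganization of the same idea.
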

\begin{proof} Setting $y={\bf 1} ,$ we obtain $e+f=ab.$ Taking adjoint, we obtain $e+f=ba.$ Thus, $ab=ba.$ This proves the first assertion.
	
Substituting $f=ab-e,$ we obtain
$$[e,y]=[a,y]b,\quad y\in\mathcal{M}.$$
Taking adjoints, we obtain
$$[e,y]=b[a,y],\quad y\in\mathcal{M}.$$
Comparing the right hand sides, we establish the second assertion.
\end{proof}

\begin{lem}\label{second computational lemma} Let $a,b\in\mathcal{M}$ be commuting self-adjoint elements such that
$$[b,[a,y]]=0,\quad \forall y\in\mathcal{M}.$$
We have
$$[p,[q,y]]=0,\quad \forall y\in\mathcal{M}$$
for all spectral projections $p$ and $q$ of $a$ and $b,$ respectively.
\end{lem}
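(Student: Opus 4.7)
My plan is to exploit the symmetry of the hypothesis, combined with Borel functional calculus in the von Neumann algebras $W^*(a)$ and $W^*(b)$, to propagate the commutation relation from $(a,b)$ to arbitrary spectral projections $(p,q)$. Three applications of the Jacobi identity, alternating with the passage from a self-adjoint generator to its spectral projections, should close the argument.

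First, I would symmetrize the hypothesis. Since $[a,b]=0$, the Jacobi identity
\[
[b,[a,y]]+[a,[y,b]]+[y,[b,a]]=0
\]
reduces to $[a,[b,y]]=[b,[a,y]]=0$ for every $y\in\mathcal{M}$. Thus $[b,y]\in\{a\}'$ for every $y$. Because $p=\chi_E(a)$ lies in $W^*(a)=\{a\}''$, and everything in $\{a\}''$ commutes with everything in $\{a\}'=\{a\}'''$, we conclude that $[p,[b,y]]=0$ for every $y\in\mathcal{M}$.

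Next, I would iterate the same idea after swapping the roles of $a$ and $b$, using $p$ instead of $a$. Note that $p\in\{a\}''$ and $b\in\{a\}'$, so $[p,b]=0$. A second application of Jacobi to $[p,[b,y]]=0$, this time with the bracket $[p,b]$ vanishing, gives $[b,[p,y]]=0$ for every $y$. Hence $[p,y]\in\{b\}'$ for every $y$, and the same functional-calculus argument as before yields $[q,[p,y]]=0$ for any spectral projection $q=\chi_F(b)$ of $b$. Finally, since both $p$ and $q$ lie in the abelian algebra $W^*(a,b)$, one has $[p,q]=0$, and a third use of Jacobi converts $[q,[p,y]]=0$ into the desired $[p,[q,y]]=0$.

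The only substantive point is the implication ``$a$ commutes with $z$ $\Longrightarrow$ $f(a)$ commutes with $z$'' applied to $z=[b,y]$ (and its analogue with $a,b$ interchanged); this is an immediate consequence of $W^*(a)=\{a\}''\subseteq\{z\}'$. I do not foresee a serious obstacle, the main care being bookkeeping: at each stage one must verify that the relevant pair ($\{a,b\}$, then $\{p,b\}$, then $\{p,q\}$) actually commutes so that the Jacobi identity collapses to the required implication.
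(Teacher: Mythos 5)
Your proof is correct and follows essentially the same strategy as the paper's: alternating applications of the Jacobi identity (collapsing because $[a,b]=0$, $[p,b]=0$ and $[p,q]=0$ in turn) with the passage from a self-adjoint generator to its spectral projections. The only differences are cosmetic: where the paper justifies that passage by an explicit approximation chain (powers of $b$, then polynomials, then norm- and weak-density), you invoke the bicommutant relation directly via $p\in\{a\}''$ and $[b,y]\in\{a\}'$, and you spend one extra Jacobi step symmetrizing the hypothesis at the outset instead of starting, as the paper does, from the immediate consequence $[q,[a,y]]=0$.
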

\begin{proof} Note that for all $y\in\mathcal{M}$, we have
$$[b^n,[a,y]]=
[b, [b^{n-1},[a,y]]] + [ [b,[a,y]], b^{n-1} ]=
[b, [b^{n-1},[a,y]]] =\cdots =  [ b, [b,\cdots [b,[a,y]]\cdots ] ]
=0. $$
By linearity,
$$[P(b),[a,y]]=0,\quad y\in\mathcal{M},$$
for every polynomial $P.$ Since polynomials are norm-dense in the algebra of continuous functions, it follows that
$$[x,[a,y]]=0,\quad y\in\mathcal{M},$$
for every $x$ in the $C^{\ast}$-algebra generated by $b.$ By weak continuity of our equation,
$$[x,[a,y]]=0,\quad y\in\mathcal{M},$$
for every $x$ in the von Neumann algebra generated by $b.$ In particular,
$$[q,[a,y]]=0,\quad y\in\mathcal{M}.$$
Using the Leibniz rule
$$ [q,[a,y]]+[a,[y,q]]+[y,[q,a]]=0   $$
and taking into account that $[q,a]=0,$   we have
$$[a,[q,y]]=0,\quad y\in\mathcal{M}.$$
Repeating the argument in the first paragraph, we complete the proof.
\end{proof}

\begin{lem}\label{main projection lemma} If $p,q\in\mathcal{M}$ are commuting projections such that
\begin{align}\label{A2}
[p,[q,y]]=0,\quad \forall y\in\mathcal{M},
\end{align}
then
$$z(p)z(q)z({\bf 1}-p)z({\bf 1}-q)=0.$$
\end{lem}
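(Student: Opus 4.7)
The plan is to exploit the fact that the commuting projections $p$ and $q$ decompose $\mathbf{1}$ into four mutually orthogonal projections
$$e_1 := pq,\qquad e_2 := p-pq,\qquad e_3 := q-pq,\qquad e_4 := \mathbf{1}-p-q+pq,$$
and then to translate the hypothesis \eqref{A2} into vanishing conditions on specific corners $e_i\mathcal{M}e_j$. First I would observe that $pe_i = \delta_i e_i$ and $qe_i = \varepsilon_i e_i$, where $(\delta_i)_{i=1}^4 = (1,1,0,0)$ and $(\varepsilon_i)_{i=1}^4 = (1,0,1,0)$, so that for every $y\in\mathcal{M}$ and all indices $i,j$,
$$[p,[q,e_i y e_j]] = (\varepsilon_i-\varepsilon_j)(\delta_i-\delta_j)\,e_i y e_j.$$

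Next, applying \eqref{A2} to the element $e_i y e_j$ and inspecting the sixteen coefficients, one checks that $(\varepsilon_i-\varepsilon_j)(\delta_i-\delta_j)\neq 0$ precisely when $\{i,j\}=\{1,4\}$ or $\{i,j\}=\{2,3\}$. Hence
$$e_1\mathcal{M}e_4 = e_4\mathcal{M}e_1 = 0,\qquad e_2\mathcal{M}e_3 = e_3\mathcal{M}e_2 = 0,$$
and Lemma \ref{central support lemma} delivers $z(e_1)z(e_4)=0$ and $z(e_2)z(e_3)=0$.

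To conclude, I would use the identity $z(e+f)=z(e)\vee z(f)$ for mutually orthogonal projections, which gives
$$z(p)=z(e_1)\vee z(e_2),\quad z(q)=z(e_1)\vee z(e_3),\quad z(\mathbf{1}-p)=z(e_3)\vee z(e_4),\quad z(\mathbf{1}-q)=z(e_2)\vee z(e_4).$$
Since central projections pairwise commute, the distributive identity $(a\vee b)(a\vee c)=a\vee(bc)$ applies, and combined with $z(e_2)z(e_3)=0$ it yields
$$z(p)\,z(q) = z(e_1)\vee\bigl(z(e_2)z(e_3)\bigr) = z(e_1),\qquad z(\mathbf{1}-p)\,z(\mathbf{1}-q) = z(e_4)\vee\bigl(z(e_3)z(e_2)\bigr) = z(e_4),$$
whence $z(p)z(q)z(\mathbf{1}-p)z(\mathbf{1}-q) = z(e_1)z(e_4)=0$.

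The only delicate point is the passage from \eqref{A2} holding on all of $\mathcal{M}$ to the vanishing of each individual corner $e_i\mathcal{M}e_j$ isolated above; this is legitimate because the subspaces $e_i\mathcal{M}e_j$ for distinct index pairs are linearly independent inside $\mathcal{M}$, as one sees by compressing by $e_k$ on the left and $e_l$ on the right. Everything else reduces to routine lattice manipulations with central projections.
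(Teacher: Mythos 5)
Your proof is correct, and it takes a genuinely different and noticeably shorter route than the paper. The paper argues by contradiction: it first reduces to the case where all four central supports equal $\mathbf{1}$, substitutes $y\mapsto(\mathbf{1}-q)y(\mathbf{1}-p)$ to obtain only the single corner identity $(p-pq)\mathcal{M}(q-pq)=0$, and then runs a case analysis on the three central projections $w_1=z(p(\mathbf{1}-q))$, $w_2=z((\mathbf{1}-p)q)$, $w_3=\mathbf{1}-w_1-w_2$, establishing order relations such as $qw_1\le pw_1$ before showing each $w_i$ vanishes. You instead work with the full Peirce-type decomposition $e_1,\dots,e_4$, read off from the eigenvalue computation $[p,[q,e_iye_j]]=(\varepsilon_i-\varepsilon_j)(\delta_i-\delta_j)e_iye_j$ that \emph{both} off-diagonal blocks $e_1\mathcal{M}e_4$ and $e_2\mathcal{M}e_3$ vanish, and then finish by pure lattice algebra with central supports ($z(e+f)=z(e)\vee z(f)$ and $(a\vee b)(a\vee c)=a\vee(bc)$), yielding $z(p)z(q)=z(e_1)$ and $z(\mathbf{1}-p)z(\mathbf{1}-q)=z(e_4)$ directly. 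Both arguments hinge on the same key ingredient, Lemma \ref{central support lemma}; yours avoids the contradiction setup and the reduction step entirely and is, in my view, more transparent. One small remark: your closing caveat about linear independence of the corners is unnecessary as written, since you may simply apply \eqref{A2} with $y$ replaced by the element $e_iye_j\in\mathcal{M}$ — no decomposition of a general $y$ is ever needed.
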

\begin{proof}
Denote
$$z':=z(p)z(q)z({\bf 1}-p)z({\bf 1} -q).$$
Assume by contradiction that $z'\neq 0.$ By \eqref{A2}, we have
$$[pz',[qz',y]]=0,\quad y\in  \mathcal{M}_{z'}$$
and
$$z(pz')z(qz')z(z'-pz')z(z'-qz')\stackrel{\tiny \mbox{\cite[Proposition 5.5.3]{KR}}}{=} z'\cdot z(p)z(q)z({\bf1}-p)z({\bf 1} -q)=z'.$$
Hence, by passing to the reduced von Neumann algebra $\cM_{z'}$,  we may assume without loss of generality that $z'={\bf 1} .$ In other words,
\begin{align}\label{assume}
z(p)=z(q)=z({\bf 1}-p)=z({\bf 1}-q)={\bf 1} .
\end{align}

Obviously, the assumption \eqref{A2} is equivalent to
\begin{equation}\label{mpl eq0}
pqy+ypq=pyq+qyp.
\end{equation}

Replacing $y$ in \eqref{mpl eq0} with $({\bf 1}-q)y({\bf 1} -p),$ we obtain
$$0+0=p({\bf 1}-q)y({\bf 1}-p)q+0,\quad y\in\mathcal{M}.$$
By Lemma \ref{central support lemma}, we have
\begin{align}\label{A3'}
z(p({\bf 1} -q))\cdot z(({\bf 1}-p)q)=0.
\end{align}

Let $w_1:=z(p({\bf 1}-q)),$ $w_2=:z(({\bf 1} -p)q).$ We have $w_1w_2=0.$ Set $w_3:={\bf 1} -w_1-w_2.$
By \eqref{A2}, we have
\begin{align}\label{A3}
[pw_1,[qw_1,y]]=0,\quad y\in w_1\mathcal{M},
\end{align}
$$[pw_2,[qw_2,y]]=0,\quad y\in w_2\mathcal{M},$$
$$[pw_3,[qw_3,y]]=0,\quad y\in w_3\mathcal{M}.$$

{\bf Step 1:} We claim
 that
\begin{align}\label{A4'}qw_1\leq pw_1,~ pw_2\leq qw_2 \mbox{ and }pw_3=qw_3.
\end{align}
Note that
$$z((w_1-pw_1)\cdot qw_1)\stackrel{\tiny \mbox{\cite[Proposition 5.5.3]{KR}}}{=}w_1\cdot z(({\bf 1} -p)q)=w_1\cdot w_2\stackrel{\eqref{A3'}}{=}0.$$
Hence,
\begin{align}\label{A4}
(w_1-pw_1)\cdot qw_1=0.
\end{align}
In other words, $qw_1\leq pw_1.$

Similarly,
$$z(pw_2\cdot (w_2-qw_2))\stackrel{\tiny \mbox{\cite[Proposition 5.5.3]{KR}}}{=} z(p({\bf 1} -q))\cdot w_2=w_1\cdot w_2\stackrel{\eqref{A3'}}{=} 0.$$
Hence,
$$pw_2\cdot (w_2-qw_2)=0.$$
In other words, $pw_2\leq qw_2.$

Arguing similarly, we have $pw_3\leq qw_3$ and $qw_3\leq pw_3.$
This completes the proof of \eqref{A4'}.

{\bf Step 2:} We claim that
\begin{align}
\label{A5}w_1 =0,\quad w_2  =0,\quad w_3  =0.
\end{align}
We only prove the first equality. Proofs of the other $2$ are similar.

By \eqref{A3}, we  have
$$[pw_1,[qw_1,(w_1-pw_1)y]]=0,\quad y\in \mathcal{M}_{w_1}.$$
Since
\begin{align}\label{w1pw1}
 (w_1-pw_1) \cdot qw_1\stackrel{\eqref{A4}}{=} 0,
 \end{align}
it follows that for all  $y\in  \mathcal{M}_{w_1} $, we have
$$0= [pw_1,[qw_1,(w_1-pw_1)y]]= [pw_1,qw_1 (w_1-pw_1)y - (w_1-pw_1)y qw_1   ]   \stackrel{\eqref{w1pw1}}{=}- [pw_1,  (w_1-pw_1)yqw_1 ] .  $$
Since $pw_1\cdot (w_1-pw_1)=0$ and since $qw_1\stackrel{\eqref{A4'}}{\leq} pw_1,$ it follows that
$$(w_1-pw_1)yqw_1=0,\quad y\in   \mathcal{M}_{w_1} .$$
By Lemma \ref{central support lemma}, we have
$$z(w_1-pw_1)\cdot z(qw_1)=0.$$
In other words,
$$w_1 \stackrel{\eqref{assume}}{=}    z({\bf 1}-p)z(q)w_1\stackrel{\tiny \mbox{\cite[Proposition 5.5.3]{KR}}}{=} z(w_1-pw_1)\cdot z(qw_1)=0  .$$
This proves the first equality of \eqref{A5}.


Finally, ${\bf 1}=w_1+w_2+w_3 =0$, which is impossible.
Hence,
$z=0$. This completes the proof.
\end{proof}

\begin{lem} \label{la5}If $a,b\in\mathcal{M}$ are commuting elements such that
$$[b,[a,y]]=0,\quad \forall y\in\mathcal{M},$$
then there exists a  central projection $z$ such that both $a({\bf 1} -z)$ and $bz$ are central.
\end{lem}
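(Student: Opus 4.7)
The plan is to combine the two preceding lemmas with a central-support argument, reducing the question about the commuting elements $a,b$ to one about their spectral projections. Since $a$ and $b$ commute and (by context with Lemma \ref{first computational lemma} and Lemma \ref{second computational lemma}) are self-adjoint, their spectral projections all commute pairwise. Applying Lemma \ref{second computational lemma} to the hypothesis $[b,[a,y]]=0$, we obtain $[p,[q,y]]=0$ for every $y\in\cM$ and every spectral projection $p$ of $a$ and $q$ of $b$. Then Lemma \ref{main projection lemma} yields
$$z(p)z({\bf 1}-p)z(q)z({\bf 1}-q)=0$$
for every such pair.

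The key move is to package all of this information about $a$ (respectively $b$) into a single central ``defect'' projection
$$n_a := \bigvee_{p} z(p)z({\bf 1}-p),\qquad n_b := \bigvee_{q} z(q)z({\bf 1}-q),$$
where the suprema range over spectral projections of $a$ and $b$ respectively. Each $n_a, n_b$ belongs to $Z(\cM)$ as a supremum of central projections, and from the orthogonality relation above one concludes $n_a n_b=0$ by a straightforward supremum argument (for each fixed $q$, the projection $n_a$ is dominated by ${\bf 1}-z(q)z({\bf 1}-q)$, and then take the supremum over $q$). The main technical step, which I view as the most delicate point, is to verify that $a({\bf 1}-n_a)\in Z(\cM)$. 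Setting $w:={\bf 1}-n_a$, for each spectral projection $p$ of $a$ one computes $wz(p)({\bf 1}-p)=wz(p)z({\bf 1}-p)({\bf 1}-p)=0$ (using ${\bf 1}-p\le z({\bf 1}-p)$), so $wz(p)\le p$, and combined with the trivial $pw\le wz(p)$ this forces $pw=wz(p)$, which lies in $Z(\cM)$.

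Finally, setting $z:=n_a$, we have $z\in\cP(Z(\cM))$, and directly $a({\bf 1}-z)=a({\bf 1}-n_a)\in Z(\cM)$ by the claim just established. For the other summand, the symmetric argument gives $b({\bf 1}-n_b)\in Z(\cM)$, and since $n_a n_b=0$ implies $n_a\le {\bf 1}-n_b$, we get $bz=bn_a=b({\bf 1}-n_b)n_a\in Z(\cM)$ as the product of two central elements. I expect the principal obstacle to lie in the spectral-integration step inside the key claim: one must pass from the assertion ``$pw\in Z(\cM)$ for every spectral projection $p$ of $a$'' to ``$aw\in Z(\cM)$'', which follows because $Z(\cM_w)=Z(\cM)w$ and $a$ lies in the von Neumann algebra generated by its spectral projections; care must also be taken to handle unbounded spectrum if $a,b$ are merely assumed self-adjoint rather than bounded, but under the standing assumption $a,b\in\cM$ this is routine.
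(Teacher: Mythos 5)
Your proof is correct and follows essentially the same route as the paper: reduce to spectral projections via Lemma \ref{second computational lemma}, apply Lemma \ref{main projection lemma}, package the obstructions into the central projections $\bigvee_p z(p)z({\bf 1}-p)$ and $\bigvee_q z(q)z({\bf 1}-q)$, and finish with the Spectral Theorem. The only (harmless) difference is the choice of $z$ — you take $z=n_a$ while the paper takes $z={\bf 1}-n_b$ — and your projection-comparison argument for $pw=wz(p)\in Z(\cM)$ is a mild variant of the paper's central-support computation.
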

\begin{proof}
Since   $a$ commutes with $b$,
it follows   from Lemmas \ref{second computational lemma} and  \ref{main projection lemma} that
$$z(p)z(q)z({\bf 1}-p)z({\bf 1}-q)=0$$
for arbitrary spectral projection $p$ (respectively, $q$) of $a$ (respectively, $b$).

Denote, for brevity, $z_q=z(q)z({\bf 1} -q).$ We have
$$z(pz_q)\cdot z(z_q-pz_q)\stackrel{\tiny \mbox{\cite[Proposition 5.5.3]{KR}}}{=} z(p)z({\bf 1}-p)z_q=z(p)z({\bf 1}-p)z(q)z({\bf 1}-q)=0.$$
Therefore,
$$0=z(pz_q) z( z_q-pz_q) \ge z(pz_q) ( z_q-pz_q) \ge  ( z_q-pz_q)  z(pz_q) ( z_q-pz_q)  \ge  ( z_q-pz_q)   pz_q   ( z_q-pz_q)    =0 .$$
This implies that
$$ pz_q \ge  z(pz_q) \ge pz_q ,$$
that is,
\begin{align}\label{centralz}
pz_q =z(pz_q)\in Z(\cM).
\end{align}
Thus, $pz(q)z({\bf 1}-q)=pz_q$ is a  central projection. By the Spectral Theorem, $az(q)z({\bf 1} -q)\in Z(\cM)$.

Define $z'\in \cP(Z(\cM))$ by
$${\bf 1}-z'=\bigvee_q z(q)z({\bf 1}-q),$$
where the supremum is taken over all spectral projections $q$ of  $b$.
We have $a({\bf 1}-z')= a\cdot  \bigvee_q z(q)z({\bf 1}-q) \in Z(\cM) $. On the other hand,   we have (see e.g. \cite[Chapter V, Proposition 1.1]{Tak})
$$z'=\bigwedge ({\bf 1}-z(q)z({\bf 1}-q)).$$
In particular,
\begin{align}\label{z'leq}
z'\leq {\bf 1}-z(q)z({\bf 1}-q)
\end{align}
for every $q.$ Thus,
$$z'-z(z'q)z(z'-z'q)\stackrel{\tiny \mbox{\cite[Proposition 5.5.3]{KR}}}{=} z'\cdot ({\bf 1}-z(q)z({\bf 1}-q))\stackrel{\eqref{z'leq}}{=}z'$$
for every $q$. That is,
$$z(z'q)z(z'-z'q)=0$$
for every $q.$ Hence, $z'q\in Z(\cM)$ for every spectral projection $q$ of $b$ (see e.g. the proof for \eqref{centralz}).  By the Spectral Theorem, $bz'\in Z(\cM)$.
\end{proof}

The following theorem is an immediate consequence of
Lemmas \ref{first computational lemma}	 and \ref{la5}.
  It should be compared with   \cite[Theorem 3.1]{Sukochev} and \cite[Lemma 2]{Sourour}.
\begin{theorem} \label{central decomposition th}
Let $a,b,e,f\in\mathcal{M}$ be self-adjoint and such that
\begin{align}\label{ey+yf}
ey+yf=ayb,\quad \forall y\in\mathcal{M}.
\end{align}
Then there exists  a  central projection $z$ such that $a({\bf 1} -z), e({\bf 1} -z )$ and $bz,fz$ are central.
\end{theorem}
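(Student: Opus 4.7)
The plan is to reduce the theorem directly to the two preparatory lemmas already proved in this appendix, namely Lemma~\ref{first computational lemma} and Lemma~\ref{la5}, and then to bootstrap the resulting central decomposition for $a,b$ to a corresponding decomposition for $e,f$ using the commutator identities that fall out of the defining relation \eqref{ey+yf}.

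First I would invoke Lemma~\ref{first computational lemma}. It applies verbatim to the hypothesis $ey+yf=ayb$, and yields three pieces of information that will drive everything: (i) $a$ commutes with $b$, (ii) $[b,[a,y]]=0$ for every $y\in\cM$, and (iii) the identity $e+f=ab=ba$, together with the commutator formula $[e,y]=[a,y]b=b[a,y]$ that is produced in the course of that proof. Combining (i) and (ii) puts us exactly in the hypothesis of Lemma~\ref{la5}, which then delivers a central projection $z\in\cP(Z(\cM))$ such that $a({\bf 1}-z)\in Z(\cM)$ and $bz\in Z(\cM)$. This handles the two assertions of the theorem concerning $a$ and $b$.

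Next I would derive the analogous statements for $e$ and $f$ by testing commutators against $({\bf 1}-z)$ and $z$ respectively. Since $a({\bf 1}-z)$ is central and $({\bf 1}-z)$ itself is central, one has $[a,y]({\bf 1}-z)=[a({\bf 1}-z),y]=0$ for every $y\in\cM$; feeding this into $[e,y]=b[a,y]$ gives $[e({\bf 1}-z),y]=[e,y]({\bf 1}-z)=0$, so $e({\bf 1}-z)\in Z(\cM)$. For $f$, using $f=ab-e$ and $[a,b]=0$, a short computation produces the dual identity
\[
[f,y]=[ab,y]-[e,y]=b[a,y]+[b,y]a-b[a,y]=[b,y]a,\qquad y\in\cM.
\]
Since $bz$ is central, $[b,y]z=[bz,y]=0$, and because $z$ is central it commutes with $a$, so $[f,y]z=[b,y]az=([b,y]z)a=0$. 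Hence $fz\in Z(\cM)$, completing the proof.

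I do not foresee a serious obstacle at this stage: all the hard work is already isolated in Lemma~\ref{la5}, whose proof through Lemmas~\ref{second computational lemma} and \ref{main projection lemma} is the real content. The only subtlety in the present step is to remember that the same central projection $z$ that works for $a$ and $b$ automatically works for $e$ and $f$, which is exactly what the commutator identities $[e,y]=b[a,y]$ and $[f,y]=[b,y]a$ guarantee once $a({\bf 1}-z)$ and $bz$ are central.
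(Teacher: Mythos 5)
Your proposal is correct and follows essentially the same route as the paper: both obtain the central projection $z$ with $a(\mathbf{1}-z),bz\in Z(\cM)$ by combining Lemma~\ref{first computational lemma} with Lemma~\ref{la5}, and then dispose of $e$ and $f$ by a short computation from the defining relation. The only (cosmetic) difference is that the paper substitutes $y=z$ into \eqref{ey+yf} and argues in the reduced algebra $\cM_z$, whereas you use the global commutator identities $[e,y]=b[a,y]$ and $[f,y]=[b,y]a$; both verifications are equivalent and equally valid.
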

\begin{proof}
By Lemmas \ref{first computational lemma}	 and \ref{la5}, we obtain that there exists $z\in \cP(Z(\cM))$ such that
$a({\bf 1} -z), bz\in Z(\cM)$.
Replacing   $y $ with $ z$ in \eqref{ey+yf}, we obtain that
\begin{align*}
e z+ fz= ab z,\quad \forall y\in\mathcal{M}_z.
\end{align*}
 We have $fz= abz-ez$.
 Hence,
 \begin{align*}
  yfz \stackrel{\eqref{ey+yf}}{=} (ab z-ez)y =fz y,\quad \forall y\in\mathcal{M}_z.
\end{align*}
 This implies that $fz\in Z(\cM)$.
 The same argument  shows that $e({\bf 1} -z )\in Z(\cM)$.
 \end{proof}

\bibliographystyle{amsalpha}

\begin{thebibliography}{99}

\bibitem{Abramovich}
Yu. Abramovich, {\it Isometries of norm lattices},
 Optimizatsiya 43(60) (1988) 74--80 (in Russian).

\bibitem{AZ}
Yu. Abramovich, M. Zaidenberg,
{\it A rearrangement invariant space isometric to $L_p$ coincides with $L_p$, }
 Interaction between functional analysis, harmonic analysis, and probability (Columbia, MO, 1994), 13--18, Lecture Notes in Pure and Appl. Math., 175, Dekker, New York, 1996.


\bibitem{AC}
B. Aminov, V. Chilin,
{\it Isometries and Hermitian operators on complex symmetric sequence spaces,}
Siberian Adv. Math. 27 (2017), 239--252.

\bibitem{AC2}
B. Aminov, V. Chilin,
{\it Isometries of perfect norm ideals of compact operators,}
Studia Math. 241 (2018), 87--99.

\bibitem{Arazy}
J. Arazy,
{\it The isometries of $C_p$,}
Israel J. Math. 22 (1975), 247--256.


\bibitem{Arazy78}
J. Arazy,
{\it Isomorphisms of unitary matrix spaces},
Banach Space Theory and Its Applications (Bucharest, 1981), pp. 1--6, Lecture Notes in Math., 991, Springer--Verlag, Berlin--New York 1983.


\bibitem{Arazy81}
J. Arazy,
 {\it Basic sequences, embeddings, and the uniqueness of the symmetric structure in unitary matrix spaces,}
  J. Funct. Anal. 40 (1981),  302--340.


  \bibitem{Arazy85}
  J. Arazy,
  {\it Isometries of complex symmetric sequence spaces,}
  Math. Z. 188 (1985), 427--431.

\bibitem{Arveson}
W. Arveson,
{\it On groups of automorphisms of operator algebras,}
J. Funct. Anal. 15 (1974), 217--243.


\bibitem{Banach}
   S. Banach, {\it Th\'{e}orie des op\'{e}rations lin\'{e}aires,}
   Warszawa (1932).
    \bibitem{Bennett_S} C. Bennett, R. Sharpley, {\it Interpolation of operators,} Academic Press, Boston, 1988.

       \bibitem{BHLS}
       A. Ber, J. Huang,  G. Levitina, F. Sukochev,
       {\it Derivations with values in ideals of semifinite von Neumann algebras,}
        J. Funct. Anal.  272 (12) (2017), 4984--4997.
       \bibitem{BHLS2}
       A. Ber, J. Huang,  G. Levitina, F. Sukochev,
       {\it
        Derivations with values in the  ideal of $\tau$-compact operators affiliated with  a semifinite von Neumann algebra,}  Comm. Math. Phys. 390(2) (2022),  577--616.


\bibitem{Berkson}
E. Berkson,
{\it Some types of Banach spaces, Hermitian operators, and Bade functions,}
Trans. Amer. Math. Soc. 116 (1965), 376--385.


\bibitem{Blackadar}
B. Blackadar,
{\it Operator algebras. Theory of $C^*$-algebras and von Neumann algebras},  Encyclopaedia of Mathematical Sciences, 122. Operator Algebras and Non-commutative Geometry, III. Springer-Verlag, Berlin, 2006.

\bibitem{BJM}
F. Botelho, J. Jamison, L. Moln\'{a}r,
{\it Surjective isometries on Grassmann spaces,}
J. Funct. Anal. 265 (2013), 2226--2238.

\bibitem{BR} O. Bratteli, D. Robinson, {\it Operator algebras and quantum statistical mechanics, I}, New York, Springer-Verlag,  1979.

\bibitem{BS74}
M. Braverman, E. Semenov,
{\it Isometries of symmetric spaces,} (Russian)
Dokl. Akad. Nauk SSSR 217 (1974), 257--259.

\bibitem{BS75}
M. Braverman, E. Semenov,
{\it Isometries of symmetric spaces,} (Russian)
Vorone\u{z}. Gos. Univ. Trudy Nau\u{c}n.-Issled. Inst. Mat. VGU Vyp. 17 Teor. Operator. Uravneni{\u\i} (1975), 7--18.

\bibitem{Broise}
M. Broise,
{\it Sur les isomorphismes de certaines alg\`{e}bres de von Neumann,}
Ann. Sci. \'{E}cole Norm. Sup. (3) 83 (1966), 91---111.



\bibitem{CHL}
N. Carothers, R. Haydon,
P.-K.  Lin,
{\it On the isometries of the Lorentz function spaces,}
Israel J. Math. 84 (1993), 265--387.

\bibitem{CLS}
J.-T. Chan, C.-K. Li, N.-S. Sze,
{\it Isometries for unitarily invariant norms,}
Linear Algebra Appl. 399 (2005), 53--70.



\bibitem{CMS}
V. Chilin, A. Medzhitov, F. Sukochev,
{\it Isometries of non-commutative Lorentz spaces,}
Math. Z. 200 (1989), 527--545.

\bibitem{Conway}
J. Conway,
{\it A course in functional analysis,}
Springer-Verlag, New York, 1997.




\bibitem{D50}
J. Dixmier,
{\it Les fonctionelles lin\'{e}aires sur l'ensemble des op\'{e}rateurs born\'{e}s d'un espace de Hilbert,}
Ann. of Math. 51 (1950), 387--408.

\bibitem{Dixmier}
J. Dixmier,
{\it Les algebres d'operateurs dans l'Espace Hilbertien, 2nd ed.,}
Gauthier-Vallars, Paris, 1969.

\bibitem{DD95}
P. Dodds, T. Dodds,
{\it On a submajorization inequality of T. Ando,}
Operator theory in function spaces and Banach lattices, 113--131,
Oper. Theory Adv. Appl., 75, Birkh\"{a}user, Basel, 1995.

\bibitem{DDP}
P. Dodds, T. Dodds, B. de Pagter,
{\it Non-commutative Banach function spaces,}
Math. Z. 201 (1989), 583--597.

\bibitem{DDP93}
P.  Dodds, T. Dodds,  B. de Pagter,
\textit{Noncommutative K\"othe duality},
Trans. Amer. Math. Soc.   339  (1993), 717--750.


\bibitem{DP2012}
P. Dodds, B. de Pagter,
{\it The non-commutative Yosida--Hewitt decomposition theorem revisited,}
Trans. Amer. Math. Soc. 364 (2012), 6425--6457.

\bibitem{DP2014}
P. Dodds, B. de Pagter, {\it Normed K\"{o}the spaces: A non-commutative viewpoint,}
Indag. Math. 25 (2014), 206--249.


\bibitem{DPS}
P. Dodds, B. de Pagter, F. Sukochev,
{\it Theory of noncommutative integration,}
to appear.

\bibitem{DPS2016}
P. Dodds, B. de Pagter, F. Sukochev,
{\it Sets of uniformly absolutely continuous norm in symmetric spaces of measurable operators,}
Trans. Amer. Math. Soc. 368 (2016), 4315--4355.

\bibitem{Douglas}
R. Douglas,
{\it Banach algebra techniques in operator theory,}
Academic Press, New York, 1972.





\bibitem{FK}
T. Fack, H. Kosaki,
{\it Generalized $s$-numbers of $\tau$-measurable operators,}
Pacific J. Math. 123 (2) (1986), 269--300.

\bibitem{FJ89}
R.  Fleming,  J. Jamison,
{\it Hermitian operators and isometries on sums of Banach spaces,}
Proc. Edinburgh Math. Soc. 12 (1989), 169--191.

\bibitem{FJ}
R.  Fleming,  J. Jamison,
 {\it Isometries on Banach spaces: function spaces},
Monographs and Surveys in Pure and Applied Mathematics, 129. Chapman $\&$  Hall/CRC, Boca Raton, FL, 2003.

   \bibitem{FJ2}
R.  Fleming,  J. Jamison,
 {\it Isometries on Banach spaces:  Vector-valued function spaces,} Vol. 2.
Monographs and Surveys in Pure and Applied Mathematics, 138. Chapman $\&$  Hall/CRC, Boca Raton, FL, 2008.






\bibitem{GS1}
G. Geh\'{e}r, P. \v{S}emrl,
{\it Isometries of Grassmann spaces,}
J. Funct. Anal. 270 (2016), 1585--1601.


\bibitem{GS2}
G. Geh\'{e}r, P. \v{S}emrl,
{\it Isometries of Grassmann spaces, II, }
Adv. Math 332 (2018), 287--310.


   \bibitem{Giles}
   J. Giles,
   {\it Classes of semi-inner-product spaces,}
   Trans. Amer. Math. Soc. 129 (1967), 436--446.

\bibitem{Hamhalter}
J. Hamhalter,
{\it Quantum Measure Theory,}
Springer Science, Business Media Dordrecht, 2003.


\bibitem{HR98}
F. Hern\'{a}ndez, B. Rodriguez-Salinas,
{\it Lattice-embedding scales of $L^p$ spaces into Orlicz spaces,}
Israel J. Math. 104 (1998), 191--220.



\bibitem{Huang}
J. Huang,
{\it Derivations with values into ideals of a semifinite von Neumann algebra},
Ph.D. thesis, University of New South Wales, 2019.



\bibitem{HSS}
J. Huang, O. Sadovskaya, F. Sukochev,
{\it On Arazy's problem concerning isomorphic embeddings of ideals of compact operators,}
Adv. Math.   406 (2022), Paper No. 108530, 21 pp.



\bibitem{HSZ}
J. Huang, F. Sukochev, D. Zanin,
{\it Logarithmic submajorization and order-preserving isometries, }
J. Funct. Anal. 278  (2020), Pater No. 108352, 44 pp.


\bibitem{JL}
J. Jamison, I. Loomis,
{\it Isometries of Orlicz spaes of vector valued functions,}
Math. Z. 193 (1986), 363--371.


\bibitem{JKL}
J. Jamison, A. Kaminska, P.-K. Lin,
{\it Isometries of Musielak--Orlicz spaces II},
Studia Math. 104 (1993), 75--89.




\bibitem{JC}
P. de Jager, J. Conradie,
{\it Isometries between non-commutative symmetric spaces associated with semi-finite von Neumann algebras,}
Positivity 24 (2020), 815--835.

\bibitem{JC2}
P. de Jager, J. Conradie,
{\it Extreme point methods in the study of isometries on certain noncommutative spaces},
 Glasg. Math. J. 64 (2022), no. 2, 462--483.


\bibitem{JRS}
M. Junge, Z. Ruan, D. Sherman,
{\it A classification for $2$-isometries of noncommutative $L_p$-spaces},
Israel J. Math. 150 (2005), 285--314.

\bibitem{JS}
M. Junge, D. Sherman,
{\it Noncommutative $L^p$ modules,}
J. Operator  Theory 53 (2005), 3--34.




\bibitem{K51}
R. Kadison,
{\it Isometries of operator algebras,}
Ann. of Math. 54 (1951), 325--338.

\bibitem{KR}
R.
Kadison, J.  Ringrose, {\it Fundamentals of the theory of operator algebras. I},
Pure and Applied Mathematics, 100. Academic Press, Inc., New York, 1983.



\bibitem{Kalton_R93}
N. Kalton, B. Randrianantoanina,
{\it Isometries on rearrangement-invariant spaces,}
C. R. Acad. Sci. Paris Sr. I Math. 316 (1993), 351--355.

\bibitem{Kalton_R}
N. Kalton, B. Randrianantoanina,
{\it Surjective isometries on rearrangement-invariant spaces,}
Quart. J. Math. Oxford (2)   45 (1994), 301--327.



\bibitem{Kalton_S}
N. Kalton, F. Sukochev,
\textit{Symmetric norms and spaces of operators},
J. Reine Angew. Math.  621 (2008), 81--121.

\bibitem{Katavolos2}
A. Katavolos,
{\it Are non-commutative $L_p$ spaces really non-commutative?}
Canad. J. Math.   6 (1976), 1319--1327.




\bibitem{KPS}
S. Krein, Y. Petunin, E. Semenov,
{\it Interpolation of linear operators.} Trans. Math. Mon.,  54, AMS, Providence, 1982.

\bibitem{KM}
A. Krygin, A. Medzhitov,
{\it Isometries of Loretnz spaces,}
Math. Anal. Algebra Prob. Theory, Collect. Sci. Works, Tashkent, 52--62 (1988).

\bibitem{Lamperti}
J. Lamperti,
{\it On the isometries of certain function spaces,}
Pacific J. Math.  8 (1958), 459--466.

\bibitem{LPS}
C.-K.  Li, Y.-T.  Poon, N.-S.  Sze,
Isometries for Ky Fan norms between matrix spaces,
Proc. Amer. Math. Soc. 133 (2004), 369--377.

\bibitem{LT1}  J. Lindenstrauss, L.  Tzafriri,
 {\it Classical Banach spaces. I. Sequence spaces.}
 Ergebnisse der Mathematik und ihrer Grenzgebiete, Vol. 92. Springer-Verlag, Berlin-New York, 1977.


\bibitem{LSZ}
S. Lord, F. Sukochev, D. Zanin,
 {\it Singular traces: Theory and applications,}
  De Gruyter Studies in Mathematics,
   46. De Gruyter, Berlin, 2013.

\bibitem{Lumer}
G. Lumer,
{\it Semi-inner-product spaces},
Trans. Amer. Math. Soc. 100 (1961), 29--43.

\bibitem{Medzhitov}
A. Medzhitov,
{\it Isometries of symmetric spaces on atomic von Neumann algebras,}
Mathematical analysis, algebra and Geometry,   Tashkent State Univ., 52--54 (1989).

\bibitem{MS}
A. Medzhitov, F. Sukochev,
{\it Positive isometries of noncommutative symmetric spaces,} Izv. Akad. Nauk UzSSR Ser. Fiz.-Mat. Nauk 3 (1987), 20--25 (in Russian).

 \bibitem{MS2}
A. Medzhitov, F. Sukochev, {\it Isometries of non-commutative Lorentz spaces,} Dokl. Acad. Nauk UzSSR 4 (1988), 11--12 (in Russian).


\bibitem{MZ}
C. le Merdy, S. Zadeh,
{\it $\ell^1$-contractive maps on noncommutative $L^p$-spaces,}
J. Operator  Theory (2020), in press.

\bibitem{Mori}
M. Mori,
{\it Isometries between projection lattices of von Neumann algebras,}
J. Funct. Anal. 276 (2019), 3511--3528.

\bibitem{Nelson}
E. Nelson,
{\it Notes on non-commutative integration,}
J. Funct. Anal.  15 (1974), 103--116.



\bibitem{O}
V. Ov\v{c}innikov,
{\it Symmetric spaces of measurable operators},
Dok. Adad. Nauk SSSR 191 (1970), 448--451. (Russian)

\bibitem{O2}
V. Ov\v{c}innikov,
{\it Symmetric spaces of measurable operators},
 Trudy Inst. Math. WGU 3 (1971), 88--107. (Russian)




\bibitem{Potepun}
A. V. Potepun,
{\it Hilbertian symmetric spaces,}
Izv. Vyssh. Uchevn. Zaved. Math. 1 (1974), 90--95.
Translated in: Soviet Math. (Iz. VUZ) 18 (1974),  73--77.

\bibitem{R}
B. Randrianantoanina,
{\it Isometric  classification of norms in rearrangement-invariant function spaces,}
Comment. Math. Univ. Carolinae 38 (1997), 73--90.


\bibitem{R2}
B. Randrianantoanina,
{\it Contractive projections and isometries in sequence spaces},
Rocky Mountain J. Math. 28 (1998), 323--340.

\bibitem{R3}
B. Randrianantoanina,
{\it Injective isometries in Orlicz spaces,}
Function spaces (Edwardsville, IL, 1998), 269--287,
Contemp. Math., 232, Amer. Math. Soc., Providence, RI, 1999.

\bibitem{RR}
A. Rieckers, H. Roos,
{\it Implementation of Jordan-isomorphisms for general von Neumann algebras,}
Ann. Inst. H. Poincar\'{e} Phys. Th\'{e}or. 50 (1989), 95--113.


\bibitem{Russo}
B. Russo,
{\it Isometrics of $L_p$-spaces associated with finite von Neumann algebras,}
Bull. Amer. Math. Soc. 74 (1968),  228--232.


\bibitem{Sakai} S. Sakai, {\it Derivations of $W^*$-algebras,} Ann. of Math. 83 (1966), 273--279.

\bibitem{Sakai71}
S. Sakai, {\it $C^*$-algebras and $W^*$-algebras,}
Springer-Verlag, Berlin--Heidelberg--NewYork, 1971.


\bibitem{ST}
H. Schneider, R. Turner,
{\it Matrices hermitian for an abosulte norm,}
Linear Multilinear Algebra
 1 (1973), 9--31.



\bibitem{Se}
I. Segal,
{\it A non-commutative extension of abstract integration},
Ann. of  Math. 53 (1953), 401--457.

\bibitem{SS14}
E. Semenov, F. Sukochev,
{\it Sums and intersections of symmetric operator spaces},
J. Math. Anal. Appl. 414 (2014), 742--755.


\bibitem{SSU}
E. Semenov, F. Sukochev, A. Usachev,
{\it Geometry of Banach limits and their appications,}
Russian Math. Surveys 75:4 (2020), 725--763.

\bibitem{Sherman}
D. Sherman,
{\it Noncommutative $L^p$ structure encodes exactly Jordan structure,}
J. Funct. Anal. 221 (2005), 150--166.


\bibitem{Sinclair}
A. Sinclair,
{\it Jordan homomorphisms and derivations on semisimple Banach algebras,}
Proc. Amer. Math. Soc. 24 (1970), 209--214.

\bibitem{Sourour77}
A. Sourour,
{\it On the isometries of $L^p(\Omega, X)$},
Bull. Amer. Math. Soc. 83 (1977), 129--130.

\bibitem{Sourour78}
A. Sourour,
{\it The isometries of $L^p(\Omega, X)$},
J. Funct. Anal. 30 (1978), 276--285.

 \bibitem{Sourour}
 A. Sourour, {\it Isometries of norm ideals of compact operators,}
 J. Funct. Anal. 43 (1981), 69--77.


\bibitem{S87}
F. Sukochev,
{\it Symmetric spaces of measurable operators on finite von Neumann algebras,}
Ph.D. Thesis (Russian), Tashkent 1987.



\bibitem{Sukochev}
F. Sukochev,
{\it Isometries of symmetric operator spaces associated with AFD factors of type $II$ and symmetric vector-valued spaces,}
Integral Equ. Oper. Theory 26 (1996), 102--124.

\bibitem{SV}
F. Sukochev,
A. Veksler,
{\it Positive linear isometries in symmetric operator spaces,}
Integral Equ. Oper. Theory  (2018), 90:58 1--15.

\bibitem{Tak}
M. Takesaki,
{\it Theory of operator algebras I},
 Springer-Verlag, New York, 1979.

\bibitem{T3}
M. Takesaki,
{\it Theory of operator algebras III},
 Springer-Verlag, New York, 2003.
\bibitem{Tam}
P. Tam,
{\it Isometries of $L_p$-spaces associated with semifinite von Neumann algebras,}
Trans. Amer. Math. Soc. 254 (1979), 339--354.

\bibitem{Watanabe}
K. Watanabe,
{\it On isometries between noncommutative $L^p$-spaces associated with arbitrary von Neumann algebra,}
J. Operator Theory 28 (1992), 267--279.

\bibitem{XY}
Y. Xu, Q. Ye,
{\it Generalized Mercer kernels and reproducing kernel Banach spaces,}
 Mem. Amer. Math. Soc. 258 (2019), no. 1243, vi+122 pp.

\bibitem{Yeadon}
F. Yeadon,
{\it Isometries of non-commutative $L^p$ spaces,}
Math. Proc. Cambridge Philos. Soc. 90 (1981), 41--50

\bibitem{Z77}
M. Zaidenberg,
{\it On isometric classification of symmetric spaces,}
Soviet Math. Dokl. 18 (1977), 636--640.

\bibitem{Zaidenberg}
M. Zaidenberg,
{\it A representation of isometries on function spaces,}
 Mat. Fiz. Anal. Geom.
4 (1997), 339--347.

\end{thebibliography}

\end{document}